\theoremstyle{plain}
\newtheorem{lemma}{Lemma}[section]
\newtheorem{theorem}[lemma]{Theorem}
\newtheorem{proposition}[lemma]{Proposition}
\newtheorem{corollary}[lemma]{Corollary}
\theoremstyle{definition}
\newtheorem{definition}[lemma]{Definition}
\newtheorem{remark}[lemma]{Remark}
\numberwithin{equation}{section}
\newcommand{\R}{\mathbb{R}}
\newcommand{\HH}{\mathbb{H}}
\newcommand{\bS}{\mathbb{S}}
\newcommand{\N}{\mathbb{N}}
\newcommand{\spt}{\text{\rm spt}}
\newcommand{\C}{\mathbb{C}}
\newcommand{\id}{\mathrm{Id}}
\newcommand{\ve}{\varepsilon}
\newcommand{\MCP}{\mathsf{MCP}}
\newcommand{\Geo}{{\rm Geo}}
\newcommand{\Opt}{{\rm OptGeo}}
\newcommand{\Span}{{\rm Span}}
\newcommand{\weak}{\rightharpoonup}
\newcommand{\M}{\mathcal{M}}
\newcommand{\ee}{{\rm e}}
\newcommand{\Ip}{\mathsf{Ip}}
\newcommand{\Exp}{\text{\rm Exp}}
\renewcommand{\H}{\mathcal{H}}
\renewcommand{\L}{\mathcal{L}}
\newcommand{\sfd}{{\mathsf d}}
\newcommand{\mm}{{\mathsf m}}
\begin{document}
\title{Measure rigidity of Ricci curvature lower bounds}

\author{Fabio Cavalletti and Andrea Mondino}

\address{CRM De Giorgi - Scuola Normale Superiore, Piazza dei Cavalieri 3, I-56126 Pisa (Italy)}
\email{fabio.cavalletti@sns.it}

\address{ETH,  R\"amistrasse 101,  8092 Zurich (Switzerland)}
\email{andrea.mondino@math.ethz.ch}

\keywords{}

\bibliographystyle{plain}

\begin{abstract}
The measure contraction property, $\MCP$ for short, is a weak Ricci curvature lower bound conditions  for metric measure spaces. 
The goal of this paper is to understand which structural properties such assumption  (or even  weaker modifications) implies on the measure, 
on its support and on the geodesics of the space.  

We start our investigation from the euclidean case by proving that if a positive Radon measure $\mm$ over $\R^{d}$ is such that $(\R^{d},|\cdot |, \mm)$ verifies 
a weaker variant of $\MCP$, then  its support $\spt(\mm)$ must be convex and $\mm$ has to be absolutely continuous with respect to 
the relevant Hausdorff measure of $\spt(\mm)$. This result is then used  as a starting point to investigate the rigidity of $\MCP$ in the metric framework. 

We introduce the new notion of \emph{reference measure} for a metric space and prove that if $(X,\sfd,\mm)$ is essentially non-branching and 
verifies $\MCP$, and $\mu$ is an essentially non-branching $\MCP$ reference measure for $(\spt(\mm), \sfd)$, 
then $\mm$ is absolutely continuous with respect to $\mu$, on the set of points where an inversion plan exists.
As a consequence, an essentially non-branching $\MCP$ reference measure enjoys a weak type of uniqueness, up to densities.  
We also prove a stability property for reference measures under measured Gromov-Hausdorff convergence, provided an additional uniform bound holds.

In the final part we present concrete examples of metric spaces with reference measures, both in smooth and non-smooth setting.  
The main example will be the Hausdorff measure over an Alexandrov space. 
Then we prove that the following are reference measures over smooth spaces: 
the volume measure of a Riemannian manifold, the Hausdorff measure of an Alexandrov space with bounded
curvature, and the Haar measure of the subRiemannian Heisenberg group. 
\end{abstract}

\maketitle


\section{Introduction}\label{S:intro}
The notion of curvature for a smooth space, i.e. a Riemannian $n$-dimensional manifold $(M,g)$, is one of most basic geometrical concepts and goes back to the work of Gauss and Riemann; the idea being to consider suitable combinations of second derivatives of the metric $g$. If the space $M$ is non-smooth this approach has few chances to be carried out, so one has to understand what are the fundamental geometric consequences of the curvature and then set these as defining properties.  
\\This was the approach by Alexandrov who, in last century,  defined what it means  for a metric space $(X,\sfd)$ to satisfy an upper or lower bound on the sectional curvature. Such  non-smooth spaces are now called Alexandrov spaces and there is a huge literature about their properties  (we do not even attempt to give a selection of papers; we just recall that Alexandrov, Gromov and  Perelman among others gave major contributions to this theory and we refer to the textbook \cite{BBI} for more  references).
\\

While, as just recalled, the  notion of lower  bound on the sectional curvature makes perfect sense for a metric space $(X,\sfd)$, i.e. a set endowed with a distance function, for defining what it means for a non-smooth space to satisfy a lower Ricci curvature bound one also has to fix a measure $\mm$  on  $(X,\sfd)$, 
thus getting a so called metric measure space $(X,\sfd, \mm)$, m.m.s. for short. In the last ten years there has been a flourishing  of literature about different notions and properties of lower Ricci curvature bounds for m.m.s.: on one hand Bakry-\'Emery-Ledoux  \cite{BakryEmery_diffusions, BakryLedoux} developed the so called Gamma-calculus based on the fact that, roughly speaking, the Bochner inequality characterizes  lower Ricci bounds; 
on the other hand there is a parallel theory via optimal transport based on the idea that one can detect Ricci curvature by examining 
how the volume changes when optimally transported along geodesics.  A third way to study the non-smooth spaces with lower Ricci bounds, having its origins in the work of Gromov and  mainly due to Cheeger-Colding \cite{CC1, CC2, cc:IIIJDE2000} (see also the more recent developments by Colding-Naber \cite{CN}),  is to concentrate the attention on those m.m.s. arising as limits  (in the measured Gromov-Hausdorff sense) of smooth Riemannian manifolds satisfying Ricci curvature lower bounds; such a point of view is very powerful if one is interested in the structure of these limit spaces. \\
\\In this paper we will focus on the second approach via optimal transportation, but let us mention that there is a precise correspondence between the first two (see  \cite{AGS} for the infinite dimensional case and \cite{EKS, AMS} for the finite dimensional one) and there are some recent results on the local structure of such spaces even in the abstract framework (see \cite{GigliSplit,GMR, MN}).  
\\

The weakest Ricci curvature condition for a m.m.s. is the so called measure contraction property, $\MCP$ for short, which keeps track of the distortion of the volume of a set  when it is transported to a Dirac delta. The quantitative formalization of having Ricci bounded from below by $K\in \R$ and dimension bounded from above by $N>1$ via this approach, the so called $\MCP(K,N)$ condition, is due to  Ohta \cite{Ohta07} and has its roots in earlier works by Grigor'yan \cite{Gri92},  Sturm \cite{Sturm98} and Kuwae-Shioya \cite{KS01}.  Such condition is compatible with the smooth counterpart (i.e. 
a smooth Riemannian manifold of dimension $N$ has Ricci curvature bounded below by $K$ if and only if it satisfies $\MCP(K,N)$) and is stable under pointed  measured Gromov-Hausdorff convergence,  $pmGH$ for short, so that the limit spaces of Cheeger-Colding are included in this  theory. Also finite dimensional Alexandrov spaces with lower curvature bounds endowed with the corresponding Hausdorff measure satisfy the measure contraction property \cite{Ohta07} (see also \cite{KS10}  for a subsequent independent proof) as well as the Heisenberg group endowed with the Haar measure and the Carnot-Carath\'eodory distance \cite{Juillet}. 

Stronger notions of ``Ricci bounded  from below by $K\in \R$ and dimension bounded above by $N\in (1, \infty]$''  for m.m.s.  are the so called ${\sf CD}(K,N)$-conditions introduced independently  by Lott-Villani \cite{LV} and by Sturm \cite{Sturm06I, Sturm06II},  and  the even stronger notions of ${\sf RCD}(K,\infty)$  and ${\sf RCD}^*(K,N)$-spaces (the first were introduced in \cite{AGS11b} and further investigated in \cite{AGMR, AGS}, for the  second ones see  \cite{AMS, EKS, GigliSplit, GMR, MN, RS}). All these classes include the smooth Riemannian manifolds satisfying the corresponding curvature-dimension bounds and  their $pmGH$-limits, as well as finite dimensional Alexandrov spaces with curvature bounded from below. On the other hand  let us recall  that the Heisenberg group does not satisfy any ${\sf CD}(K,N)$ condition  but it does verify $\MCP$, see \cite{Juillet}. So $\MCP$ is a strictly weaker notion of curvature than ${\sf CD}$.
\\

The goal of the present paper is to investigate the structural properties (in particular on the measure, on its support, and on the geodesics)  forced by $\MCP$.  
\\  In order to introduce the problem let us start by analyzing the behavior of the euclidean space $\R^d$, endowed with the   $d$-dimensional Lebesgue measure  $\mathcal{L}^{d}$ and the  Euclidean distance $|\cdot|$.    It is almost trivial to check that the triple $(\R^{d}, |\cdot |, \mathcal{L}^{d} )$ verifies the measure contraction property with curvature $K=0$  and dimension $N = d$,  i.e.  $\MCP(0,d)$. Moreover it is well known that,  at the price of changing the lower bound on the curvature and the upper bound on the dimension,   in the triple $(\R^{d}, |\cdot |, \mathcal{L}^{d} )$ it is possible to replace the Lebesgue measure with some equivalent measure and still obtain $\MCP$, provided the density verifies some concavity estimates. 
\\A first goal of this work is to investigate, and give affermative answer to, the reverse question: 
\begin{itemize}
\item Let $\mm$ be a positive Radon measure on $\R^d$  such that the triple $(\R^{d}, |\cdot|, \mm)$ verifies  $\MCP$ (or even a weaker condition). Can we deduce that $\mm$ is absolutely continuous with respect to $\L^{k}$, for some $k\leq d$? 
\item Moreover, does the support of $\mm$ have some nice geometric properties?
\end{itemize}
The next theorem contains an affirmative answer to these natural questions; before stating it  let us remark that the   \emph{non-degeneracy condition} for a positive Radon  measure $\mm$ is given in Definition \ref{D:nondegenerate} and  it is ensured by the measure contraction property (it is indeed much weaker since no quantitative or uniform lower bound on the transported measure is assumed).

\begin{theorem}\label{T:1}
Let $\mm$ be a positive Radon  measure over $\R^{d}$ and denote with $\Omega$ its support. 
If the  metric measure space $(\Omega,|\cdot|, \mm)$ verifies the \emph{non-degeneracy condition} then
there exists a natural number $k \leq d$ so that: 
\begin{itemize}
\item  $\Omega$ is convex and contained in a $k$-dimensional affine subspace of $\R^{d}$, say $V^{k}$;
\item the measure $\mm$ is absolutely continuous with respect to $\mathcal{L}^{k}\llcorner {V^{k}}$.
\end{itemize}
\end{theorem}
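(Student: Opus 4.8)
The plan is to exploit the non-degeneracy condition to produce, for a suitably chosen base point, a "contraction map" toward that point whose Jacobian controls how $\mm$ is pushed forward, and to extract from this both the convexity of $\Omega$ and the absolute continuity. First I would fix a point $x_0$ in the support $\Omega$ and a small ball $B$ around another point of $\Omega$; the non-degeneracy condition provides, for every $t\in(0,1)$, a measurable map $T_t$ sending (a full-measure subset of) $B$ along segments toward $x_0$, with $(T_t)_\sharp (\mm\llcorner B)$ absolutely continuous with respect to $\mm$ and with a quantitative-free lower bound replaced by mere non-triviality. The geometric input is that these maps are the restrictions of the Euclidean homotheties $z\mapsto x_0 + (1-t)(z-x_0)$, so their images are genuine "shrunk" copies of $B$; since the pushforward must be carried by $\spt(\mm)=\Omega$, every such shrunk copy lies in $\Omega$, and letting $B$ and $x_0$ vary over $\Omega$ forces $\Omega$ to contain every segment between two of its points up to the relevant dimensional subspace — i.e. $\Omega$ is convex and spans an affine subspace $V^k$.

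Next I would identify $k$ as the dimension of the affine hull of $\Omega$ and, after an affine change of coordinates, reduce to the case $\Omega\subseteq\R^k$ with nonempty interior (relative to $V^k$). The absolute continuity of $\mm$ with respect to $\L^k\llcorner V^k$ I would obtain by a differentiation / covering argument: suppose $\L^k(E)=0$ for some Borel $E\subseteq\Omega$ but $\mm(E)>0$. Using the homothety maps $T_t$ centered at a density point $x_0$ of $\mm$, one transports a fixed chunk of positive $\mm$-mass into arbitrarily small neighborhoods of $x_0$, and the key point is that the Euclidean homothety has Jacobian $(1-t)^k$, so it scales $\L^k$ in a controlled way; combining this with the non-degeneracy comparison $(T_t)_\sharp\mm \ll \mm$ and iterating (or taking $t\to 1$) forces a contradiction with the assumed $\mm$-positivity of an $\L^k$-null set, because a null set cannot "absorb" mass that the contraction keeps spreading over sets of positive $\L^k$-measure. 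Concretely I would run this as a Vitali-type argument: cover $E$ by small balls on which $\mm$ is comparable (up to the non-degeneracy constants) to its image under a homothety, and let the radii go to zero to see $\mm(E)\le C\,\L^k(E)=0$.

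The main obstacle, I expect, is precisely that the non-degeneracy condition gives only a \emph{qualitative} comparison $(T_t)_\sharp\mm\ll\mm$ (or the even weaker statement that the transported measure does not charge $\mm$-null sets), with no uniform density bounds, so one cannot directly write a clean inequality $\mm(T_t(A))\gtrsim (1-t)^k\mm(A)$. Handling this requires either a careful use of Lebesgue differentiation adapted to the (a priori unknown) structure of $\mm$, or an exhaustion argument isolating subsets where the comparison densities are bounded, and then a limiting procedure. A secondary technical point is measurability and the choice of a common full-$\mm$-measure set on which all the maps $T_t$, $t\in\Q\cap(0,1)$, simultaneously behave well; this is routine but must be done before taking limits in $t$. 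Once these are in place, the convexity of $\Omega$ (hence the identification of $V^k$ and $k\le d$) and the absolute continuity follow together, since the dimensional count $k$ is exactly the Hausdorff/Lebesgue dimension forced by the fact that $\mm$ is non-degenerate on an open subset of $V^k$.
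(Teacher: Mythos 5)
Your convexity argument is fine and is essentially the paper's Lemma \ref{L:convex}: the homothety image of a small ball around $x$ centred at $y$ is contained in a shrunken ball around $(1-t)x+ty$, and non-degeneracy forces that shrunken ball to carry $\mm$-mass, so the midpoints lie in $\spt(\mm)$.

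The absolute continuity step, however, has a genuine gap that you flag yourself but do not close. Your Vitali-type scheme needs an inequality of the form $\mm(E)\le C\,\L^{k}(E)$, i.e.\ a \emph{quantitative} comparison between $\mm$ and its homothety images, but the non-degeneracy condition supplies none: it only says that $\mm(A)>0$ implies $\mm(A_{t,o})>0$, equivalently $(T_{t})_{\sharp}\mm\ll\mm$, with no control whatsoever on the Radon--Nikodym density. There are no ``non-degeneracy constants'' to put into a covering argument, and neither of your suggested fixes works: an exhaustion by sets where ``comparison densities are bounded'' presupposes densities relating $\mm$ to $\L^k$, which is precisely what is to be proved, and Lebesgue differentiation with respect to $\mm$ alone never connects to $\L^{k}$. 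Note also that contracting toward a \emph{single} centre $o$ cannot give a contradiction: if $\L^{k}(E)=0$ then $\L^{k}(E_{t,o})=0$ for every $t$, and nothing prevents $\mm$ from charging all these null sets (they may overlap heavily, e.g.\ when $E$ lies on a ray through $o$). The paper's resolution is different in kind: assuming $\mm(K)>0$ and $\L^{k}(K)=0$ with $K\subset B_{\delta}(x_{0})$, it integrates the non-degeneracy inequality over a full $k$-dimensional family of contractions, namely all centres $o\in\partial B_{\delta}(x_{0})$ and all times $t\in[0,1)$, obtaining
\[
0<\int_{[0,1]}\int_{\partial B_{\delta}(x_{0})}\mm(K_{t,o})\,\H^{k-1}(do)\,\L^{1}(dt),
\]
then applies Fubini to rewrite this as $\int_{B_{\delta}(x_{0})}(f_{z})_{\sharp}\bigl(\H^{k-1}\otimes\L^{1}\bigr)(K)\,\mm(dz)$ with $f_{z}(o,t)=(z-to)/(1-t)$, and finally checks by an explicit determinant computation that $f_{z}$ has nonvanishing Jacobian, so $\L^{k}(K)=0$ forces each inner pushforward to vanish --- a contradiction. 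This change of the roles of the variables (fixing the target $z$ and letting the contraction parameters $(o,t)$ sweep a set of positive $\L^{k}$-measure) is the missing idea; without it, or some substitute for it, your outline does not yield the conclusion. A minor further point: you would also still need the paper's separate short argument showing that the relative boundary of $\Omega$ is $\mm$-null before concluding.
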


The same question can be reformulated in the metric framework only once the choice of a favorite measure is done. 
More precisely, given a m.m.s. $(X,\sfd,\mu)$  satisfying $\MCP$ and maybe some other structural assumption,  one can  ask:
\begin{itemize}
\item  Is the support of $\mu$ convex? 
\item  If $\mm$ is a positive Radon measure on $X$  
also satisfying $\MCP$, can we deduce that $\mm$ is absolutely continuous with respect to $\mu$? 
\end{itemize}
This kind of properties were proved by Cheeger-Colding \cite{cc:IIIJDE2000} in the framework of $pmGH$-limits of Riemannian manifolds satisfying lower Ricci curvature bounds: more precisely they showed that in the limit space the ``favorite measure'' is the Hausdorff measure of the relevant dimension, and any other possible limit measure has to be absolutely continuous with respect to it. Therefore this paper can be seen as an intrinsic-non smooth analogue of the Cheeger-Colding result.
  
Regarding the first question, in Proposition \ref{Prop:d-convex}, we will prove the affirmative answer: 
the support of any Radon measure satisfying $	\MCP$  has to be weakly convex 
(more precisely the support of any measure satisfying the strong qualitative $\MCP$ condition, defined in \eqref{eq:StrongQualMCP}, 
must be weakly convex; i.e. every couple of points of $\spt(\mm)$ is joined by a geodesic entirely contained in  $\spt(\mm)$).  
In order to solve the second problem, we will give a precise meaning of what is for us a ``favorite measure'': 
this is what we call \emph{reference measure} (see Definition \ref{D:referencemeasure}) for a complete and separable metric space $(X,\sfd)$. 
The crucial property being to behave nicely under geodesic inversion with respect to almost every point of the space.

\begin{definition}[Reference measure]
A positive Radon measure  $\mu \in \M^+(X)$ is a \emph{reference measure for $(X,\sfd)$} provided it is non-zero, and
for $\mu$-a.e. $z \in X$ there exists $\pi^{z} \in \mathcal{M}^+(X \times X)$ so that 
\[
(P_{1})_{\sharp}\,\pi^{z} = \mu, \qquad \pi^{z}  (X\times X \setminus H(z))=0,  \qquad (P_{2})_{\sharp}\, \pi^{z} \ll \mu, 
\]
where $P_{i} : X\times X \to X$ is the projection on the $i$-th component, for $i =1,2$ and 
\[
H(z): = \{(x,y)\in X\times X \,:\, \sfd(x,y)=\sfd(x,z)+\sfd(z,y)\}.
\]
The measure $\pi^{z}$ will be called  \emph{inversion plan} and those points where an inversion plan exists will be called \emph{inversion points} and denoted by $\mathsf{Ip}(\mu)$.
\end{definition}

Let us stress that this concept is completely new, to best of our knowledge. In Remark \ref{rem:Inversion} we will also 
point out that existence of an inversion plan in a point $z$ is closely related to the regularity of the ambient space $(X,\sfd)$ at $z$. 
In particular if a conical singularity happens in $z$ then no inversion plan at $z$ exists.

Another ingredient in the next theorem will be the essentially non-branching condition (recalled in Section \ref{Ss:essentially}): it 
is an important structural assumption on a m.m.s. and it is fulfilled by a large class of geometrically relevant examples, for instance Riemannian manifolds, 
Alexandrov spaces with lower curvature bounds, $pmGH$-limits of Riemannian manifolds with lower Ricci curvature bounds, 
metric measure spaces verifying ${\sf RCD}(K,\infty)$ or ${\sf RCD}^{*}(K,N)$,
the Heisenberg Group endowed with the Carnot-Carath\'eodory metric, etc.. 
Moreover, since in the proof of the next result we do not need the quantitative controls assumed in $\MCP(K,N)$, 
we just assume a weaker  qualitative $\MCP$ condition (for the precise notion see Definition \ref{D:qualitative}).

\begin{theorem}\label{T:2}
Let $(X,\sfd,\mm)$ be an essentially non-branching m.m.s. that verifies the qualitative $\MCP$ condition  \eqref{eq:QualMCP}.
Assume the existence of a reference measure $\mu$ for $(\Omega,\sfd)$, where $\Omega= \spt (\mm)$,
so that $(\Omega, \sfd, \mu)$ verifies the qualitative $\MCP$  condition  \eqref{eq:QualMCP} and it is essentially non-branching. \\ 
\noindent
If $\mm (\Omega \setminus \mathsf{Ip}(\mu)) = 0$, then
$$
\mm \ll \mu.
$$
In particular, if also $\mm$ is a reference measure and $\spt(\mm)=\spt(\mu)$ with $\mu(\Omega \setminus \mathsf{Ip}(\mm)) = 0$, 
then $\mu$ and $\mm$ are equivalent measures, i.e.
$$
\mu \ll \mm \quad \textrm{ and } \quad \mm \ll \mu. 
$$
In other words, once the support is fixed,  a reference measure satisfying $\MCP$ is uniquely determined up to densities.
\end{theorem}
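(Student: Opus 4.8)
The plan is to reduce the absolute continuity statement $\mm \ll \mu$ to a disintegration argument along geodesics emanating from a fixed inversion point, exploiting the essentially non-branching hypothesis to make the disintegration one-dimensional and well-behaved. First I would fix a reference measure $\mu$ for $(\Omega,\sfd)$ and pick, for $\mu$-a.e.\ inversion point $z \in \Ip(\mu)$, the associated inversion plan $\pi^{z}$. The key geometric input is that $\pi^z$ is concentrated on $H(z)$, i.e.\ on pairs $(x,y)$ for which $z$ lies on a geodesic from $x$ to $y$; combined with essential non-branching of $(\Omega,\sfd,\mu)$, the plan $\pi^z$ is induced by a map (away from a $\mu$-null set), so that geodesic inversion through $z$ is essentially single-valued. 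I would then use the qualitative $\MCP$ condition \eqref{eq:QualMCP} for $(\Omega,\sfd,\mu)$ — the contraction of $\mu$ towards the Dirac mass at $z$ — to get that the inverted/contracted image of $\mu$ is again absolutely continuous with respect to $\mu$, which is exactly the role the hypothesis $(P_2)_\sharp \pi^z \ll \mu$ plays.

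Next I would bring in $\mm$. Since $(X,\sfd,\mm)$ verifies qualitative $\MCP$ and is essentially non-branching, the measure $\mm$ disintegrates along the geodesics used in the $\MCP$-contraction towards $z$, with one-dimensional conditional measures that satisfy a Bishop–Gromov-type monotonicity/density estimate inherited from $\MCP$. The crucial step is to run this disintegration for $\mm$ and for $\mu$ simultaneously over the \emph{same} geodesic structure pointing at $z$ — this is legitimate because $\spt(\mm)=\Omega=\spt(\mu)$ and because the $\MCP$ contraction towards $z$ only depends on the metric $\sfd$, not on the measure. On each ray from $z$ the conditional of $\mm$ and the conditional of $\mu$ are both nontrivial (as $z$ is reachable and densities are locally bounded below by $\MCP$), so the conditional of $\mm$ is absolutely continuous with respect to the conditional of $\mu$ on that ray; integrating this over the quotient, and using that the quotient measure of $\mm$ is controlled by the quotient measure of $\mu$ via the inversion plan $\pi^z$ (here $(P_1)_\sharp \pi^z = \mu$ and $(P_2)_\sharp \pi^z \ll \mu$ are what transfer null sets correctly), yields $\mm\llcorner B \ll \mu$ on the set $B$ of points connected to $z$ by the contraction geodesics. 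Finally, since $\mm(\Omega\setminus\Ip(\mu))=0$, one can cover $\mm$-almost all of $\Omega$ by countably many such $z$'s (a Vitali/Lindelöf-type covering of $\Ip(\mu)$) and conclude $\mm \ll \mu$.

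For the ``in particular'' part: once $\mm\ll\mu$ is established, apply the first part with the roles of $\mm$ and $\mu$ exchanged — this is legitimate precisely because the hypotheses are now symmetric, namely $\mm$ is also a reference measure, $\spt(\mm)=\spt(\mu)$, and $\mu(\Omega\setminus\Ip(\mm))=0$, while both triples are essentially non-branching and verify \eqref{eq:QualMCP}. That gives $\mu\ll\mm$, hence equivalence.

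The main obstacle I expect is making the two disintegrations genuinely compatible: one must ensure that the geodesic rays used in the $\MCP$-contraction of $\mm$ towards $z$ can be used for $\mu$ as well, and that essential non-branching really forces the inversion plan $\pi^z$ to be a map up to the relevant null sets, so that the one-dimensional conditionals of the two measures live over the same quotient space and the comparison of conditionals is meaningful. Handling the $\mu$-null set of non-inversion points and patching the local statements into a global one via countable covers is routine by comparison, but the measurable-selection/disintegration bookkeeping (choosing $z \mapsto \pi^z$ measurably, and the geodesic selection) is where the care is needed.
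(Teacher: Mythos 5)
Your second half — disintegrating along the rays through an inversion point $z$, noting that the one-dimensional conditionals of $\mu$ are equivalent to $\H^1$, and using the inversion plan to deal with an exceptional null set of rays — is indeed how the paper's proof (Theorem \ref{T:metric}) concludes. But the step you rely on to start the comparison, namely that ``the quotient measure of $\mm$ is controlled by the quotient measure of $\mu$ via the inversion plan $\pi^z$'', is a genuine gap and is essentially circular. The inversion plan involves only $\mu$: it transfers $\mu$-nullity of \emph{saturated} sets (unions of rays through $z$) from one side of $z$ to the other, and on the quotient space of rays it acts as the identity, so it cannot produce any relation between the quotient measure of $\mm$ and that of $\mu$. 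Proving that the $\mm$-quotient is absolutely continuous with respect to the $\mu$-quotient for the ray decomposition through a fixed $z$ is as strong as the theorem itself restricted to saturated sets, so it cannot be assumed. (A minor additional point: essential non-branching does not force $\pi^z$ to be induced by a map, and the paper never needs this; it only uses that sets of the form $(X\times Q^{-1}(B))\cap H(z)\cap \mathcal{T}^{-1}_{nb}$ are product-like because $Q^{-1}(B)$ is saturated.)

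The missing idea is the only place in the paper where $\mm$ and $\mu$ actually interact: a contradiction-plus-Fubini argument. Assume there is $A\subset B_r(o)$ with $\mm(A)>0$ and $\mu(A)=0$. The qualitative $\MCP$ of $\mm$ gives $\mm(A_{1/2,x}\cap\mathcal{T}_{nb}(x))>0$ for every $x$; integrating this against $\mu(dx)$ over $B_r(o)$ and applying Fubini shows that for a positive $\mm$-measure set of midpoints $z$ — hence, by the hypothesis $\mm(\Omega\setminus\Ip(\mu))=0$, for some $z\in\Ip(\mu)$ — the set $E(z)$ of points $x$ antipodal through $z$ to some point of $A$ satisfies $\mu(E(z))>0$. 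The contradiction is then reached by showing $\mu(E(z))=0$: since $\mu(A)=0$ and $\mu^\alpha\sim\H^1\llcorner_{\gamma^\alpha}$, the set $A$ is $\H^1$-null on $q$-a.e.\ ray, hence so is its reflection through $z$, which kills the part of $E(z)$ lying on good rays; the $q$-null family of bad rays is the part handled by the inversion plan. Without this averaging over midpoints $z$ weighted by $\mm$, your argument never uses the $\MCP$ of $\mm$ in a way that connects it to $\mu$, and the conclusion does not follow. The ``in particular'' part of your proposal (exchanging the roles of $\mm$ and $\mu$) is correct and is exactly what the paper does.
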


Once observed that metric measure spaces verifying ${\sf RCD}$ are all essentially non-branching,
Theorem \ref{T:2} can be used straightforwardly to obtain the next uniqueness result. 

\begin{theorem}\label{T:RCD}
Let $(X,\sfd,\mu)$ be a m.m.s. that verifies ${\sf RCD}^{*}(K,N)$, with $N < \infty$ and $X = \spt (\mu)$. Assume that $\mu$ is a reference measure for $(X,\sfd)$.
If $(X,\sfd,\mm)$ is a m.m.s. verifying ${\sf RCD}^{*}(K',N')$, possibly for different $K'$ and $N'$, with $X = \spt (\mm)$ and  $\mm (X \setminus \mathsf{Ip}(\mu)) = 0$, then 
\[
\mm \ll \mu.
\]
In particular, if also $\mm$ is a reference measure with $\mu(X \setminus \mathsf{Ip}(\mm)) = 0$,
then $\mu$ and $\mm$ are equivalent measures, i.e. $\mu \ll \mm$ and $\mm \ll \mu$.
\end{theorem}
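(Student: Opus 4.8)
The plan is to obtain Theorem \ref{T:RCD} as an immediate consequence of Theorem \ref{T:2}; the only point is to verify that an ${\sf RCD}^{*}(K,N)$-space with $N<\infty$ fits into the hypotheses required there. Two standard structural facts are needed. First, every metric measure space satisfying ${\sf RCD}^{*}(K,N)$ is essentially non-branching: this follows from the Rajala--Sturm theorem that ${\sf RCD}(K,\infty)$-spaces are essentially non-branching, together with the fact that ${\sf RCD}^{*}(K,N)$ implies ${\sf RCD}(K',\infty)$ for a suitable $K'$. Second, for $N<\infty$ an ${\sf RCD}^{*}(K,N)$-space verifies $\MCP(K,N)$ --- via the implications ${\sf RCD}^{*}(K,N)\Rightarrow {\sf CD}^{*}(K,N)$ and, under the essentially non-branching assumption just established, ${\sf CD}^{*}(K,N)\Rightarrow \MCP(K,N)$ (Cavalletti--Sturm) --- and in particular it verifies the weaker qualitative condition \eqref{eq:QualMCP} of Definition \ref{D:qualitative}.

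Granting these, I would apply Theorem \ref{T:2} with $(X,\sfd,\mm)$ in the role of the essentially non-branching $\MCP$-space and $\mu$ in the role of the reference measure. All the hypotheses are then immediate: $(X,\sfd,\mm)$ is essentially non-branching and satisfies \eqref{eq:QualMCP} by the above; since $\Omega:=\spt(\mm)=X$, the assumed reference measure $\mu$ for $(X,\sfd)$ is a reference measure for $(\Omega,\sfd)$; the space $(\Omega,\sfd,\mu)=(X,\sfd,\mu)$ is essentially non-branching and satisfies \eqref{eq:QualMCP}; and $\mm(\Omega\setminus\mathsf{Ip}(\mu))=\mm(X\setminus\mathsf{Ip}(\mu))=0$ by hypothesis. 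Theorem \ref{T:2} therefore yields $\mm\ll\mu$.

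For the ``in particular'' part, assume in addition that $\mm$ is itself a reference measure for $(X,\sfd)$ and that $\mu(X\setminus\mathsf{Ip}(\mm))=0$. Because $\spt(\mm)=\spt(\mu)=X$, the whole argument is symmetric under interchanging $\mu$ and $\mm$: running it with the roles swapped --- now using that $(X,\sfd,\mu)$ is essentially non-branching and satisfies \eqref{eq:QualMCP} --- yields $\mu\ll\mm$, and together with $\mm\ll\mu$ this says that $\mu$ and $\mm$ are equivalent.

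I do not expect any genuine obstacle: the mathematical substance is entirely contained in Theorem \ref{T:2}. The only mild care needed is bibliographical, namely citing the correct sources for ``${\sf RCD}^{*}\Rightarrow$ essentially non-branching'' and ``${\sf RCD}^{*}(K,N)\Rightarrow\MCP(K,N)$'', and being slightly attentive to the dimensional parameters, since the $\MCP$ implication requires $N<\infty$ (and likewise $N'<\infty$ for the space $(X,\sfd,\mm)$, which is implicit in the notation ${\sf RCD}^{*}(K',N')$).
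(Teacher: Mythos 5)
Your proposal is correct and follows the same route as the paper, which likewise obtains this result as a direct corollary of Theorem \ref{T:2} after observing that ${\sf RCD}^{*}$-spaces are essentially non-branching (via Rajala--Sturm) and satisfy the (qualitative) $\MCP$ condition. The symmetric argument for the ``in particular'' part also matches the paper's intent.
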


The second part of Theorem \ref{T:RCD} therefore says that once the support $X$ and a metric $\sfd$ are fixed, there is a unique reference measure, 
up to densities, so that the triple $(X,\sfd,\mu)$ verifies ${\sf RCD}^{*}$.

Both Theorem \ref{T:2} and Theorem \ref{T:RCD} can be restated dropping the assumption of $\mm (\Omega \setminus \mathsf{Ip}(\mu)) = 0$ (and the analogous one 
in the uniqueness part) at the price of assuming regularity properties on the geometry of $\mathsf{Ip}(\mu)$ (and on $\mathsf{Ip}(\mm)$):  one way (see Theorem \ref{T:metric2}) is to assume that $\mathsf{Ip}(\mu)$ is $\mu$-connected. For the precise meaning of this assumption 
see Definition \ref{D:muconnected}, here we only mention that it is inspired by, and resemble, the convexity property of regular points in Alexandrov geometry.
Another way is to assume that  $\mathsf{Ip}(\mu)$ forms an open set in $X$, see Theorem \ref{T:metric3}. 

\medskip

Section \ref{S:stability} is devoted to the investigation of general properties of reference measures:  we will establish the local-to-global property of reference measures under the non-branching assumption,  we will show that  the multiplication by a density does not affect the property of being a reference measure and  finally we will address  the problem of  stability of reference measures  with respect to  measured Gromov-Hausdorff convergence: 
it will be proved that if the reference measures are probabilities and if a uniform bound  on the second marginal of the inversion plan holds, 
the notion of reference measure is stable. See Theorem \ref{T:stability} for the precise statement and Theorem  \ref{T:unique} for its 
application to uniqueness of limit reference measures. This last result should be compared with the aforementioned analogous statement for Ricci limits spaces obtained by Cheeger-Colding \cite{cc:IIIJDE2000}.

%
%
%

\hyphenation{pro-ba-bi-li-ty se-cond}


\medskip

The final part of the paper, that is formed by Section \ref{S:InvAlex} and Section \ref{S:example}, is devoted to discuss examples 
and applications of the introduced techniques. 

The main result will be the existence of an inversion plan at almost every regular point of an Alexandrov space with curvature bounded from below, where the 
almost everywhere and the inversion plan are referred to the Hausdorff measure of the right dimension, see Theorem \ref{T:Alexandrov-general}.
This shows that the Hausdorff measure is a reference measure and permits to apply all the general theorems stated so far to Alexandrov spaces. 

In the last section we then present smooth examples. The term smooth here is used to emphasize that in those spaces every point admits an inversion plan. 
The smooth examples will be: the standard volume measure on a smooth Riemannian manifold, 
the Hausdorff measure on an Alexandrov space with curvature bounded from above and below (see Corollary \ref{C:Alex}) and the Heisenberg group 
endowed with the Carnot-Carath\'eodory distance and the Haar measure (see Corollary \ref{cor:Heisenberg}).

As a final comment we want to stress that, while the proof of the existence of an inversion plan in a smooth context relies mainly on the 
bi-Lipschitz regularity of the exponential map, such ingredient on a general Alexandrov space fails to hold.  
Hence a finer analysis is needed. 
Combining the Lipschitz regularity of the exponential map together with the fact that it maps sets of positive measure to sets of positive measure,
using also Disintegration Theorem, one can construct a ``local'' inversion plan.  
Suitably iterating this construction one can obtain a ``global'' inversion plan and prove Theorem \ref{T:Alexandrov-general}.
The existence of an inversion plan can be therefore  understood as a measure-theoretic reformulation of a bi-Lipschitz control 
on the behavior of geodesic hinges and, as for a general Alexandrov space not so much can be proved on the regularity of the exponential map, 
Theorem \ref{T:Alexandrov-general} seems to be a novelty also in this direction.

We end the introduction with some comments regarding possible future applications of the techniques here introduced.  

\begin{itemize}
\item In the class of those metric measure spaces admitting a bi-Lipschitz map with a subset of a Euclidean space, 
one of the relevant questions to ask is whether the image of such a metric space has positive Lebesgue measure or not. 
To this aim, let $(X,\sfd,\mu)$  satisfy $\MCP$,
$\phi : X \to \R^{d}$ be a bi-Lipschitz map for some positive $d$ and call $\eta : = (\phi)_{\sharp} \mu$.  
Proving that $\eta$ verifies the non-degeneracy condition (see Definition \ref{D:nondegenerate}) 
would imply, thanks to Theorem \ref{T:1}, that $\mathcal{L}^{d}(\phi(X)) > 0$; yielding the  rectifiability property of $(X,\sfd,\mu)$. 
\item In the recent \cite{CJ}, Csornyei and Jones proved that if a Radon measure 
$\mu$ over $\R^{d}$ makes the triple $(\R^{d},|\cdot|, \mu)$ a Lipschitz differentiability space (see \cite{C} for the definition of Lipschitz differentiability space), 
then $\mu$ is absolutely continuous with respect to the $d$-dimensional Lebesgue measure $\L^{d}$.
Then Theorem \ref{T:1} could be compared with \cite{CJ}. In particular a relevant question could be to check if a metric measure space verifying the 
\emph{non-degeneracy condition} is also a Lipschitz differentiability space.

\item  Understanding and characterizing the singular part of a non smooth space is  an important and  challenging issue. 
As explained above the set of inversion points is strongly linked with the regular set, i.e. the complementary of the singular set. 
It would be interesting  to further investigate this link for instance in Alexandrov, or more generally in ${\sf RCD}^*(K,N)$-spaces.
\item  Given an  ${\sf RCD}^*(K,N)$-space $(X,\sfd,\mm)$, it is an interesting open problem whether the measure $\mm$ 
is absolutely continuous with respect to the Hausdorff measure of the relevant dimension. 
We believe that the techniques here introduced could be useful to attach this question which would be the natural 
generalization to ${\sf RCD}^*(K,N)$-spaces to the aforementioned result of 
Cheeger-Colding \cite{cc:IIIJDE2000} in the framework of $pmGH$-limits of Riemannian manifolds satisfying lower Ricci curvature bounds.
\item In Section \ref{S:stability} we address the problem of stability of a reference measure with respect to the $pmGH$-convergence. 
As a next step one would like to study measured tangent spaces to pointed  m.m.s. endowed with a reference measure. 
Recall that the set ${\rm Tan}(X,\sfd,\mu,\bar x)$ of tangent spaces to $(X,\sfd,\mu)$ at $\bar x$ is the collection of all $pmGH$ limits of sequences 
$\left(X, 1/r_{i}\cdot \sfd, \mu_{\bar x}^{r_{i}}, \bar x \right)$ where $\mu_{\bar x}^{r_{i}}$ is a properly rescaled version of $\mu$.
\end{itemize}

%
%
%
%
%

\bigskip

\noindent {\bf Acknowledgment.}  A.M. acknowledges the support of the ETH Fellowship.  The project started during the conference YEP XI  at EURANDOM (Eindhoven, The Netherlands) and continued during the ``ERC Research Period on Calculus of Variations and Analysis in Metric Spaces'' at Scuola Normale Superiore (Pisa-Italy), the  authors wish to thank the institutions  for the excellent working conditions and  the organizers of the events  for the stimulating atmosphere.  Many thanks also to Luigi Ambrosio and Tapio Rajala for carefully reading a preliminary version of the paper,  to Anton Petrunin for a fruitful conversation on  Alexandrov spaces and to the two anonymous referees for the valuable comments and suggestions.
\bigskip


\section{Setting}
We now recall some terminology and general notation. In what follows we say that a triple $(X,\sfd, \mm)$ is a metric measure space, m.m.s. for short, 
if and only if:
\begin{itemize}
\item $(X, \sfd)$ is a complete and separable metric space;
\item the measure $\mm$ belongs to $\M^+(X)$,
\end{itemize}
where $\M^+(X)$ denotes the space of positive Radon measure over $X$. 
We will also sometimes assume $\mm$ to be a probability measure, that is $\mm(X) =1$, but this will be specified at the beginning of each section. 
Moreover a metric space is a geodesic space if and only if for each $x,y \in X$ 
there exists $\gamma \in \Geo$ so that $\gamma_{0} =x, \gamma_{1} = y$.
Here we are using the following notation for the space of geodesics:
\[
\Geo : = \{ \gamma \in C([0,1], X):  \sfd(\gamma_{s},\gamma_{t}) = (s-t) \sfd(\gamma_{0},\gamma_{1}), s,t \in [0,1] \}.
\]
The metric ball is denoted with   $B_{r}(o): = \{ z \in X: \sfd(z,o) < r \}.$ 
Recall that for complete geodesic spaces local compactness is equivalent to properness (a metric space is proper if every closed ball is compact).
As we will always deal with spaces enjoying some type of measure contraction property implying an upper bound on the dimension of the space,
we directly assume the ambient space $(X,\sfd)$ to be proper. Hence from now on we assume the following:
the ambient metric space $(X, \sfd)$ is geodesic, complete, separable and proper.

\medskip

We will also use quite often the set of geodesics ending in a fixed point of the space: for $z \in X$
$$
\Geo(z) : = \{ \gamma \in \Geo : \gamma_{1} = z \} = \Geo \cap {\rm e}_{1}^{-1}(z),
$$
where for any $t\in [0,1]$,  ${\rm e}_{t}$ denotes the evaluation map: 
$$
  {\rm e}_{t} : \Geo \to X, \qquad {\rm e}_{t}(\gamma) : = \gamma_{t}.
$$
A geodesic metric space $(X,\sfd)$ is said to be non-branching if and only if for any $\gamma^{1},\gamma^{2} \in \Geo$, it holds:
$$
\gamma_{0}^{1} = \gamma_{0}^{2}, \ \gamma_{\bar t}^{1} = \gamma_{\bar t}^{2}, \ \bar t\in (0,1)  
\quad 
\Longrightarrow 
\quad 
\gamma^{1}_{s} = \gamma^{2}_{s}, \quad \forall s \in [0,1].
$$
We denote with $\mathcal{P}_{2}(X)$ the space of probability measures with finite second moment  endowed with the $L^{2}$-Wasserstein distance  $W_{2}$ defined as follows:  for $\mu_0,\mu_1 \in \mathcal{P}_{2}(X)$ we set
\begin{equation}\label{eq:Wdef}
  W_2^2(\mu_0,\mu_1) = \inf_{ \pi} \int_X \sfd^2(x,y) \, \pi(dxdy),
\end{equation}
where the infimum is taken over all $\pi \in \mathcal{P}(X \times X)$ with $\mu_0$ and $\mu_1$ as the first and the second marginal.
Assuming the space $(X,\sfd)$ to be geodesic, also the space $(\mathcal{P}_2(X), W_2)$ is geodesic. 
It turns out that any geodesic $(\mu_t)_{t \in [0,1]}$ in $(\mathcal{P}_2(X), W_2)$  can be lifted to a measure $\nu \in {\mathcal {P}}(\Geo)$, 
so that $({\rm e}_t)_\sharp \nu = \mu_t$ for all $t \in [0,1]$. Given $\mu_{0},\mu_{1} \in \mathcal{P}_{2}(X)$, we denote by 
$\Opt(\mu_{0},\mu_{1})$ the space of all $\nu \in \mathcal{P}(\Geo)$ for which $({\rm e}_0,{\rm e}_1)_\sharp \nu$ 
realizes the minimum in \eqref{eq:Wdef}. If $(X,\sfd)$ is geodesic, then the set  $\Opt(\mu_{0},\mu_{1})$ is non-empty for any $\mu_0,\mu_1\in \mathcal{P}_2(X)$.

It is also convenient to introduce the following closed sets: 
\begin{eqnarray}
H &: =& \left\{ (x,y,z) \in  X^{3} :  \sfd(x, y) = \sfd(x, z) + \sfd(z, y)  \right\},  \nonumber \\
H(z) &: =& P_{12}( H \cap (X^{2} \times \{ z\}))=\{(x,y)\in X^2 \,:\, \sfd(x,y)=\sfd(x,z)+\sfd(z,y)\}, \label{E:couples}
\end{eqnarray}
where for any $n \in \N$, $P_{ij} : X^{n} \to X^2$ is the projection on the $ij$-th component, for $i,j = 1, \dots, n$.


\subsection{The essential non-branching property}\label{Ss:essentially}

We recall the following definition. 
\begin{definition}\label{D:essnonbranch}
A metric measure space $(X,\sfd, \mm)$ is \emph{essentially non-branching} if and only if for any $\mu_{0},\mu_{1} \in \mathcal{P}_{2}(X)$
which are absolutely continuous with respect to $\mm$ any element of $\Opt(\mu_{0},\mu_{1})$ is concentrated on a set of non-branching geodesics.
\end{definition}

If $(X,\sfd, \mm)$ is essentially non-branching, one can deduce some information on the branching points as follows. 
Consider the set 
\begin{equation}\label{E:branching}
D : = \{ (z,w) \in X \times X:  \gamma^{1},\gamma^{2} \in \Geo, \gamma^{1}_{0} = \gamma^{2}_{0} = w, \, \gamma^{1}_{1} = \gamma^{2}_{1} = z,
\gamma^{1}\neq \gamma^{2} \},
\end{equation}
and observe that thanks to the properness assumption, it is $\sigma$-compact: it can be written as  countable union of compact sets.
Observe that also $X^{2} \setminus D$ is $\sigma$-compact and $D$ is symmetric: if $(z,w) \in D$ also $(w,z) \in D$.
From essential non-branching we can deduce that the $\sigma$-compact set
$$
D(z) : = P_{2} \left( D \cap (\{z\} \times X) \right),
$$
has $\mm$-measure zero (see for instance \cite{RS}, in any case this statement will follow by the arguments below which give a more detailed description of the branching set). 

We need a more refined property of branching structures 
that can be obtained observing that any $L^{2}$ optimal transportation between any measure and a Dirac delta, say in $z \in X$, is also 
an $L^{1}$-optimal transportation. What follows is contained in \cite{cava:RCD} and is proved for $\mathsf{RCD}$ spaces but the same result 
holds for essentially non-branching spaces. 

Consider the following closed sets: 
$$
\Gamma : = \left\{ (x,y,z) \in X^{3} : \sfd(x,z) - \sfd(y,z) = \sfd(x,y)   \right\}, \qquad  \Gamma^{-1} : = \left\{ (x,y,z) \in X^{3} : (y,x,z) \in \Gamma \right\},
$$
and $R : = \Gamma \cup \Gamma^{-1}$. 
%
%
We want to analyze possible branching structures inside $X\times X$.
We therefore consider the set of forward branching: 
\begin{equation}\label{E:branchforward}
A_{+} : = \left\{ (x,z) \in X\times X : \exists \, y, w \in X, (x,y,z), (x,w,z) \in \Gamma, \, (y,w,z) \notin R \right\},
\end{equation}
and the set of backward branching: 
\begin{equation}\label{E:branchback}
A_{-} : = \left\{ (x,z) \in X\times X : \exists \, y, w \in X, (x,y,z), (x,w,z) \in \Gamma^{-1}, \, (y,w,z) \notin R \right\}.
\end{equation}
As the space $X$ is proper by assumption, it is easily seen that both $A_{+}$ and $A_{-}$ are $\sigma$-compact sets. The information we will use is the following: define 
$$
A_{+}(z) : = P_{1} \left( (X \times \{z \}) \cap A_{+} \right), \qquad A_{-}(z) : = P_{1} \left( (X \times \{z \}) \cap A_{-} \right),
$$
then it follows from Proposition 4.5 of \cite{cava:RCD} (notice that what is denoted with $\mathcal{T}$ in \cite{cava:RCD},  in the present framework coincides with the whole $X\times X$) that 
\begin{equation}\label{eq:Apm0}
\mm (A_{+ }(z) ) = \mm(A_{-}(z)) = 0,
\end{equation}
provided $(X,\sfd,\mm)$ is essentially non-branching. In particular 
the $\sigma$-compact set
$$
\mathcal{T}_{nb}(z) : = X \setminus \left( A_{+}(z) \cup A_{-}(z) \right),
$$
contains all the points, up to a set of $\mm$-measure zero, moved via optimal transportation towards the end point $z$ and the 
whole set $\mathcal{T}_{nb}(z)$ is covered by a family of non-branching geodesics.
It will also convenient to introduce the set $\mathcal{T}_{nb} : = X \times X \setminus (A_{+} \cup A_{-})$ and to observe that trivially 
$$
\mathcal{T}_{nb}(z) = P_{1} \left( (X \times \{z \}) \cap \mathcal{T}_{nb} \right).
$$
\medskip

\subsection{Disintegration of measures}
We include here a version of the Disintegration Theorem (for a comprehensive treatment see for instance  \cite{Fre:measuretheory4}).

Given a measurable space $(R, \mathscr{R})$, i.e.  $\mathscr{R}$ is  a $\sigma$-algebra of subsets of $R$, 
and a function $r: R \to S$, with $S$ general set, we can endow $S$ with the \emph{push forward $\sigma$-algebra} $\mathscr{S}$ of $\mathscr{R}$:
\[
Q \in \mathscr{S} \quad \Longleftrightarrow \quad r^{-1}(Q) \in \mathscr{R},
\]
which could be also defined as the biggest $\sigma$-algebra on $S$ such that $r$ is measurable. Moreover given a probability measure  $\rho$  on 
$(R,\mathscr{R})$, define a probability  measure $\eta$ on $(S,\mathscr{S})$  by push forward via $r$, i.e. $\eta := r_{\sharp} \rho$.  

\begin{definition}
\label{defi:dis}
A \emph{disintegration} of $\rho$ \emph{consistent with} $r$ is a map (with slight abuse of notation still denoted with) $\rho: \mathscr{R} \times S \to [0,1]$ such that, set $\rho_s(B):=\rho(B,s)$, the following hold:
\begin{enumerate}
\item  $\rho_{s}(\cdot)$ is a probability measure on $(R,\mathscr{R})$ for all $s\in S$,
\item  $\rho_{\cdot}(B)$ is $\eta$-measurable for all $B \in \mathscr{R}$,
\end{enumerate}
and satisfies for all $B \in \mathscr{R}, C \in \mathscr{S}$ the consistency condition
\[
\rho\left(B \cap r^{-1}(C) \right) = \int_{C} \rho_{s}(B)\, \eta(ds).
\]
A disintegration is \emph{strongly consistent with respect to $r$} if for all $s$ we have $\rho_{s}(r^{-1}(s))=1$.
The measures $\rho_s$ are called \emph{conditional probabilities}.
\end{definition}

We recall the following version of the disintegration theorem that can be found in \cite[Section 452]{Fre:measuretheory4}.
Recall that a $\sigma$-algebra $\mathcal{J}$ is \emph{countably generated} if there exists a countable family of sets so that 
$\mathcal{J}$ coincide with the smallest $\sigma$-algebra containing them.

\begin{theorem}[Disintegration of measures]
\label{T:disintr}
Assume that $(R,\mathscr{R},\rho)$ is a countably generated probability space and  $R = \cup_{s \in S}R_{s}$ is a partition of R. Denote with $r: R \to S$ the quotient map: 
$$
 s = r(x) \iff x \in R_{s},
$$
and with $\left( S, \mathscr{S},\eta \right)$ the quotient measure space. Assume $(S,\mathscr{S})=(X,\mathcal{B}(X))$ with $X$ Polish space, where $\mathcal{B}(X)$ denotes the Borel $\sigma$-algebra.
Then there exists a unique strongly consistent disintegration $s \mapsto \rho_{s}$ w.r.t. $r$, where uniqueness is understood in the following sense:
if $\rho_{1}, \rho_{2}$ are two consistent disintegrations then $\rho_{1,s}(\cdot)=\rho_{2,s}(\cdot)$ for $\eta$-a.e. $s \in S$.
\end{theorem}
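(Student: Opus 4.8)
The plan is to reduce the statement to the classical disintegration theorem for Borel probability measures on Polish spaces, transfer the resulting family of conditional measures back to $R$, and deal with uniqueness separately as a soft consequence of the countable generation of $\mathscr{R}$. To build the Polish model, fix a countable family $\{B_n\}_{n\in\N}$ generating $\mathscr{R}$ and let $\phi\colon R\to\{0,1\}^{\N}$, $\phi(x):=(\ind_{B_n}(x))_{n}$; since the coordinate cylinders generate $\mathcal{B}(\{0,1\}^{\N})$ and pull back under $\phi$ to the $B_n$, one has $\phi^{-1}(\mathcal{B}(\{0,1\}^{\N}))=\mathscr{R}$. As $\mathscr{S}=\mathcal{B}(X)$ is by hypothesis the push-forward $\sigma$-algebra of $r$, the map $r$ is $\mathscr{R}$-measurable, so $\psi:=(\phi,r)\colon R\to Y:=\{0,1\}^{\N}\times X$ is measurable and $\psi^{-1}(\mathcal{B}(Y))=\mathscr{R}$. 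Let $p\colon Y\to X$ denote the second projection, so $p\circ\psi=r$, put $\tilde\rho:=\psi_{\sharp}\rho\in\mathcal{P}(Y)$, and note $p_{\sharp}\tilde\rho=\eta$; here $Y$ is Polish and $\tilde\rho$ is automatically a Radon Borel probability.

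Next, I would disintegrate on the model: by the classical disintegration theorem for the Borel map $p\colon(Y,\mathcal{B}(Y),\tilde\rho)\to(X,\mathcal{B}(X),\eta)$ between Polish spaces there is a family $\{\tilde\rho_s\}_{s\in X}\subseteq\mathcal{P}(Y)$ with $s\mapsto\tilde\rho_s(B)$ Borel for every $B\in\mathcal{B}(Y)$, satisfying $\tilde\rho(B\cap p^{-1}(C))=\int_C\tilde\rho_s(B)\,\eta(ds)$ for all Borel $B,C$, and with $\tilde\rho_s(p^{-1}(s))=1$ for $\eta$-a.e.\ $s$. If one prefers to prove this rather than cite it: first produce a regular conditional probability for $p$ (built on a countable algebra generating $\mathcal{B}(Y)$ and extended, or via martingale convergence), then upgrade to strong consistency by testing the consistency identity against $h=u\circ p$ for $u$ ranging over a countable convergence-determining family of bounded continuous functions on $X$, which forces $p_{\sharp}\tilde\rho_s=\delta_s$ off an $\eta$-null set.

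Now I transfer to $R$. The set $Y\setminus\psi(R)$ is co-analytic, hence $\tilde\rho$-measurable, has empty $\psi$-preimage and therefore $\tilde\rho$-measure zero; by inner regularity of $\tilde\rho$ the consistency identity extends to it, so $\tilde\rho_s(\psi(R))=1$ for $\eta$-a.e.\ $s$. For such $s$, if $\psi^{-1}(B)=\psi^{-1}(B')$ then $B$ and $B'$ coincide on $\psi(R)$, whence $\tilde\rho_s(B)=\tilde\rho_s(B')$; thus $\rho_s(A):=\tilde\rho_s(B)$, for any Borel $B\subseteq Y$ with $\psi^{-1}(B)=A$, well-defines a probability measure on $(R,\mathscr{R})$ (countable additivity being inherited since $\tilde\rho_s$ lives on $\psi(R)$), while on the exceptional $\eta$-null set of $s$ we set $\rho_s$ to be a point mass on $R_s$. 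It remains to verify the three requirements: $s\mapsto\rho_s(A)=\tilde\rho_s(B)$ is $\eta$-measurable because its model counterpart is; for $A=\psi^{-1}(B)\in\mathscr{R}$ and $C\in\mathscr{S}$ one has $r^{-1}(C)=\psi^{-1}(p^{-1}(C))$, so that
\[
\rho\bigl(A\cap r^{-1}(C)\bigr)=\tilde\rho\bigl(B\cap p^{-1}(C)\bigr)=\int_C\tilde\rho_s(B)\,\eta(ds)=\int_C\rho_s(A)\,\eta(ds),
\]
which is the consistency condition; and $\rho_s(r^{-1}(s))=\tilde\rho_s(p^{-1}(s))=1$ gives strong consistency.

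For uniqueness, let $\rho_{1}$ and $\rho_{2}$ be two consistent disintegrations, with conditional measures $\rho_{1,s}$ and $\rho_{2,s}$, and let $\mathcal{A}$ be the countable algebra generated by $\{B_n\}$, so that $\sigma(\mathcal{A})=\mathscr{R}$. Fixing $A\in\mathcal{A}$ and testing the consistency condition against every $C\in\mathscr{S}$ gives $\int_C\rho_{1,s}(A)\,\eta(ds)=\rho(A\cap r^{-1}(C))=\int_C\rho_{2,s}(A)\,\eta(ds)$, hence $\rho_{1,s}(A)=\rho_{2,s}(A)$ for $\eta$-a.e.\ $s$; intersecting these conull sets over the countably many $A\in\mathcal{A}$ leaves an $\eta$-conull set on which $\rho_{1,s}$ and $\rho_{2,s}$ are finite measures agreeing on the generating algebra $\mathcal{A}$, hence on all of $\mathscr{R}$ by uniqueness of extensions (Dynkin's $\pi$-$\lambda$ theorem); this also identifies the disintegration built above as the unique strongly consistent one. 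I expect the main obstacle to be the interplay of the two genuinely non-soft ingredients: the \emph{strong} consistency in the Polish model (the only place where Polishness is really used) and the transfer step, where, since $\psi$ need not be injective, one must guarantee that the conditional measures charge only $\psi(R)$ so that the fibrewise pull-back along $\psi$ is well defined; by contrast the reduction and the uniqueness argument are routine, the latter using precisely the hypothesis that $\mathscr{R}$ is countably generated.
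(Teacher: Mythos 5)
The paper does not actually prove Theorem \ref{T:disintr}: it is quoted from Fremlin \cite{Fre:measuretheory4}, Section 452, so there is no internal argument to compare yours against. Judged on its own, your reduction to the Cantor-space model $\psi=(\phi,r):R\to Y=\{0,1\}^{\N}\times X$, the appeal to the classical disintegration theorem on the Polish space $Y$, and the uniqueness argument (test the consistency identity on the countable generating algebra, intersect the conull sets, conclude via the $\pi$-$\lambda$ theorem) are all sound; the uniqueness part in particular is complete and is indeed where countable generation of $\mathscr{R}$ is used.

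The existence part, however, has a genuine gap at the transfer step --- exactly the point you flag as the main obstacle. You assert that $Y\setminus\psi(R)$ is co-analytic, i.e.\ that $\psi(R)$ is analytic. There is no reason for this: $(R,\mathscr{R})$ is an \emph{abstract} countably generated measurable space, so $\phi(R)$, hence $\psi(R)$, can be an arbitrary subset of $Y$ (take $R$ to be any subset $W\subseteq\{0,1\}^{\N}$ with the trace Borel $\sigma$-algebra; then $\phi$ is the inclusion and $\phi(R)=W$, which need not even be universally measurable). What is true for free is only that $\psi(R)$ has full $\tilde\rho$-\emph{outer} measure, i.e.\ every Borel $N$ disjoint from $\psi(R)$ satisfies $\tilde\rho(N)=0$. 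That is strictly weaker than what the well-definedness of $\rho_s(A):=\tilde\rho_s(B)$ requires, namely that for $\eta$-a.e.\ $s$ the conditional $\tilde\rho_s$ annihilates \emph{every} Borel set disjoint from $\psi(R)$: the null set of $s$ coming from $\int\tilde\rho_s(N)\,\eta(ds)=\tilde\rho(N)=0$ depends on $N$, and there are uncountably many such $N$. This is not a cosmetic issue --- there exist sets of full outer and zero inner measure in $[0,1]^2$ whose vertical sections fail to be thick in their fibres for a non-negligible set of parameters --- so closing it requires genuinely more structure (this is where the compact/perfect measure-space apparatus of Fremlin's proof enters). Note that in every application made in this paper $R$ is a Borel (indeed $\sigma$-compact) subset of the Polish space $X$ itself, in which case $\psi(R)$ really is Borel/analytic and your argument goes through verbatim; but as a proof of the theorem as stated for an abstract countably generated probability space, the co-analyticity claim is unjustified and the transfer step is incomplete.
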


\section{Non-degenerate reference measure on $\R^{d}$}\label{S:Rn}

Let $\mm$ be a positive  Radon measure over $\R^d$ that we will consider to be equipped with the Euclidean distance. 
Denote with $\Omega$ the support of $\mm$: $\Omega:=\spt(\mm)$. 

\begin{definition}[Non-Degenerate measure]\label{D:nondegenerate}
Consider the m.m.s. $(\Omega, |\cdot|, \mm)$. We say that the measure $\mm$ is \emph{non-degenerate} if and only if for any compact set $A\subset \Omega$ of positive $\mm$-measure  it holds 
\begin{equation}\label{E:nondegenerate}
  \mm(A_{t,o}) > 0, \quad \forall \, t \in [0,1),
\end{equation}
where $o$ is any element of $\Omega$ and
\begin{equation}\label{E:evolution}
   A_{t,o} : = \{ z \in \R^{d} : z = (1-t) y + t o,  \, y \in A \}.
\end{equation}
\end{definition}
\medskip

\begin{remark}
The non degeneracy condition may be regarded as a very weak curvature condition on the m.m.s. $(\Omega, |\cdot|, \mm)$. Indeed it is implied by $\MCP(K,n)$, for any $K$ and $n$.
\end{remark}

For the rest of this section we tacitly  assume that $\mm$ is a non-degenerate measure.
We start by proving a geometric property of $\Omega$.

\begin{lemma}\label{L:convex}
The set $\Omega$ is a convex subset of $\R^{d}$.
\end{lemma}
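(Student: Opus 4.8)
The plan is to argue by contradiction, using the non-degeneracy hypothesis essentially verbatim: the condition \eqref{E:nondegenerate} forbids mass from being "transported through a hole" in the support, and such a hole is precisely what a failure of convexity would produce. So suppose $\Omega$ is not convex. Then there are $x,y\in\Omega$ and a point $w$ on the open segment joining them, which I write as $w=(1-t)y+tx$ for some $t\in(0,1)$, with $w\notin\Omega$. Since $\Omega=\spt(\mm)$ is closed, there is $\delta>0$ with $B_\delta(w)\cap\Omega=\emptyset$, and in particular $\mm\bigl(B_\delta(w)\bigr)=0$.

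Next I would exhibit a compact set of positive measure sitting near $y$ whose contraction toward $o=x$ with ratio $t$ is trapped inside the forbidden ball $B_\delta(w)$. Concretely, fix $r>0$ with $(1-t)r<\delta$ and set $A:=\overline{B_r(y)}\cap\Omega$. This is compact, because $\Omega$ is closed and $\overline{B_r(y)}$ is compact, and $\mm(A)>0$ since $y\in\spt(\mm)$ forces every ball centered at $y$ to have positive measure. Now for every $a\in A$ one computes
\[
\bigl|(1-t)a+tx-w\bigr| = (1-t)\,|a-y| \le (1-t)\,r < \delta,
\]
so, recalling the definition \eqref{E:evolution}, $A_{t,x}\subset B_\delta(w)$ and hence $\mm(A_{t,x})\le \mm\bigl(B_\delta(w)\bigr)=0$. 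This contradicts \eqref{E:nondegenerate} (applied with the chosen $t\in[0,1)$ and with $o=x\in\Omega$), and the contradiction proves that $\Omega$ is convex.

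The argument is elementary and short; the only point that needs a little care is the quantitative bookkeeping ensuring that the contracted copy $A_{t,x}$ of a small neighborhood of $y$ lands entirely inside the measure-zero ball around $w$, which is handled by choosing $r$ small relative to $\delta$ and $1-t$. I do not anticipate any serious obstacle: the whole content of the statement is that non-degeneracy rules out the one obstruction to convexity, namely a missing interior point of a segment between two support points.
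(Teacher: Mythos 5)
Your proof is correct and is essentially the paper's argument in contrapositive form: both contract a small ball around one endpoint toward the other and use the inclusion of the contracted set in a small ball around the intermediate point, together with non-degeneracy, to conclude that intermediate points lie in $\spt(\mm)$. The paper phrases this directly (every ball around $x_t=(1-t)x+ty$ has positive measure, hence $x_t\in\Omega$) rather than by contradiction, but the computation and the use of the hypothesis are identical.
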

\begin{proof}
Consider $x,y\in \spt(\mm)$. Then for any positive radius $\delta$ it holds $\mm(B_{\delta}(x)\cap \Omega) >0$. 
Then \eqref{E:nondegenerate} implies that
$$
 \mm( (\bar B_{\delta}(x) \cap \Omega)_{t,y}) > 0, \quad \forall \, t \in [0,1).
$$
Now observe that for any $t\in [0,1)$, 
$$
(\bar B_{\delta}(x)\cap \Omega)_{t,y} \subset B_{(1-t)\delta} (x_{t}), \qquad \textrm{with }\ x_{t} : = (1-t) x + t y.
$$
It follows that $\mm(B_{(1-t)\delta} (x_{t})) >0$ for any positive $\delta$, therefore $x_{t} \in \spt(\mm)= \Omega$ and the claim follows.
\end{proof}

We deduce from Lemma \ref{L:convex} that $\Omega$ has a well defined dimension: 
there exists a minimal $k \in \N$ with $k \leq d$ so that $\Omega$ 
is contained in a $k$-dimensional affine subspace of $\R^{d}$ that we identify with $\R^{k}$. 

We can therefore consider the relative interior and the relative boundary of $\Omega$ in $\R^{k}$: there exist $U, C \subset \R^{k}$ 
so that $\Omega = U \cup C$ and $U \cap C = \emptyset$ with $U$ maximal open subset in $\R^{k}$ contained in $\Omega$.

\begin{proposition}\label{P:non-degenerateinterior}
If $k$ is the dimension of $\Omega$, the measure $\mm\llcorner_{U}$ is absolutely continuous with respect to the $k$-dimensional Lebesgue measure $\L^{k}$. 
\end{proposition}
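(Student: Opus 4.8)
The plan is to show that $\mm\llcorner_U$ puts zero mass on every $\L^k$-null set by exploiting the non-degeneracy condition in a ``dimension-reduction by translation'' argument. First I would reduce to a local statement: it suffices to show that for every point $x_0 \in U$ there is a ball $B_r(x_0) \subset U$ such that $\mm\llcorner_{B_r(x_0)} \ll \L^k$. Second, fix such an $x_0$ and a small ball $B = B_{2r}(x_0)$ with $\overline{B} \subset U$, and suppose for contradiction that there is a compact set $A \subset B$ with $\L^k(A) = 0$ but $\mm(A) > 0$. The idea is that the non-degeneracy condition lets us ``fatten'' $A$ in a controlled way by moving it toward many different base points $o$: for each $o \in \Omega$, the set $A_{t,o}$ (a scaled-down, translated copy of $A$) still has positive $\mm$-measure for all $t \in [0,1)$, yet $A_{t,o}$ is again a Lebesgue-null set (scaling and translation preserve $\L^k$-nullity since $k$ is the true dimension of $\Omega$ and $o \in V^k = \R^k$). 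So non-degeneracy alone does not immediately contradict nullity; the point must be that by varying $o$ over an open set of base points inside $U$ and $t$ near $1$, the union $\bigcup_{o} A_{t,o}$ — or rather a suitable disjointification — forces $\mm$ to charge a set of positive $\mm$-measure infinitely many times in a way incompatible with $\mm$ being a Radon (hence $\sigma$-finite) measure. More precisely, I would choose countably many base points $o_j \in U$ and parameters $t_j \uparrow 1$ so that the sets $A_{t_j, o_j}$ are pairwise disjoint (or have uniformly bounded overlap), all contained in a fixed compact subset of $U$, and each of $\mm$-measure bounded below by a fixed $\delta > 0$; this contradicts $\mm(\text{compact}) < \infty$. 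Arranging disjointness is where the geometry enters: since $A$ has diameter $\le 4r$ and $\L^k(A) = 0$, a generic small translate/contraction of $A$ will be essentially disjoint from $A$ itself, and one iterates.

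An alternative, cleaner route — and the one I would actually try to push through — is the following. Suppose $\mm(A) > 0$ with $\L^k(A) = 0$, $A \subset B$ compact. For $o$ ranging in a ball $B_\rho(x_1) \subset U$ disjoint from $B$, consider the map $\Phi_t : A \times B_\rho(x_1) \to \R^k$, $\Phi_t(y,o) = (1-t)y + to$. For $t$ fixed close to $1$, $\Phi_t(\cdot, \cdot)$ is bi-Lipschitz in $o$ for each fixed $y$ with Lipschitz constants $\asymp t$, so for a.e. $y$ and a.e. $t$ the slice argument and Fubini give that $\bigcup_{o \in B_\rho(x_1)} A_{t,o}$ has positive $\L^k$-measure \emph{only if} $A$ does — but in fact I want the opposite inclusion-type estimate. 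The genuinely useful observation is: for $t$ close to $1$, $A_{t,o}$ is a set of diameter $\le (1-t)\,\diam(A)$ located near $o$; as $o$ varies over a set of positive $\L^k$-measure, the centers sweep out positive measure, and non-degeneracy says each $A_{t,o}$ has $\mm$-mass $\ge$ (something). Integrating $\mm(A_{t,o})$ against $\L^k(do)$ over $B_\rho(x_1)$ and using Fubini/Tonelli with the fact that each point of $\R^k$ lies in $A_{t,o}$ for a set of $o$'s of $\L^k$-measure exactly $(1-t)^{-k}\L^k(A) = 0$, one obtains $\int_{B_\rho(x_1)} \mm(A_{t,o})\,\L^k(do) = 0$, contradicting that the integrand is strictly positive on a set of positive $\L^k$-measure. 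This is the heart of the matter.

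Let me make that last computation precise, as it is the key step. Fix $t \in [0,1)$. By Tonelli applied to the indicator of $\{(z,o) : z \in A_{t,o}\} \subset \R^k \times \R^k$,
\begin{equation}\label{eq:tonelli-nondeg}
\int_{B_\rho(x_1)} \mm\bigl(A_{t,o}\bigr)\,\L^k(do) = \int_{\R^k} \L^k\bigl(\{o \in B_\rho(x_1) : z \in A_{t,o}\}\bigr)\,\mm(dz),
\end{equation}
\emph{provided} $\mm$ and $\L^k$ are both $\sigma$-finite so Tonelli applies — true here since $\mm$ is Radon and we work inside a compact set. Now $z \in A_{t,o}$ means $z = (1-t)y + to$ for some $y \in A$, i.e. $o = \frac{1}{t}(z - (1-t)y)$ with $y \in A$; hence $\{o : z \in A_{t,o}\}$ is contained in the image of $A$ under the affine map $y \mapsto \frac{1}{t}(z-(1-t)y)$, which has $\L^k$-measure $\bigl(\frac{1-t}{t}\bigr)^k \L^k(A) = 0$. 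Therefore the right-hand side of \eqref{eq:tonelli-nondeg} vanishes, so $\mm(A_{t,o}) = 0$ for $\L^k$-a.e. $o \in B_\rho(x_1)$. But $B_\rho(x_1) \subset U \subset \Omega$, so the non-degeneracy condition \eqref{E:nondegenerate} forces $\mm(A_{t,o}) > 0$ for \emph{every} $o \in \Omega$ and every $t \in [0,1)$ — a contradiction. Hence no such $A$ exists and $\mm\llcorner_U \ll \L^k$. The main obstacle I anticipate is purely bookkeeping: verifying the measurability of $(z,o) \mapsto \ind_{A_{t,o}}(z)$ and the applicability of Tonelli (both sides $\sigma$-finite on the relevant compact sets), and making sure $A_{t,o}$ genuinely stays inside the region where $\mm$ is finite; none of this is deep, but it must be stated. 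Everything else — convexity of $\Omega$, well-definedness of $k$, the affine identification $V^k \cong \R^k$ — is already available from Lemma \ref{L:convex} and the discussion preceding the Proposition.
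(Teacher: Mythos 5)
Your final paragraph is a correct and complete proof, and it follows the same basic strategy as the paper's: argue by contradiction with a compact $\L^k$-null set $A$ of positive $\mm$-measure, integrate $\mm(A_{t,o})$ over a $k$-dimensional family of contractions, apply Tonelli, and observe that for each fixed $z$ the fiber of base parameters sending some point of $A$ to $z$ is the image of $A$ under an invertible map, hence $\L^k$-null. The difference is in how you parametrize that family: the paper fixes a ball $B_\delta(x_0)\supset K$ and lets $(o,t)$ range over $\partial B_\delta(x_0)\times[0,1)$, so the inverse map $f_z(o,t)=(z-to)/(1-t)$ is nonlinear and the proof must compute its Jacobian (this is what Lemma \ref{lem:LinAlg} is for); you instead fix $t\in(0,1)$ and vary only $o$ over a ball $B_\rho(x_1)\subset U$, so the inverse map $y\mapsto \frac{1}{t}\left(z-(1-t)y\right)$ is affine and the nullity of its image is immediate from the scaling of Lebesgue measure. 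Your version is therefore more elementary and dispenses with the auxiliary determinant lemma. Two small points to tidy up: you must take $t>0$ (at $t=0$ the fiber $\{o: z\in A_{0,o}\}$ is all of $B_\rho(x_1)$ whenever $z\in A$, and the affine inversion degenerates), though any single $t\in(0,1)$ suffices for the contradiction; and the first two paragraphs of the proposal (the disjointification idea, the aborted bi-Lipschitz slicing) are dead ends that should be deleted, since the third paragraph is self-contained. The measurability and $\sigma$-finiteness checks you flag are indeed routine: $\{(z,o): z\in A_{t,o}\}$ is closed because $A$ is compact and $(z,o)\mapsto (z-to)/(1-t)$ is continuous, and everything lives in a fixed compact subset of $\R^k$ on which $\mm$ is finite.
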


\begin{proof}

Assume by contradiction the existence of a compact set $K \subset U$ so that 
\[
\mm(K) >0 ,\quad \mathcal{L}^{k}(K) = 0. 
\]

\noindent
The statement we are proving is local, therefore we can consider $x_{0} \in K$ and $\delta > 0$ so that $B_{\delta}(x_{0}) \subset U$ and $K \subset B_{\delta}(x_{0})$.
Since the measure $\mm$ is non-degenerate in $(\Omega, |\cdot|, \mm)$, for every $o \in \partial B_{\delta}(x_{0})$ and $t \in [0,1]$ it holds $\mm(K_{t,o}) > 0$. 
Therefore we have:
\begin{eqnarray}
0 &< &  \int_{[0,1]} \int_{\partial B_{\delta(x_{0})}} \mm(K_{t,o} ) \, \H^{k-1}(do)\, \mathcal{L}^{1}(dt)  \nonumber \\
&= & \mm\otimes \H^{k-1} \otimes \mathcal{L}^{1} \left( \{ (z,o,t) \in B_{\delta}(x_{0})\times \partial B_{\delta(x_{0})} \times [0,1) : z = (1-t)x +  t o, \, x \in K  \} \right)  \nonumber \\
& = &  \int_{B_{\delta}(x_{0})} \mathcal{H}^{k-1}\llcorner_{\partial B_{\delta(x_{0})}}\otimes \mathcal{L}^{1}\llcorner_{[0,1)} 
\left( \{ (o,t) :  f_{z}(o,t) \in K  \} \right) \mm(dz)  \nonumber \\
& = &  \int_{B_{\delta}(x_{0})} (f_{z})_\sharp\left( \mathcal{H}^{k-1}\llcorner_{\partial B_{\delta(x_{0})}}\otimes 
\mathcal{L}^{1}\llcorner_{[0,1)}\right)(K)~ \mm(dz), \label{eq:contr2.2}
\end{eqnarray}
where 
\[
f_{z}(o,t) : = \frac{z -to}{1-t}, \quad \forall t \in [0,1).
\]
Since $\mathcal{H}^{k-1}\llcorner_{\partial B_{\delta(x_{0})}}\otimes \mathcal{L}^{1}\llcorner_{[0,1)}$ is equivalent to $\mathcal{L}^{k}\llcorner_{B_{\delta}(x_{0})}$, 
the function $f_{z}$ can be defined directly on $B_{\delta}(x_{0})$.
For ease of notation we also assume $x_{0}=0$, then:
\[
f_{z}(w) : = \frac{z-w}{1-|w|/\delta} = \delta \cdot \frac{z-w}{\delta - |w|}.
\]
Then 
\begin{align*}
df_{z}(w) &~ = \delta (z-w) \nabla\left(\frac{1}{\delta-|w|}\right)^{T}  - \frac{\delta}{\delta -|w|} Id \crcr
&~ = \delta (z-w) \frac{1}{(\delta - |w|)^{2}} \left(\frac{w}{|w|}\right)^{T} - \frac{\delta}{\delta -|w|} Id. 
\end{align*}
Using the following Lemma \ref{lem:LinAlg}, we get
\begin{align*}
\det\left( - \frac{(\delta - |w|)}{\delta} df_{z}(w)\right) &~= \det \left(Id - \frac{1}{\delta - |w|} (z-w) \left(\frac{w}{|w|}\right)^{T} \right) \crcr
&~ = 1 - \frac{1}{(\delta - |w|)|w|} \langle z - w, w \rangle.
\end{align*}
Thus $\det\left( - \frac{(\delta - |w|)}{\delta} df_{z}(w)\right) = 0$ only when $\delta|w| = \langle z,w \rangle$. 
Since $z,w \in B_{\delta}(0)$, it follows that $\det(df_{z}(w)) \neq 0$ for all $w$ and $z$.
Therefore, $\mathcal{L}^{k}(K) = 0$ implies $(f_{z})_\sharp\left( \mathcal{H}^{k-1}\llcorner_{\partial B_{\delta(x_{0})}}\otimes \mathcal{L}^{1}\llcorner_{[0,1)}\right)(K) = 0$ 
contradicting  \eqref{eq:contr2.2} and  the claim is proved.
\end{proof}

\begin{lemma}\label{lem:LinAlg}
Let $Id$ be the $n\times n$ identity matrix and let $a, b$ be  $n\times 1$ column vectors.  
Then
$$
\det(Id+a b^T)=1+b^T \, a.
$$
\end{lemma}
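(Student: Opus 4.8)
The statement to prove is the matrix determinant lemma: $\det(Id + ab^T) = 1 + b^T a$ for column vectors $a, b$. Let me sketch a proof plan.

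\medskip

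The plan is to prove the matrix determinant lemma $\det(Id + ab^{T}) = 1 + b^{T}a$ by the standard block-matrix factorization argument. First I would handle the trivial case $a = 0$ (both sides equal $1$), so assume $a \neq 0$. The key idea is to embed the $n\times n$ computation into an $(n+1)\times(n+1)$ one and exploit the multiplicativity of the determinant together with invariance under Schur-complement manipulations.

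\medskip

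Concretely, I would consider the $(n+1)\times(n+1)$ block matrix
\[
M = \begin{pmatrix} Id & -a \\ b^{T} & 1 \end{pmatrix},
\]
and compute $\det M$ in two different ways. Expanding along the Schur complement of the top-left block $Id$ gives $\det M = \det(Id)\cdot(1 - b^{T}(Id)^{-1}(-a)) = 1 + b^{T}a$. On the other hand, expanding along the Schur complement of the bottom-right $1\times 1$ block gives $\det M = 1 \cdot \det(Id - (-a)(1)^{-1}b^{T}) = \det(Id + ab^{T})$. Equating the two expressions yields the claim. Equivalently, and perhaps cleaner for a write-up, one observes the factorization
\[
\begin{pmatrix} Id & -a \\ b^{T} & 1 \end{pmatrix}
= \begin{pmatrix} Id & 0 \\ b^{T} & 1 \end{pmatrix}
\begin{pmatrix} Id & -a \\ 0 & 1 + b^{T}a \end{pmatrix},
\]
which is checked by direct multiplication, and then takes determinants of both sides, using that the determinant of a block-triangular matrix is the product of the determinants of the diagonal blocks.

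\medskip

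There is essentially no obstacle here: the only thing to be slightly careful about is that the Schur-complement formula $\det\begin{pmatrix} A & B \\ C & D\end{pmatrix} = \det(A)\det(D - CA^{-1}B)$ requires $A$ invertible, but in our application $A = Id$, so this is automatic; hence the explicit factorization route, which requires no invertibility hypothesis at all, is the most self-contained and is the one I would present. An alternative proof, if one prefers to avoid block matrices entirely, is to note that $ab^{T}$ has rank at most one, so its only possibly-nonzero eigenvalue is $b^{T}a$ (with eigenvector $a$, since $(ab^{T})a = a(b^{T}a) = (b^{T}a)a$); then the eigenvalues of $Id + ab^{T}$ are $1 + b^{T}a$ once and $1$ with multiplicity $n-1$, and $\det(Id+ab^{T})$ is their product, namely $1 + b^{T}a$. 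This eigenvalue argument is quick but requires a brief justification that $ab^{T}$ is diagonalizable (or a passage to generalized eigenspaces), so I would keep the block-factorization proof as the primary one.
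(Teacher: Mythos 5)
Your proof is correct and is essentially the same block-matrix argument as the paper's: the paper writes the single three-factor identity
$\bigl[\begin{smallmatrix} Id & 0 \\ b^T & 1\end{smallmatrix}\bigr]\bigl[\begin{smallmatrix} Id+ab^T & a \\ 0 & 1\end{smallmatrix}\bigr]\bigl[\begin{smallmatrix} Id & 0 \\ -b^T & 1\end{smallmatrix}\bigr]=\bigl[\begin{smallmatrix} Id & a \\ 0 & 1+b^Ta\end{smallmatrix}\bigr]$
and takes determinants, which is the same ``evaluate one $(n+1)\times(n+1)$ determinant in two ways'' idea as your two Schur-complement expansions of $\bigl[\begin{smallmatrix} Id & -a \\ b^T & 1\end{smallmatrix}\bigr]$. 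One small caveat: your displayed two-factor factorization by itself only gives $\det M = 1+b^Ta$, so it is not quite ``equivalent'' to the full argument — you still need the other Schur complement (or a second factorization) to get $\det M=\det(Id+ab^T)$, as you correctly do in the first part of your write-up.
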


\begin{proof}
Observe that one can write
$$
\begin{bmatrix} 
Id & 0 \\  
b^T & 1 
\end{bmatrix} 
\cdot 
\begin{bmatrix} 
Id+ab^T & a \\  
0 & 1 
\end{bmatrix}  
\cdot  
\begin{bmatrix} 
Id & 0 \\ 
-b^T & 1 
\end{bmatrix}  
= 
\begin{bmatrix} 
Id & a \\  
0 & 1+b^T \, a 
\end{bmatrix} 
$$
The proof follows then by the standard product rules of the determinant and  of matrices in block forms.
\end{proof}

To conclude we analyze the boundary. Putting together the next Proposition and the previous results, 
Theorem \ref{T:1} is proved.

\begin{proposition}
If $C$ is the boundary of $U$ for the trace topology of $\R^{k}$, then $\mm(C) = 0$.
\end{proposition}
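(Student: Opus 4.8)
The plan is to argue by contradiction using the non-degeneracy condition once more: if $C$ carried positive mass we could contract a compact piece of it towards an interior point, landing inside $U$, and then reach a contradiction with Proposition \ref{P:non-degenerateinterior} and the classical fact that the topological boundary of a convex subset of $\R^{k}$ is Lebesgue-negligible.

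First I would record two preliminary facts. Since $\Omega$ is convex (Lemma \ref{L:convex}) and spans $\R^{k}$ by minimality of $k$, its interior $U=\mathrm{int}_{\R^{k}}(\Omega)$ is non-empty; and since $\Omega=\spt(\mm)$ is closed, $C=\Omega\setminus U=\partial_{\R^{k}}\Omega$. Now suppose, for contradiction, that $\mm(C)>0$. By inner regularity of the Radon measure $\mm$ pick a compact set $A\subset C$ with $\mm(A)>0$, and fix any point $o\in U$. By the line-segment principle for convex sets (the segment joining a point of a convex set to a point of its relative interior lies, except possibly for the endpoint in the set, in the relative interior), for every $y\in A\subset\Omega$ and every $t\in(0,1)$ the point $(1-t)y+to$ lies in $U$; hence $A_{t,o}\subset U$ for all $t\in(0,1)$, with $A_{t,o}$ as in \eqref{E:evolution}. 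Since $\mm$ is non-degenerate and $A$ is compact with $\mm(A)>0$, \eqref{E:nondegenerate} gives $\mm(A_{t,o})>0$ for all $t\in[0,1)$, in particular for $t\in(0,1)$.

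On the other hand, $A\subset C=\partial_{\R^{k}}\Omega$, and the topological boundary of a convex subset of $\R^{k}$ has $\L^{k}$-measure zero, so $\L^{k}(A)=0$. As $A_{t,o}=(1-t)A+to$ is the image of $A$ under an affine similarity of $\R^{k}$, we get $\L^{k}(A_{t,o})=(1-t)^{k}\L^{k}(A)=0$. Since $A_{t,o}\subset U$ and $\mm\llcorner_{U}\ll\L^{k}$ by Proposition \ref{P:non-degenerateinterior}, this forces $\mm(A_{t,o})=0$ for every $t\in(0,1)$, contradicting the previous paragraph. Therefore $\mm(C)=0$. I do not expect a genuine obstacle here: the only points requiring a little care are the convexity statement that $A_{t,o}\subset U$ (using that $o$ is interior and $\Omega$ is convex and closed) and the standard fact $\L^{k}(\partial_{\R^{k}}\Omega)=0$ for convex $\Omega$, both of which can simply be cited.
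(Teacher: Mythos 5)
Your proof is correct and follows essentially the same route as the paper: contract the boundary towards an interior point, use non-degeneracy to get positive $\mm$-measure, and contrast this with the Lebesgue-negligibility of the boundary of a convex set together with Proposition \ref{P:non-degenerateinterior}. Your passage to a compact subset $A\subset C$ via inner regularity is a minor (and welcome) technical refinement, since the non-degeneracy condition is stated for compact sets, but it does not change the substance of the argument.
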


\begin{proof}
Since $\Omega$ is convex, there exists $o \in U$ so that $C_{t,o} \subset U$ for all $t \in (0,1)$, where $C_{t,o}$ is defined as in \eqref{E:evolution}.
Suppose now by contradiction that $\mm(C)>0$. Then by non-degeneracy 
\[
\mm(C_{t,o}) >0, \quad \forall\, t \in (0,1),
\]
and therefore from Proposition \ref{P:non-degenerateinterior}, $\mathcal{L}^{k}(C_{t,o}) >0$ for all $t \in (0,1)$. But on the other hand, $C$ is the boundary of a convex set $U$ in $\R^{k}$,  thus $\mathcal{L}^{k}(C) = 0$; therefore, 
since the map
\[
C \ni x \mapsto T_{t} (x) : = (1-t)x + t o
\]
is a diffeomorphism, it follows that $\mathcal{L}^{k}(C_{t,o}) = 0$. This gives a contradiction and the claim follows.
\end{proof}

\medskip


\section{General properties of $\MCP$}

This Section serves as a general picture of the properties of m.m.s. verifying $\MCP$ and it 
is divided into two parts. In the first one we analyze the geodesic convexity of the support of the measure,  in the second one we investigate the implications of $\MCP$ on the extendability  of geodesics.

So again we recall that $(X,\sfd, \mm)$ is a metric measure space; i.e.  $(X,\sfd)$ is a   geodesic, proper, complete and separable metric space and $\mm$ is a positive Radon measure on $X$.


The quantitative dependance of $\MCP$ with respect to lower curvature bound $K$ and the dimension upper bound $N$ will not play any role in our proofs,
we prefer therefore to consider the following \emph{qualitative} form of $\MCP$. 

\begin{definition}\label{D:qualitative}
Let $(X, \sfd, \mm)$ be a m.m.s. as before, denote with $\Omega:=\spt (\mm)$ and let $\bar{x}\in X$. The triple $(X,\sfd, \mm)$ verifies the \emph{qualitative} $\MCP$ 
if and only if there exist a sequence $R_j\uparrow +\infty$  and  continuous functions $f_j:[0,1]\to \R^+$ with $f_j(0)=1$,  $f_j(1)=0$ and $f_j(t)>0$ for any $t \in [0,1)$, $j \in \N$, 
so that for every $o \in \Omega \cap B_{R_j}(\bar{x})$:  
\begin{equation}\label{eq:QualMCP}
  \mm(A_{t,o}) \geq  f_j(t) \; \mm (A ), \qquad \forall \, t \in [0,1),
\end{equation}
where 
\begin{equation}\label{E:d-evolution}
A_{t,o} : = \{ \gamma_{t} : \gamma \in \Geo(o), \gamma_{0} \in A \,\},
\end{equation}
and $A \subset \Omega \cap B_{R_j}(\bar{x})$ is any Borel set.
\end{definition}

Clearly the property of satisfying the qualitative $\MCP$ condition  above is independent of the center $\bar{x}$, just by changing the sequence of radii $R_j\uparrow +\infty$.

At this level of generality not so much can be said on the geometry of $\Omega$.
We therefore introduce the next stronger version of the qualitative measure contraction property.

\begin{definition}[Strong Qualitative $\MCP$]\label{D:strongMCP}
A triple $(\Omega,\sfd,\mm)$ as above satisfies the \emph{strong qualitative} $\MCP$ provided 
there exist a sequence $R_j\uparrow +\infty$  and  continuous functions $f_j:[0,1]\to \R^+$ with $f_j(0)=1$,  $f_j(1)=0$ and $f_j(t)>0$ for any $t \in [0,1)$, $j \in \N$, 
so that for every $o \in \Omega \cap B_{R_j}(\bar{x})$:  
\begin{equation}\label{eq:StrongQualMCP}
  \mm(A) \geq  f_j(t) \; \mm\left( {\rm e}_{0} \left( {\rm e}_{t}^{-1}(A) \cap \Geo(o) \right) \right), \qquad \forall \, t \in [0,1),
\end{equation}
where $A \subset \Omega \cap B_{R_j}(\bar{x})$ is any Borel set. 
\end{definition}

\medskip

\begin{remark} \label{rem:strongMCP}
Few comments on Definition \ref{D:strongMCP} are in order. 
The adjective ``strong'' is due to the fact that Definition \ref{D:strongMCP} is equivalent to impose \eqref{eq:QualMCP}
replacing $A_{t,o}$ with ${\rm e}_{t}(H)$, for all the $H \subset \Geo(o)$ so that ${\rm e}_{0}(H) = A$.
Hence Definition \ref{D:strongMCP} imposes the evolution estimate along the support of any $W_{2}$-geodesic connecting $\mm(A)^{-1} \mm\llcorner_{A}$ to $\delta_{o}$.
It follows therefore that strong qualitative $\MCP$ implies qualitative $\MCP$.

On the other hand, if $(X,\sfd, \mm)$ satisfies the qualitative $\MCP$ 
and it is essentially non-branching (see \eqref{E:branching}), then it
verifies the strong qualitative $\MCP$: 

Fix any $o \in \Omega$ and  $A\in B_{R_j}(\bar{x})$, and let $H = {\rm e}_{t}^{-1}(A) \cap \Geo(o)$.    Denote with $\hat X$ the set $X \setminus D(o)$ and recall that,  by the essentially non-branching condition, one has  $\mm(D(o)) =0$.
The qualitative $\MCP$ implies that 
\begin{eqnarray}
\mm(A)&\geq& \mm \left({\rm e}_{t}(H)\right) \geq \mm \left(  \left({\rm e}_{0}(H) \cap \hat X\right)_{t,o}  \right) \geq f_j(t) \, \mm( {\rm e}_{0}(H) \cap \hat X) = f_j(t) \, \mm( {\rm e}_{0}(H) )   \nonumber \\
&=&  f_j(t)\, \mm({\rm e}_{0}(  {\rm e}_{t}^{-1}(A)  \cap \Geo(o)  ) ),  \nonumber
\end{eqnarray}
which is  \eqref{eq:StrongQualMCP}.

\end{remark}

All the constructions that we will consider will be then used for spaces verifying the essentially non-branching property.
Hence we prefer to not pursue the most general result and to prove the next two statements under the strong version of the qualitative $\MCP$. 
At the moment we do not know if they hold under the general qualitative $\MCP$.

The terminology ``strong'' is justified by the next  result.

\begin{lemma}\label{L:strongeodesic}
Let $(\Omega,\sfd,\mm)$ be a metric measure space verifying the strong qualitative $\MCP$ condition  \eqref{eq:StrongQualMCP}. 
Then for any $A \subset \Omega$ measurable set with $\mm(A) \in (0,\infty)$, $o \in \Omega$ and for any geodesic 
$(\mu_{t})_{t\in [0,1]} \subset \mathcal{P}_{2}(X)$ so that 
$$
\mu_{0} = \frac{1}{\mm(A)} \mm \llcorner_{A} ,\qquad \mu_{1} = \delta_{o},
$$
it holds $\mu_{t}\ll \mm$ for all $t \in [0,1)$.
\end{lemma}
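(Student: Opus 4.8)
The plan is to argue by contradiction, exploiting the strong qualitative $\MCP$ estimate \eqref{eq:StrongQualMCP} against the evolved set. Suppose that for some $\bar t\in(0,1)$ we have $\mu_{\bar t}\not\ll\mm$. By the Radon--Nikodym decomposition of $\mu_{\bar t}$ with respect to $\mm$, there is a Borel set $B\subset\Omega$ with $\mm(B)=0$ but $\mu_{\bar t}(B)>0$; moreover, since $\mm$ is $\sigma$-finite and Radon, we may intersect $B$ with a suitable ball $B_{R_j}(\bar x)$ (for $j$ large) so that $B\subset \Omega\cap B_{R_j}(\bar x)$ and still $\mu_{\bar t}(B)>0$, $\mm(B)=0$. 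Since $(\mu_t)$ is a $W_2$-geodesic from $\mm(A)^{-1}\mm\llcorner_A$ to $\delta_o$, lift it to $\nu\in\Opt(\mu_0,\mu_1)$ with $(\ee_t)_\sharp\nu=\mu_t$; in particular $\nu$ is concentrated on $\Geo(o)$ (up to relabelling, every geodesic in the support ends at $o$), and $(\ee_0)_\sharp\nu=\mm(A)^{-1}\mm\llcorner_A$.

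Next I would set $H:=\ee_{\bar t}^{-1}(B)\cap\Geo(o)$ and consider $A':=\ee_0(H)$, the set of starting points of those geodesics passing through $B$ at time $\bar t$. Since $\nu(H)=\mu_{\bar t}(B)>0$ and $(\ee_0)_\sharp\nu$ is a multiple of $\mm\llcorner_A$, it follows that $\mm(A')>0$ (here one uses that $\ee_0(H)\supset$ a set of positive $(\ee_0)_\sharp\nu$-measure, and that this pushforward is absolutely continuous with respect to $\mm\llcorner_A$). Now apply \eqref{eq:StrongQualMCP} with this $A'$ in the role of $A$: taking the set $H$ (which satisfies $\ee_0(H)=A'$) and using Remark \ref{rem:strongMCP}, the strong qualitative $\MCP$ gives, for $t=\bar t$,
\[
\mm(A')\ \geq\ f_j(\bar t)\,\mm\bigl(\ee_{\bar t}(H)\bigr).
\]
But $\ee_{\bar t}(H)\subset B$, hence $\mm(\ee_{\bar t}(H))\le\mm(B)=0$; since $f_j(\bar t)>0$ this forces $\mm(A')=0$, contradicting $\mm(A')>0$. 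This contradiction shows $\mu_t\ll\mm$ for all $t\in[0,1)$.

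The main obstacle I anticipate is the measure-theoretic bookkeeping needed to pass from ``$\mu_{\bar t}$ charges a $\mm$-null set'' to ``the set of starting points of the relevant geodesics has positive $\mm$-measure.'' This requires care with the measurability of $\ee_{\bar t}^{-1}(B)\cap\Geo(o)$ (using that $\ee_{\bar t}$ is continuous and $B$ Borel, so $H$ is Borel in $\Geo$), with the fact that $\ee_0(H)$ is analytic hence universally measurable (so $\mm(\ee_0(H))$ makes sense, or one replaces it by a Borel hull), and with the identity $(\ee_0)_\sharp(\nu\llcorner_H)\ll\mm\llcorner_A$, which yields $(\ee_0)_\sharp(\nu\llcorner_H)(\ee_0(H))>0\Rightarrow\mm(\ee_0(H)\cap A)>0$. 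A secondary subtlety is ensuring one may restrict to a single ball $B_{R_j}(\bar x)$ where \eqref{eq:StrongQualMCP} is available with a fixed $f_j$; this is handled by the $\sigma$-finiteness of $\mm$ and the exhaustion $R_j\uparrow+\infty$. Once these points are in place the contradiction is immediate from the displayed inequality.
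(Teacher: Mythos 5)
Your strategy is sound and is essentially the paper's argument run in contrapositive form: the paper fixes a compact $\mm$-null set $K$ at time $t$, applies \eqref{eq:StrongQualMCP} to $K$ to conclude that the set of starting points $\ee_{0}\bigl(\ee_{t}^{-1}(K)\cap\Geo(o)\bigr)$ is $\mm$-null, and then pulls back through $\nu$ exactly as you do to get $\mu_t(K)=0$. However, your key displayed inequality is written in the wrong direction, and the deduction you draw from it does not follow. In \eqref{eq:StrongQualMCP} the Borel set denoted $A$ is the set \emph{at time $t$}, and the inequality bounds the $\mm$-measure of the time-$t$ set from below by $f_j(t)$ times the $\mm$-measure of the set of \emph{starting points}; equivalently, in the form of Remark \ref{rem:strongMCP}, $\mm(\ee_{t}(H))\geq f_j(t)\,\mm(\ee_0(H))$ for $H\subset\Geo(o)$. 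Applied to your $H=\ee_{\bar t}^{-1}(B)\cap\Geo(o)$ (for which $\ee_{\bar t}(H)\subset B$) this gives
\[
0=\mm(B)\ \geq\ \mm\bigl(\ee_{\bar t}(H)\bigr)\ \geq\ f_j(\bar t)\,\mm\bigl(\ee_0(H)\bigr)=f_j(\bar t)\,\mm(A'),
\]
which is what forces $\mm(A')=0$ and closes the contradiction. What you wrote, namely $\mm(A')\geq f_j(\bar t)\,\mm(\ee_{\bar t}(H))$, is the reverse inequality: combined with $\mm(\ee_{\bar t}(H))=0$ it yields only the vacuous statement $\mm(A')\geq 0$ and does not force $\mm(A')=0$. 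Once the inequality is corrected, the rest of your argument — the identification $\nu(H)=\mu_{\bar t}(B)$, the step $\nu(H)\leq\nu\bigl(\ee_0^{-1}(\ee_0(H))\bigr)=\mm(A)^{-1}\mm(A\cap A')$ giving $\mm(A')>0$, the measurability remarks, and the localization of $B$ inside some $B_{R_j}(\bar x)$ — is fine and matches the measure-theoretic bookkeeping in the paper's proof.
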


\begin{proof}
Fix $A\subset \Omega$ measurable with $\mm(A)\in(0, \infty)$, a point $o \in \Omega$ and let $(\mu_{t})_{t\in [0,1]} \subset \mathcal{P}_{2}(X)$ be an arbitrary $W_2$-geodesic from $\mu_{0} := \frac{1}{\mm(A)} \mm \llcorner_{A}$ to $\mu_{1} := \delta_{o}$. 
Since $(X,\sfd)$ is a geodesic space, we can associate a measure $\nu \in \mathcal{P}(\Geo)$ so that 
for any $t \in [0,1]$ it holds $\left({\rm e}_{t}\right)_{\sharp} \nu = \mu_{t}$; finally denote with $H \subset \Geo(o)$ a set where $\nu$ is concentrated.

Fix $t \in [0,1)$ and $K\subset \Omega$ compact subset so that $\mm(K) =0$. 
Then, by \eqref{eq:StrongQualMCP}, we have 
$$
\mm \left({\rm e}_{0} \left( {\rm e}_{t}^{-1}(K) \cap \Geo(o) \right) \right) = 0,
$$
and consequently 
\begin{align*}
\mu_t(K)&=({\rm e}_{t})_{\sharp}\, \nu(K)  =  \nu  \left( {\rm e}_{t}^{-1}(K) \cap \Geo(o)   \right)  \crcr
&~ \leq \nu \left( {\rm e}_{0}^{-1} \left( {\rm e}_{0} \left( {\rm e}_{t}^{-1}(K) \cap \Geo(o)   \right) \right)  \right)  \crcr
&~ =  ({\rm e}_{0})_{\sharp}\, \nu \left( {\rm e}_{0} \left( {\rm e}_{t}^{-1}(K) \cap \Geo(o)   \right) \right) \crcr
&~ =  \frac{1}{\mm(A)} \, \mm \llcorner_{A}  \left( {\rm e}_{0} \left( {\rm e}_{t}^{-1}(K) \cap \Geo(o)   \right) \right) \crcr
&~ = 0.
\end{align*}
\end{proof}

\noindent
In analogy of what we proved in $\R^d$, also in this framework one has that the support $\Omega$ of $\mm$ is convex.

\begin{proposition}\label{Prop:d-convex}
Let $(X,\sfd,\mm)$ be a m.m.s. and $\Omega:=\spt (\mm)$ is so that the triple $(\Omega,\sfd, \mm)$ satisfies the strong qualitative $\MCP$ condition \eqref{eq:StrongQualMCP}.  
Then  $\Omega$ is a weakly geodesically convex subset, i.e for every $x_0, x_1 \in \Omega$ there exists $\hat{\gamma}\in \Geo$ with $\hat{\gamma}_i=x_i$, $i=0,1$, such that $\hat{\gamma}([0,1])\subset \Omega$.  
\end{proposition}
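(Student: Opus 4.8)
The plan is to follow the strategy of Lemma~\ref{L:convex}, replacing the explicit estimate on evolved balls (which in $\R^d$ exploited that segments depend affinely on their endpoints) by a soft compactness argument, and using Lemma~\ref{L:strongeodesic} to ensure that the geodesics we produce never leave $\Omega$.

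First I would fix $x_0,x_1\in\Omega=\spt(\mm)$ and, for each $\delta>0$, set $A_\delta:=\bar B_\delta(x_1)\cap\Omega$. This set is compact by properness and satisfies $\mm(A_\delta)\in(0,\infty)$, since $\mm$ is Radon and $x_1\in\spt(\mm)$. I would then choose any $W_2$-geodesic $(\mu_t)_{t\in[0,1]}$ from $\mu_0:=\mm(A_\delta)^{-1}\,\mm\llcorner_{A_\delta}$ to $\mu_1:=\delta_{x_0}$ and lift it to $\nu_\delta\in\mathcal{P}(\Geo)$ with $(\ee_t)_\sharp\nu_\delta=\mu_t$; by construction $\nu_\delta$ is concentrated on $\{\gamma\in\Geo:\gamma_0\in A_\delta,\ \gamma_1=x_0\}$. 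Lemma~\ref{L:strongeodesic} then gives $\mu_t\ll\mm$, hence $\mu_t(\Omega)=1$, for every $t\in[0,1)$ (and trivially for $t=1$). A Fubini argument on $\bigl([0,1]\times\Geo,\ \mathcal{L}^1\otimes\nu_\delta\bigr)$ — the set $\{(t,\gamma):\gamma_t\notin\Omega\}$ is Borel because $(t,\gamma)\mapsto\gamma_t$ is continuous and $\Omega$ is closed, and it has null measure — shows that $\nu_\delta$-a.e.\ $\gamma$ satisfies $\gamma_t\in\Omega$ for $\mathcal{L}^1$-a.e.\ $t$, hence, by continuity of $\gamma$ and closedness of $\Omega$, for every $t\in[0,1]$. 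Intersecting with the two full-measure events above, I obtain a single geodesic $\gamma^\delta\in\Geo$ with $\gamma^\delta_0\in\bar B_\delta(x_1)$, $\gamma^\delta_1=x_0$, and $\gamma^\delta([0,1])\subset\Omega$.

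Finally I would let $\delta=1/n\to0$: the geodesics $\gamma^n$ have images contained in the compact ball $\bar B_{\sfd(x_0,x_1)+1}(x_0)$ and are uniformly Lipschitz, so by the Arzel\`a--Ascoli theorem a subsequence converges uniformly to a geodesic $\hat\gamma$ with $\hat\gamma_0=x_1$ (since $\gamma^n_0\to x_1$), $\hat\gamma_1=x_0$, and $\hat\gamma_t=\lim_k\gamma^{n_k}_t\in\Omega$ for every $t$ because $\Omega$ is closed; thus $\hat\gamma([0,1])\subset\Omega$, which is the asserted weak geodesic convexity. The only delicate point is to obtain one geodesic that stays in $\Omega$ for \emph{all} $t$ simultaneously, rather than one geodesic per value of $t$: this is exactly what Lemma~\ref{L:strongeodesic} provides, namely absolute continuity of $\mu_t$ for all $t$ at once, and it is the reason the statement is proved under the strong form of the qualitative $\MCP$ — with only the pointwise-in-$t$ estimate \eqref{eq:QualMCP} one would need an additional diagonal extraction over a countable dense set of times.
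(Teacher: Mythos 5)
Your proof is correct, and while it shares the paper's starting point (optimally transport $\mm(A_\delta)^{-1}\mm\llcorner_{A_\delta}$ to the Dirac mass at the other endpoint and invoke Lemma \ref{L:strongeodesic} to keep the interpolants absolutely continuous w.r.t.\ $\mm$), the way you extract a single geodesic lying in $\Omega$ is genuinely different. The paper never isolates, for fixed $\delta$, one geodesic contained in $\Omega$: it first passes to a weak limit $\nu$ of the lifted plans $\nu^{\delta_n}$, picks a curve $\hat\gamma$ in $\spt(\nu)$, and uses lower semicontinuity on the open sets $B^{\Geo}_\alpha(\hat\gamma)$ together with Lemma \ref{L:strongeodesic} to conclude $\mm\bigl(B_\alpha(\hat\gamma_t)\bigr)>0$ for every $\alpha$ and $t$; this requires the auxiliary Hausdorff convergence $G_n\to H$ and the $W_2$-compactness of $\{\nu^{\delta_n}\}$. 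You instead run a Fubini argument on $\mathcal{L}^1\otimes\nu_\delta$ and exploit that $\{t:\gamma_t\notin\Omega\}$ is open, hence empty once it is Lebesgue-null, to get for \emph{each} $\delta$ a full-$\nu_\delta$-measure family of geodesics entirely inside $\Omega$; the limit $\delta\to 0$ is then a routine Arzel\`a--Ascoli extraction using only closedness of $\Omega$ and properness of $X$. Your route is arguably cleaner: it replaces the measure-theoretic limit of plans by a compactness argument on curves, and the openness-plus-null-implies-empty observation is exactly the right substitute for the paper's support-point/lower-semicontinuity step. Your closing remark is also on target: the strong form \eqref{eq:StrongQualMCP} is what gives $\mu_t\ll\mm$ for \emph{every} $W_2$-geodesic and every $t$ simultaneously, which is precisely what both arguments need. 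The only cosmetic point is that your $\hat\gamma$ runs from $x_1$ to $x_0$, so one should reverse the parametrization to match the statement.
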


\begin{proof}
Let $x,y\in \Omega$ and $\delta >0$. We consider the $\sfd^{2}$-optimal transportation problem between the marginal measures
$$
  \mu_{0} : = \frac{1}{\mm(B_{\delta}(x))}\mm \llcorner_{B_{\delta}(x)}, \qquad \mu_{1}: = \delta_{y}.
$$
The associated measure $\nu^{\delta} \in \mathcal{P}\left(\Geo \right)$ is so that $({\rm e}_{t})_\sharp (\nu^{\delta})_{t \in [0,1]}$ is a $W_{2}$-geodesic 
connecting $\mu_{0}$ to $\mu_{1}$. We now consider a sequence $\delta_{n} > 0$ converging to $0$ as $n \to \infty$. 
The associated measure $\nu^{\delta_{n}}$ gives measure one to the following set
$$
  G_{n} : = \{ \gamma \in \Geo : \gamma_{0} \in \overline{B}_{\delta_{n}}(x), \gamma_{1} = y \}.
$$
By compactness, there exists a subsequence, still denoted with $\delta_{n}$, so that $G_{n}\to H$ in the Hausdorff distance, with $H$ compact subset of 
$$
  G : = \{ \gamma \in \Geo : \gamma_{0} = x, \gamma_{1} = y \}.
$$
Also the set $\{ \nu^{\delta^{n}} \}_{n \in \N}\subset  \mathcal{P}\left(\Geo \right)$  is $W_2$-precompact, and extracting another subsequence we deduce the existence of $\nu \in  \mathcal{P}\left(\Geo \right)$ so that 
$\nu^{\delta_{n}} \to \nu$ in the $W_2$-metric. Moreover $\nu(H) = 1$ and
there exists $\hat \gamma \in H$ so that $\nu(B^{\Geo}_{\alpha}(\hat \gamma)) >0$ for all $\alpha > 0$, where 
$$ 
B^{\Geo}_{\alpha}(\hat \gamma) : = \{ \gamma \in \Geo :  \sup_{t \in [0,1]} \sfd(\gamma_{t},\hat \gamma_{t}) < \alpha \}.
$$
By lower-semicontinuity over open sets, 
$$
\liminf_{n\to \infty} \nu^{\delta_{n}} (B^{\Geo}_{\alpha}(\hat \gamma)  ) \geq \nu (B^{\Geo}_{\alpha}(\hat \gamma)  ) > 0.
$$
Hence for every $\alpha>0$ there exists $\delta_{n}$ so that 
$$
0< \nu^{\delta_{n}} (B^{\Geo}_{\alpha}(\hat \gamma)  ) \leq \left( ({\rm e}_{t})_\sharp \, \nu^{\delta_{n}} \right) \left( { \rm e}_{t} (B^{\Geo}_{\alpha}(\hat \gamma)) \right) \leq 
\left( ({\rm e}_{t})_\sharp \, \nu^{\delta_{n}} \right) \left( B_{\alpha}(\hat{\gamma}_{t}) \right),
$$
for all $t \in [0,1]$. 
By Lemma \ref{L:strongeodesic} we also  deduce that $\left(({\rm e}_{t})_\sharp \, \nu^{\delta_{n}}\right) \ll \mm\llcorner_{{\rm e}_{t}(G_{n})}$, for all $n \in \N$ and $t \in [0,1)$.
We can therefore summarize what  we obtained as follows: there exists $\hat \gamma \in \Geo(X)$ so that 
$$
   \mm\big(B_{\alpha}(\hat \gamma_{t})\big) > 0, \quad \text{for every $\alpha>0$ and $t\in [0,1)$}. 
$$
It follows that $\hat \gamma_{t} \in \spt(\mm)=\Omega$ for every $t \in [0,1]$, as desired.
\end{proof}

Thanks to Proposition \ref{Prop:d-convex}, $(\Omega,\sfd, \mm)$ is a geodesic metric measure space. 
We will identify with no loss of generality the whole space $X$ with the support $\Omega$ of the measure $\mm$. 
Therefore any future assumption on the smoothness or geometry of the m.m.s. $(X,\sfd, \mm)$ is indeed an assumption
that should be verified only on $\Omega$.


\subsection{Structure of geodesics under $\MCP$}
In this section we establish some structural properties of the geodesics implied by the qualitative $\MCP$ condition \eqref{eq:QualMCP} 
under the essentially non-branching condition. 
The next is a well-known property of m.m.s. enjoying $\MCP(K,N)$ and it is valid as 
well under the qualitative $\MCP$ (see for instance the proof of \cite[Lemma 3.1]{GMR}). 
\medskip

\noindent
\emph{Let the  m.m.s. $(X,\sfd, \mm)$ satisfy the qualitative $\MCP$.
Then for any $z \in X$, $\mm$-a.e. point of $X$ is an interior point of some geodesic ending in $z$:
$$
\mm \left( \left\{ \gamma_{t} : \gamma \in \Geo(z),\, t \in (0,1) \right\} \right) = 1.
$$
}

\noindent
The goal of this section is to improve such statement. For any point $z$ of the space,  consider the set of geodesics having $z$ as an interior point, called  \emph{rays through} $z$ and 
the evaluation of them,  called \emph{points through} $z$. They are defined in the following way:
$$
R(z) : = \{ \gamma \in \Geo : \gamma_{s} = z,\text{ for some } s \in (0,1) \}, \qquad {\rm e}_{(0,1)}(R(z)) : =\{ \gamma_{t} \colon \gamma \in R(z), \ t \in (0,1) \}.
$$
We can now state the result. 

%
%

\begin{proposition}\label{P:manycurves}
Let the m.m.s. $(X,\sfd,\mm)$  be  essentially non-branching and  satisfy the qualitative $\MCP$ condition \eqref{eq:QualMCP}. 
Then for $\mm$-a.e. $z \in X$ the following holds: 
\begin{equation}\label{E:manycurves}
\mm \left( X \setminus {\rm e}_{(0,1)}(R(z) ) \right)= 0.
\end{equation}
\end{proposition}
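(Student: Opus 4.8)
The plan is to deduce \eqref{E:manycurves} from the weaker, already–recalled fact that for every fixed $z$ one has $\mm$-a.e.\ point of $X$ lying \emph{strictly inside} some geodesic \emph{ending} in $z$, i.e.\ $\mm(\{\gamma_t : \gamma\in\Geo(z),\ t\in(0,1)\})=1$. The only gap between this and the claim is that a geodesic ending in $z$ with interior point $x$ is not yet a geodesic having $z$ as an \emph{interior} point; I need to extend it slightly beyond $z$. The key idea is to use a second application of the qualitative $\MCP$, now with a base point $w\neq z$, to produce $\mm$-almost every $z$ as the interior point of a geodesic ending in some $w$, and then to glue: concatenating a geodesic from $x$ through $z$ (ending in $z$) with a short piece of a geodesic from $z$ to $w$ produces, after the essentially non-branching condition rules out the bad behaviour, a genuine element of $R(z)$ whose evaluations cover $\mm$-a.e.\ point.

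First I would fix a countable dense set $\{w_n\}\subset X$ and, for each $n$, invoke the recalled property at the base point $w_n$: the set $G_n:=\{\gamma_s:\gamma\in\Geo(w_n),\ s\in(0,1)\}$ has full $\mm$-measure, hence so does $G:=\bigcap_n G_n$. Thus for $\mm$-a.e.\ $z$ there is, for every $n$, a geodesic $\sigma^{z,n}\in\Geo(w_n)$ with $z$ in its interior; in particular $z$ is an interior point of \emph{some} geodesic, but I still want the other half starting at an arbitrary point and passing through $z$. Second, I would apply the recalled property once more, this time at base point $z$ itself: for $\mm$-a.e.\ $z$ we also have $\mm(\{\gamma_t:\gamma\in\Geo(z),\ t\in(0,1)\})=1$. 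So for $\mm\otimes\mm$-a.e.\ pair $(x,z)$ there is $\gamma\in\Geo(z)$ with $\gamma_0=x$, $\gamma_t=x'$ for some $t\in(0,1)$, and $\gamma_1=z$; and there is a geodesic $\eta$ with $\eta_0=z$ extending past $z$ (take $\eta$ to be a short initial segment of $\sigma^{z,n}$ continued past $z$, reparametrised). Third, the essentially non-branching hypothesis is what makes the concatenation $\gamma\ast\eta$ an honest geodesic near $z$: the set $D(z)$ of backward-branching points through $z$, and the sets $A_\pm(z)$ from \eqref{eq:Apm0}, all have $\mm$-measure zero, so after discarding an $\mm$-null set of starting points the concatenation does not branch at $z$ and therefore lies in $\Geo$ with $z$ as a strict interior point, i.e.\ it belongs to $R(z)$. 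Its evaluation on $(0,1)$ contains the original $x$, so $\mm$-a.e.\ $x$ is in ${\rm e}_{(0,1)}(R(z))$.

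To organise this cleanly I would phrase the last step as a Fubini argument: let
\[
E:=\{(x,z)\in X\times X : x\notin {\rm e}_{(0,1)}(R(z))\},
\]
show $E$ is $\mm\otimes\mm$-measurable (using $\sigma$-compactness of $R(z)$, $\Geo(z)$ and of the branching sets, exactly as in Section~\ref{Ss:essentially}), and prove $\mm\otimes\mm(E)=0$ by fixing $x$ generic and running the above construction in the $z$-variable, invoking \eqref{eq:Apm0} and $\mm(D(z))=0$ to throw away the null set of $z$ where branching obstructs the gluing. Then a second application of Fubini gives $\mm(\{z : \mm(\{x : (x,z)\in E\})>0\})=0$, which is precisely \eqref{E:manycurves} for $\mm$-a.e.\ $z$.

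\textbf{Main obstacle.} The delicate point is the gluing: showing that the concatenation of ``$x$ through $z$ to $z$'' with ``$z$ beyond $z$'' is genuinely length-minimising in a neighbourhood of $z$, and that this persists for $\mm$-a.e.\ choice of $x$. Length-minimality of a concatenation of two geodesics sharing an endpoint is a local statement that can fail (one can turn a corner), so the argument must be arranged so that the second segment is chosen \emph{after} and \emph{compatibly with} the first — e.g.\ both are subsegments of one and the same long geodesic through $z$ — and then the essentially non-branching property is exactly what upgrades ``a geodesic through $z$ exists for a.e.\ pair'' to ``the geodesics ending at $z$ that carry full measure can all be extended through $z$ without branching.'' Making the measurability and the order of the two Fubini applications watertight, using only the $\sigma$-compactness facts recorded in Section~\ref{Ss:essentially}, is where most of the care goes.
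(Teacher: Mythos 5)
Your plan takes a genuinely different route from the paper's, and the step you yourself flag as ``the main obstacle'' is a real gap that the tools you invoke cannot close. The paper does not bootstrap from the recalled qualitative fact: it integrates the quantitative inequality \eqref{eq:QualMCP} over the base point, obtaining $\int_{B_r(x)} \mm\bigl((B_r(x))_{t,z}\bigr)\,\mm(dz) \ge f(t)\,\mm(B_r(x))^2$, rewrites this by Fubini as an integral in the evolved variable $w=\gamma_t$, and lets $t\to 0$ using the continuity of $f$ and $f(0)=1$. This yields, for $\mm$-a.e.\ $w$, a full-measure set of $z$ admitting a \emph{single} geodesic running $\gamma_0\to w\to z$ with $\gamma_0\in B_r(x)$ and $w$ at an interior time — so the extension of $[w,z]$ beyond $w$, in the correct direction, is built in — and the only residual issue (that $z$ sits at the endpoint) is disposed of via the disintegration of Proposition \ref{P:disintegration} (initial points of maximal geodesics are $\mm$-null). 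Your route instead tries to manufacture the two extensions separately and then glue.

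The gap is precisely the gluing. Essential non-branching does not make the concatenation of $\gamma$ (arriving at $z$ from the direction of $x$) with $\eta$ (a short piece of $\sigma^{z,n}$, which leaves $z$ towards an unrelated point $w_n$ of your dense set) into a geodesic: minimality of $\gamma\ast\eta$ requires the metric alignment $\sfd(\gamma_0,\eta_1)=\sfd(\gamma_0,z)+\sfd(z,\eta_1)$, which is a statement about directions at $z$, not about branching, and the null sets $D(z)$, $A_{+}(z)$, $A_{-}(z)$ of \eqref{eq:Apm0} say nothing about it. Concretely, on a flat torus (non-branching, $\MCP(0,2)$ with Lebesgue measure) take $\gamma$ a horizontal minimizing segment ending at $z$ and $\eta$ a short vertical segment issuing from $z$: the concatenation turns a corner and is not a geodesic, and no discarding of starting points repairs this; even two minimizing segments of the same closed geodesic that overlap on a nondegenerate arc can have a non-minimizing union, so ``no branching at $z$'' does not imply ``the concatenation minimizes.'' Since $\sigma^{z,n}$ is chosen with no relation to the direction from which $\gamma$ reaches $z$, your own suggested repair (arranging that both pieces be subsegments of one and the same geodesic) cannot be implemented from the ingredients you have; producing that single geodesic for $\mm\otimes\mm$-a.e.\ pair is exactly what the paper's $t\to 0$ argument accomplishes and what the purely qualitative recalled fact cannot.
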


\begin{proof}
We will prove the following: for any $x \in X$ and $r >0$, for $\mm$-a.e. $z \in B_{r}(x)$, it holds 
$$
\mm(B_{r}(x) \setminus \ee_{(0,1)}(R(z))) = 0. 
$$
So let us fix $x \in X$ and $r > 0$. We also consider the function $f = f_{j}$ given by the qualitative $\MCP$ condition associated to an open ball containing $B_{r}(x)$.
Then we have
$$
\int_{B_{r}(x)} \mm \left(  \left(B_{r}(x) \right)_{t,z}   \right) \mm(dz) \geq f(t) \, \mm(B_{r}(x))^{2}.
$$
In equivalent terms,
\begin{align*}
\mm(B_{r}(x))^{2} \, f(t)  	&~ \leq  \mm\otimes \mm \left( \left\{ (z,w) \in B_{r}(x) \times X : w = \gamma_{t}, \, \gamma \in \Geo(z), \, \gamma_{0} \in B_{r}(x)  \right\} \right) \crcr
	&~ = \int_{X} \mm\left( \left\{ z \in B_{r}(x) : w = \gamma_{t}, \, \gamma \in \Geo(z), \gamma_{0} \in B_{r}(x) \right\} \right) \mm(dw) \crcr
	&~ = \int_{B_{(1+2t)r}(x)} \mm\left( \left\{ z \in B_{r}(x) : w = \gamma_{t}, \, \gamma \in \Geo(z), \gamma_{0} \in B_{r}(x) \right\} \right) \mm(dw) \crcr
	&~ \leq \int_{B_{(1+2t)r}(x)} \mm\left( \left\{ z \in B_{r}(x) : w \in \gamma_{(0,1)}, \, \gamma \in \Geo(z), \gamma_{0} \in B_{r}(x) \right\} \right) \mm(dw).
\end{align*}
Taking the limit as $t \to 0$, since $f$ is continuous and  $f(0)=1$, we get that for $\mm$-a.e. $w \in B_{r}(x)$ 
$$
\mm \left( \left\{ z \in B_{r}(x) : w \in \gamma_{(0,1)}, \, \gamma \in \Geo(z), \, \gamma_{0} \in B_{r}(x) \right\} \right) =  \mm(B_{r}(x)).
$$
To conclude we prove that set in the left hand side of the previous identity 
coincides with  ${\rm e}_{(0,1)}(R(z))$, up to a set of $\mm$-measure zero. 
To this purpose it is enough to show that the set of initial points of maximal geodesics has $\mm$-measure zero; 
but this follows from Proposition \ref{P:disintegration} below, since $\mm(D(z))=0$ by the essential non-branching assumption.
\end{proof}

Proposition \ref{P:manycurves} has a nice consequence on the symmetric cut locus. 
Let us recall that the \emph{symmetric cut locus} $SC \subset X\times X$ is defined by
\begin{eqnarray}
SC &:=&X\times X\setminus\{ (x,y) \in X\times X \,:\, \exists\, \gamma\in \Geo , \, \exists\, s,t \in (0,1) \text{ such that } \gamma_{s}=x, \gamma_{t}=y\} \nonumber \\
      &=&  \{ (x,y) \in X\times X\,:\, x \in C(y) \text{ or } y \in C(x)\} \label{eq:defSC},
\end{eqnarray}
where, given $z \in X$, the \emph{cut locus} of $z$ denoted with $C(z) \subset X$ is defined by
\begin{equation}\label{eq:defCutLocus}
C(z):=X\setminus\{ x \in X \,:\, \exists \gamma\in \Geo, \, \exists t \in [0,1) \text{ such that } \gamma_{0}=z, \gamma_{t}=x\}.
\end{equation}
It was already well-known that qualitative $\MCP$ implies that $\mm(C(z))=0$, see for instance \eqref{eq:Apm0}. 
From Proposition \ref{P:manycurves}, once we observe that for every $z \in X$ it holds 
$$
\{z\}\times {\rm e}_{(0,1)}(R(z)) \subset X\times X \setminus SC,
$$
it is then possible to deduce the following property of $SC$. 

\begin{corollary}\label{cor:CutLocus}
Let the m.m.s. $(X,\sfd,\mm)$ be essentially non-branching and satisfy the qualitative $\MCP$. Then 
\begin{equation}\nonumber
\mm \otimes \mm \,(SC)=0 .
\end{equation} 
\end{corollary}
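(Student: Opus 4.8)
The plan is to deduce Corollary \ref{cor:CutLocus} from Proposition \ref{P:manycurves} by an application of Fubini's theorem, together with the observation recorded just before the statement that
\[
\{z\}\times {\rm e}_{(0,1)}(R(z)) \subset X\times X \setminus SC \qquad \text{for every } z \in X.
\]
First I would note that $SC$ is a Borel (indeed $\sigma$-compact complement, by properness) subset of $X\times X$, so that $\mm\otimes\mm(SC)$ is well defined and Fubini applies. For a fixed $z\in X$, the above inclusion gives that the slice $SC_z:=\{y\in X:(z,y)\in SC\}$ is contained in $X\setminus {\rm e}_{(0,1)}(R(z))$, hence
\[
\mm(SC_z)\le \mm\bigl(X\setminus {\rm e}_{(0,1)}(R(z))\bigr).
\]
By Proposition \ref{P:manycurves}, the right-hand side vanishes for $\mm$-a.e. $z\in X$; thus $\mm(SC_z)=0$ for $\mm$-a.e. $z$. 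Integrating in $z$ and using Fubini,
\[
\mm\otimes\mm(SC)=\int_X \mm(SC_z)\,\mm(dz)=0,
\]
which is the claim.

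There is essentially no obstacle here: the corollary is a direct slicing argument once Proposition \ref{P:manycurves} is in hand. The only two minor points to check are (i) the measurability of $SC$ and of the map $z\mapsto \mm(SC_z)$, which follow from the $\sigma$-compactness of $SC$ (and hence of each slice, and of the complement of each slice) guaranteed by the properness assumption on $(X,\sfd)$, together with the standard measurable-selection/monotone-class arguments underlying Fubini's theorem for Radon measures; and (ii) that Proposition \ref{P:manycurves} is applied for each $z$ in a set of full $\mm$-measure, which is exactly its conclusion \eqref{E:manycurves}. If one wishes to avoid any $\sigma$-finiteness subtlety with a general Radon measure $\mm$, one can first localize to balls $B_r(x)$, exactly as in the proof of Proposition \ref{P:manycurves}, prove $\mm\otimes\mm\bigl(SC\cap(B_r(x)\times B_r(x))\bigr)=0$ for every $x$ and $r$, and then conclude by a countable exhaustion of $X$ by such balls. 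No genuinely hard step is anticipated.
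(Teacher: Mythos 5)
Your proof is correct and is exactly the argument the paper intends: the corollary is deduced from Proposition \ref{P:manycurves} via the inclusion $\{z\}\times {\rm e}_{(0,1)}(R(z)) \subset X\times X\setminus SC$ and a Fubini slicing, which is what the paper sketches in the paragraph preceding the statement. Your additional remarks on measurability and $\sigma$-finiteness are harmless elaborations of the same route.
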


The next result is already present in literature, see for instance \cite{biacava:strettconv} where a more general case is considered. 
To keep presentation as self contained as possible, we include it  here  in a form adjusted to our framework.

\begin{proposition}[Disintegration with $\MCP$]\label{P:disintegration}
Let the m.m.s. $(X,\sfd,\mm)$ with $\mm(X)=1$ be essentially non-branching, and  satisfy the qualitative $\MCP$ condition \eqref{eq:QualMCP}. 
Fix any $z \in X$ and define $\hat X : = X \setminus \{D(z) \cup \{z\} \}$. Consider in $\hat X$ the equivalence relation $T$:
$$
(x,y) \in T  \iff \sfd(y,z) = \sfd(y,x) + \sfd(x,z), \textrm{ or }  \sfd(x,z) = \sfd(x,y) + \sfd(y,z),
$$
whose equivalence classes are maximal geodesics in $\Geo(z)$. 
Then there exists an $\mm$-measurable map $Q : \hat X \to \hat X$ and a Borel subset $K \subset \hat{X}$ with $\mm(K)=1$ so that 
$$
(x,y) \in T \iff Q(x) = Q(y) \qquad \textrm{ and } \qquad (x,Q(x)) \in T, \qquad \forall  x,y \in K.
$$
Moreover
$$
\mm = \mm \llcorner_{\hat X}\, = \, \int_{Q(\hat X) } \mm^{\alpha} q(d\alpha), \qquad 
\mm^{\alpha } \ll \mathcal{H}^{1}\llcorner_{\gamma^{\alpha}}, \quad q\textrm{-a.e.} \, \alpha \in Q(\hat X),
$$
where $q = (Q)_{\sharp} \, \mm \llcorner_{\hat X}$ is the quotient measure and $\gamma^{\alpha} \in \Geo(z)$ is the maximal geodesic containing $\alpha \in Q(\tilde{X})$. 
\end{proposition}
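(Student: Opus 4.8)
The plan is to realize the claimed disintegration as an instance of the Disintegration Theorem (Theorem \ref{T:disintr}) applied to the partition of $\hat X$ into the equivalence classes of $T$, and then to upgrade the measurability of the quotient map and prove absolute continuity of the conditional measures on each ray using the qualitative $\MCP$ and the non-branching estimates of Section \ref{Ss:essentially}. First I would check that $T$ is indeed an equivalence relation on $\hat X$: reflexivity and symmetry are immediate from the definition, while transitivity is exactly where one uses that the points of $\hat X$ are not forward- or backward-branching. Concretely, on $\mathcal{T}_{nb}(z)$ the relation ``lying on a common geodesic through $z$'' is transitive because two geodesics in $\Geo(z)$ passing through a common point $x \notin A_+(z)\cup A_-(z)$ must coincide up to $z$; this is where removing $D(z)$ (equivalently, restricting to a full-measure non-branching set $K$) is essential. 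By \eqref{eq:Apm0} and the essential non-branching hypothesis, $\mm(D(z))=0$, so there is a $\sigma$-compact $K\subset\hat X$ with $\mm(K)=1$ on which $T$ restricts to an equivalence relation whose classes are (portions of) maximal geodesics in $\Geo(z)$.

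Next I would produce the quotient map $Q$. The natural choice is to pick, measurably in $x$, a distinguished point on the maximal geodesic through $x$ — for instance the point at a fixed distance from $z$, or the ``midpoint'' of the ray — using a measurable selection theorem (Kuratowski--Ryll-Nardzewski), which applies since the graph of $T$ is Borel (it is cut out by the closed conditions defining $H$, $\Gamma$, $\Gamma^{-1}$) and the space is Polish. One then sets $q := Q_\sharp(\mm\llcorner_{\hat X})$ and invokes Theorem \ref{T:disintr} to get a strongly consistent disintegration $\mm = \int \mm^\alpha \, q(d\alpha)$ with $\mm^\alpha$ concentrated on $Q^{-1}(\alpha)$, i.e. on the geodesic $\gamma^\alpha$. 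The identities $(x,y)\in T \iff Q(x)=Q(y)$ and $(x,Q(x))\in T$ hold on $K$ by construction.

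The main obstacle, and the heart of the matter, is proving $\mm^\alpha \ll \mathcal{H}^1\llcorner_{\gamma^\alpha}$ for $q$-a.e. $\alpha$. The strategy I would follow is the standard one for $\MCP$-type disintegrations: fix a ray $\gamma^\alpha$ and use the contraction estimate \eqref{eq:QualMCP} with center $o = \gamma^\alpha_1$ (or a point of $\gamma^\alpha$ near $z$) to compare, for a set $A$ sitting on the ray, the measure of $A$ with the measure of its contraction $A_{t,o}$, which sweeps out a thinner and thinner tube around an initial segment; letting $t$ vary and integrating transversally via the conditional measures, one controls the one-dimensional density of $\mm^\alpha$ by the ratio of the functions $f_j$, showing it cannot charge $\mathcal{H}^1$-null sets. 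Since the statement says this is essentially \cite{biacava:strettconv}, I would either cite that reference for the density estimate on each equivalence class or reproduce the short argument: the point is that $\MCP$ forces each conditional measure along a transport ray to $z$ to have a density with respect to the length measure, a consequence of the monotonicity of $t\mapsto \mm(A_{t,o})/f_j(t)$-type quantities combined with a Vitali/differentiation argument on the line. Finally I would remark that the set of initial endpoints of maximal geodesics is $q$-null on each class (a single point, hence $\mathcal{H}^1$-negligible) and globally $\mm$-null, which is the statement invoked at the end of the proof of Proposition \ref{P:manycurves}.
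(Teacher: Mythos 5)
Your proposal follows essentially the same route as the paper: restrict to a full-measure $\sigma$-compact subset of $\hat X$ on which the classes of $T$ are maximal geodesics through $z$, construct a measurable cross-section $Q$, invoke the Disintegration Theorem for strong consistency, and obtain $\mm^{\alpha}\ll \H^{1}\llcorner_{\gamma^{\alpha}}$ by localizing the qualitative $\MCP$ contraction toward $z$ to each ray (which is invariant under the evolution), exactly as in \cite{biacava:strettconv}. The one point to be careful about is the cross-section: a bare Kuratowski--Ryll-Nardzewski selection of the multifunction $x\mapsto T(x)$ need not be constant on equivalence classes, which is precisely what strong consistency requires, so one must either use a canonical choice such as your ``midpoint of the ray'' (and then verify its measurability directly from the $\sigma$-compactness of $T$) or, as the paper does, cover $\hat X$ by saturated sets $W_{ijk}$ and apply the equivalence-respecting selection of \cite{biacava:strettconv}.
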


\medskip

\begin{proof}

\textbf{Step 1.} 
Since $\mm(\hat X) = 1$, we can assume the existence of $\hat K \subset \hat X$, $\sigma$-compact, so that $\mm(\hat K) = 1$; observe that the equivalence relation restricted to $\hat K$
$$
T = \{ (x,y) \in \hat K \times \hat K : \sfd(y,z) = \sfd(y,x) + \sfd(x,z) \} \cup \{  (x,y) \in K \times K : \sfd(x,z) = \sfd(x,y) + \sfd(y,z) \},
$$
is $\sigma$-compact as well. With a slight abuse of notation we set $T(\omega)$ to be the class of $\omega$, i.e. the maximal geodesic through $\omega$ and $z$.
Fix now a countable dense family of points $\{x_{i} \}_{i \in \N} \subset \hat K$ and consider for $i,j,k\in \N$
$$
W_{ijk} : = \{ w \in \bar B_{2^{-j}}(x_{i}) \cap \hat K :  L( T(w) \cap B_{2^{1-j}}(x_{i}) ) \leq 2^{-k} , \, L(T(w)) \geq 2^{2-k} \},
$$
where $L(\gamma)$ denotes the length of the geodesic $\gamma$.
It can be proved that $W_{ijk}$ form a countable cover of $\hat K$ of class $\mathcal{A}$, the $\sigma$-algebra generated by the analytic sets, 
see \cite[Lemma 4.1]{biacava:strettconv} for the details. As $W_{ijk} \in \mathcal{A}$, there exists $N_{ijk} \subset W_{ijk}$ with $\mm(N_{ijk}) = 0$ 
so that $W_{ijk} \setminus N_{ijk}$ is Borel. Then 
$$
H_{ijk} : = T^{-1}(W_{ijk} \setminus N_{ijk}) = P_{1} \left( \{ (v,w) \in T  :  w \in W_{ijk} \setminus N_{ijk}  \} \right)
$$
is a countable covering of $\hat K \setminus  \cup N_{ijk}$ into \emph{saturated} (sets of the form $T^{-1}(A)$) analytic sets.
Observing that the difference of two saturated sets is still saturated, we can find a countable disjoint family of sets 
$\{K_{ijk}\}$ made of $\mathcal{A}$ saturated sets so that 
$$
K_{ijk} \subset H_{ijk}, \qquad \cup_{ijk} K_{ijk} = \cup_{ijk} H_{ijk}.
$$
For ease of notation we fix a bijection from $\N \to \N^{3}$ and denote with $K_{n}$ the set $K_{i(n)j(n)k(n)}$.
We can now define the following multivalued map: 
$$
K_{n} \ni x \longmapsto F(x)  : = T(x) \cap \bar B_{2^{-(j(n))}}(x_{i(n)}),
$$
that is easily seen to be $\mathcal{A}$-measurable. Observe that $F(x)$ is a closed subset of $\hat K$. Then by \cite[Corollary 2.7]{biacava:strettconv} 
there exists $f_{n} : K_{n} \to \bar B_{2^{-(j(n))}}(x_{i(n)})$ $\mathcal{A}$-measurable so that 
$$
(x,f_{n}(x)) \in T, \qquad  (x,y) \in T \iff f_{n}(x) = f_{n}(y).
$$
We can repeat the same procedure for any $m\in \N$ and then define $Q(x) : = f_{n}(x)$ for all $x \in K_{n}$ and arbitrarily outside of $K :  = \cup_{n} K_{n}$.
As all $K_{n}$ are disjoint and saturated, the definition of $Q$ is well posed and as $\hat X \setminus K$ has $\mm$-measure $0$, 
$Q$ is $\mm$-measurable and the first part of the claim follows. 

\textbf{Step 2.} 
The existence of an $\mm$-measurable map $Q$ obtained before is in fact equivalent to the strong consistency of the disintegration, see 
Proposition 4.4 of \cite{biacava:strettconv} and subsequent discussions.
Hence 
$$
\mm = \mm \llcorner_{\hat X}\, = \, \int_{Q(\hat X) } \mm^{\alpha} q(d\alpha), \qquad \mm^{\alpha}(\gamma^{\alpha}) = 1, \quad q\textrm{-a.e.} \, \alpha \in Q(\hat X),
$$
where $q = (Q)_{\sharp} \, \mm \llcorner_{\hat X}$ is the quotient measure and $\gamma^{\alpha} \in \Geo(z)$ is the unique maximal geodesic containing $\alpha$. 
To obtain that $q$-a.e. $\mm^{\alpha}$ is absolutely continuous with respect to  $\H^{1}\llcorner_{\gamma^{\alpha}}$ it is enough 
to write the qualitative $\MCP$ between any Borel set and the fixed point $z \in X$. Then the equivalence classes are invariant sets for the evolution and, for $q$-a.e. $\alpha$,   each 
measure $\mm^{\alpha}$ has to satisfy a non-degeneracy property of the same type of the one introduced in Section \ref{S:Rn}. Hence the claim follows (for the details of this second part of the proof, see \cite[Theorem 5.7]{biacava:strettconv}).
\end{proof}


\medskip
\medskip

\section{Reference measures}

We are now ready to define what is for us a reference measure. Recall that $(X,\sfd)$ is a geodesic, proper, complete and separable metric space 
even if the next definition will make sense in a general complete and separable metric space.
During this Section all the assumptions regarding finiteness of measures are dropped. Denote with $\M^+(X)$ the space of positive Radon measures.
%

\begin{definition}\label{D:referencemeasure}
A positive Radon measure  $\mu \in \M^+(X)$ is a \emph{reference measure for $(X,\sfd)$} provided it is non-zero, and
for $\mu$-a.e. $z \in X$ there exists $\pi^{z} \in \mathcal{M}^+(X \times X)$ so that 
\begin{equation}\label{E:reverse}
(P_{1})_{\sharp}\,\pi^{z} = \mu, \qquad \pi^{z}  (X\times X \setminus H(z))=0,  \qquad (P_{2})_{\sharp}\, \pi^{z} \ll \mu, 
\end{equation}
where $P_{i} : X\times X \to X$ is the projection on the $i$-th component, for $i =1,2$ and $H(z)$ is defined in \eqref{E:couples}.
The measure $\pi^{z}$ will be called  \emph{inversion plan}.
\end{definition}
\noindent
Let us briefly observe that, since $H(z)\subset X \times X$ is closed, clearly the requirement   $\pi^{z}  (X\times X \setminus H(z))=0$ is equivalent to $\spt (\pi^z)\subset H(z)$.
Moreover we will use the following notation 
\begin{equation}\label{E:inversionpoints}
\mathsf{Ip} (\mu) : = \{ z\in X : \, \exists \, \pi^{z}\, \textrm{inversion plan}\},
\end{equation}
to denote the set of \emph{Inversion points of} $\mu$, those point where an inversion plan exists.

\begin{remark}[Inversion points are regular points]\label{rem:Inversion}
Let us remark that the condition  for a point $p \in X$ to be an inversion point   is strictly related to the regularity of the space $(X,\sfd)$ at $p$. Indeed if $(X,\sfd)$ has a conical singularity at $p$ then clearly $p$ cannot be an inversion point (unless $\mm(\{p\})>0$) since all geodesics end to minimize length once they cross $p$. Indeed, since   $H(p)\subset X\times \{p\} \cup \{p\}\cup X$, if $\pi \in \M^+(X)$ with $ \pi (X\times X \setminus H(p))=0$ and $(P_1)_\sharp(\pi)=\mm$, then $(P_2)_\sharp(\pi)= \mm(X) \, \delta_p$.  

On the other hand in Section \ref{S:InvAlex} we will prove that $\H^n$-a.e. regular point in an 
Alexandrov space with lower curvature bound is an inversion point (for the precise statement see Theorem \ref{T:Alexandrov-general}).  
\end{remark}
%
%
%

%
%
%
%
%
%
%
%
%
%

\medskip
In order to have a clear statement for the main result of this section, 
we drop the identification between the ambient space $X$ and the support of $\mm$, $\Omega$. 
Notice that, once we assume $(X,\sfd,\mm)$ to be essentially non-branching and satisfy the qualitative $\mathsf{MCP}$ condition \eqref{eq:QualMCP},  
then by Proposition \ref{Prop:d-convex} we have that  $\Omega = \spt(\mm)$ is a geodesic space.

\begin{theorem}\label{T:metric}
Let $(X,\sfd,\mm)$ be an essentially non-branching m.m.s. that verifies the qualitative $\mathsf{MCP}$ condition  \eqref{eq:QualMCP}.
Assume the existence of a reference measure $\mu$ for $(\Omega,\sfd)$, where $\Omega= \spt (\mm)$,
so that $(\Omega, \sfd, \mu)$ verifies the qualitative $\MCP$  condition  \eqref{eq:QualMCP} and it is essentially non-branching. \\
\noindent
If $\mm ( X \setminus \mathsf{Ip}(\mu) ) = 0$, then
$$
\mm \ll \mu.
$$
\end{theorem}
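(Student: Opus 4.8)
The plan is to use the disintegration machinery of Proposition~\ref{P:disintegration} for both measures, transfer the absolute continuity along one-dimensional geodesics from $\mu$ to $\mm$, and then glue these one-dimensional statements together using the inversion plan. First I would fix a point $z\in\Omega$ that is simultaneously an inversion point of $\mu$ (so that an inversion plan $\pi^z$ exists with $(P_1)_\sharp\pi^z=\mu$, $\spt(\pi^z)\subset H(z)$, $(P_2)_\sharp\pi^z\ll\mu$) and a point relative to which the disintegration of Proposition~\ref{P:disintegration} is available for the essentially non-branching $\MCP$-space $(\Omega,\sfd,\mm)$. The assumption $\mm(X\setminus\mathsf{Ip}(\mu))=0$ guarantees that $\mu$-suitable inversion points carry full $\mm$-mass, and Proposition~\ref{P:manycurves} (or \eqref{eq:Apm0}) lets us also demand that $\mm$-a.e. point lies on a maximal geodesic through $z$; choosing such a $z$ in the intersection of these full-measure sets is where a little care is needed. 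Having fixed $z$, write the disintegration of $\mm$ along the rays $\Geo(z)$: $\mm=\int \mm^\alpha\, q(d\alpha)$ with $\mm^\alpha\ll\H^1\llcorner_{\gamma^\alpha}$ for $q$-a.e.\ $\alpha$.

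The core geometric idea is that along any single ray $\gamma^\alpha$ through $z$, absolute continuity with respect to $\H^1$ is an intrinsic one-dimensional property of the metric, so if we can show that $\mu$ also disintegrates into measures absolutely continuous with respect to $\H^1$ on the \emph{same} rays, and that $\mu$ has full (positive) one-dimensional mass on essentially all of them, then a Radon--Nikodym argument fibre-by-fibre gives $\mm^\alpha\ll\mu^\alpha$, and integrating over $q$ yields $\mm\ll\mu$ provided the quotient measures are compatible. Here is where the inversion plan enters: $\pi^z$ is concentrated on $H(z)$, i.e.\ on pairs $(x,y)$ with $\sfd(x,y)=\sfd(x,z)+\sfd(z,y)$, which means $x,y$ and $z$ lie on a common geodesic with $z$ \emph{between} $x$ and $y$. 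Thus the map $x\mapsto y$ carried by $\pi^z$ reflects each point $x$ through $z$ along its ray. Because $(P_1)_\sharp\pi^z=\mu$ and $(P_2)_\sharp\pi^z\ll\mu$, this inversion sends $\mu$ into something $\ll\mu$ along each ray; combined with the $\MCP$ contraction estimate for $\mu$ toward the pole $z$ (which forces $\mu^\alpha$ to be non-degenerate in the sense of Section~\ref{S:Rn}, hence $\ll\H^1$, exactly as in the proof of Proposition~\ref{P:disintegration}), one gets that $\mu^\alpha$ is comparable to $\H^1\llcorner_{\gamma^\alpha}$ on a neighbourhood of $z$, and the inversion propagates positivity to the far side of $z$.

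Concretely the steps in order are: (1) select the good pole $z$ in the intersection of the relevant full-measure sets for $\mm$ and $\mu$; (2) apply Proposition~\ref{P:disintegration} to $(\Omega,\sfd,\mm)$ at $z$ to get $\mm=\int\mm^\alpha q(d\alpha)$, $\mm^\alpha\ll\H^1\llcorner_{\gamma^\alpha}$, and analogously a disintegration of $\mu$ along the same ray partition (the partition depends only on $\sfd$ and $z$, not on the measure, which is the key point making the two disintegrations comparable); (3) use the inversion plan $\pi^z$ together with the qualitative $\MCP$ for $\mu$ to show that for $q$-a.e.\ $\alpha$ the conditional $\mu^\alpha$ is a positive, $\H^1$-a.c.\ measure on all of $\gamma^\alpha\cap\mathsf{Ip}$-relevant portion, so that on each such ray $\mm^\alpha\ll\mu^\alpha$; (4) integrate: given a Borel $N$ with $\mu(N)=0$, deduce $\mu^\alpha(N)=0$ for $q$-a.e.\ $\alpha$ (after checking the quotient measure of $\mu$ is absolutely continuous with respect to $q$, or arguing directly via the ray partition), hence $\mm^\alpha(N)=0$, hence $\mm(N)=0$. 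I expect the main obstacle to be step~(3)--(4): matching the two disintegrations requires controlling the quotient/transverse measures, and one must be careful that the $\mu$-null set $N$ one starts with genuinely meets $q$-a.e.\ ray in an $\H^1$-null (equivalently $\mu^\alpha$-null) set — this is exactly the delicate Fubini-type argument on the quotient that the paper presumably handles by exploiting that $\pi^z$ has first marginal equal to \emph{all} of $\mu$ (not just $\mu$ restricted to some rays), forcing the transverse distribution of $\mu$ to be controlled by that of $\mm$. The role of essential non-branching for \emph{both} spaces is to make the ray partitions honest partitions (up to null sets) so that these Fubini arguments are legitimate.
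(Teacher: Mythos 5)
Your toolkit is the right one --- disintegration along rays through a pole, $\H^1$-equivalence of the conditionals of $\mu$ forced by the qualitative $\MCP$, and the inversion plan to cross the pole --- but the way you assemble it leaves a genuine gap at steps (3)--(4), and it is not a technicality. Fixing a \emph{single} pole $z$ and writing $\mm=\int \mm^\alpha\,q_{\mm}(d\alpha)$ and $\mu=\int\mu^\alpha\,q_{\mu}(d\alpha)$ along the same ray partition, the fibrewise implication you can actually extract from $\mu(N)=0$ is: $\mu^\alpha(N)=0$, hence $\H^1(\gamma^\alpha\cap N)=0$, hence $\mm^\alpha(N)=0$ --- but only for $q_{\mu}$-a.e.\ $\alpha$. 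The exceptional family of rays is $q_{\mu}$-null, and nothing in the hypotheses prevents it from carrying positive $q_{\mm}$-measure; the partition is indeed measure-independent, but the quotient measures are not, so you still need $q_{\mm}\ll q_{\mu}$, and this transverse comparison is simply not available at a single pole. The inversion plan $\pi^{z}$ cannot supply it: under (essential) non-branching it acts ray-by-ray, relating $\mu$ on one side of $z$ to something $\ll\mu$ on the reflected rays, so it only ever compares the transverse distribution of $\mu$ with itself and says nothing about how $\mm$ spreads across the rays through $z$. Your parenthetical hope that $(P_{1})_{\sharp}\pi^{z}=\mu$ ``forces the transverse distribution of $\mu$ to be controlled by that of $\mm$'' is moreover in the wrong direction: what the conclusion $\mm\ll\mu$ requires is control of $\mm$ by $\mu$.

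The paper avoids this by never comparing the two quotient measures at a fixed pole. It argues by contradiction ($\mm(A)>0$, $\mu(A)=0$), uses the $\MCP$ of $\mm$ to push $A$ halfway towards $\mu$-generic poles $x$, and then applies Fubini so that the outer integration runs over the \emph{midpoints} $z$, weighted by $\mm$; the hypothesis $\mm(X\setminus\mathsf{Ip}(\mu))=0$ guarantees that $\mm$-a.e.\ such midpoint admits an inversion plan. For each such $z$ one shows that the set $E(z)$ of poles visible from $z$ is $\mu$-null, splitting it in two: poles reached through rays with $\H^1(\gamma^\alpha\cap A)=0$ are killed by exactly your longitudinal argument ($\mu^\alpha\ll\H^1$ plus the fact that reflection through $z$ is bi-Lipschitz on each ray), while poles reached through the $q$-null family of bad rays are killed by the inversion plan, which transfers the $\mu$-nullity of the saturated set $Q^{-1}(B)$ on one side of $z$ to the $\mu$-nullity of its image on the other side, using all three conditions $(P_{1})_{\sharp}\pi^{z}=\mu$, $\spt(\pi^{z})\subset H(z)$, $(P_{2})_{\sharp}\pi^{z}\ll\mu$. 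So the inversion plan is used to dispose of a transverse exceptional set of \emph{poles} for each midpoint, not to match the quotient measures of $\mm$ and $\mu$; to repair your argument you would have to replace the single-pole fibrewise comparison with some such averaging over midpoints.
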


\begin{proof}
We start by observing that from Proposition \ref{Prop:d-convex}, the triples $(\Omega,\sfd, \mm)$ and $(\Omega, \sfd, \mu)$
are geodesic, essentially non-branching m.m.s. verifying the qualitative $\MCP$. Once the measures will be restricted to sets of finite measure, 
we will use the result of Proposition \ref{P:disintegration} 

\textbf{Step 1.} 
Assume by contradiction the claim is false. Then there exists $A \subset \Omega$, that we can assume to be a subset of $B_{r}(o) \subset \Omega$ for some $o \in \Omega$ and $r>0$,
so that 
\[
\mm(A) > 0 \quad \textrm{and} \quad \mu(A) = 0.
\]
Since $\mm$ verifies the qualitative $\MCP$, we have the following strict inequality
\[
\int_{B_{r}(o)}   \mm (A_{1/2,x} \cap \mathcal{T}_{nb}(x)) \, \mu(dx) > 0,
\]
where $\mathcal{T}_{nb}(x)$ has been introduced in Section \ref{Ss:essentially} and guarantees that the transportation towards $x$ moves along non-branching geodesics (recall that  $\mathcal{T}_{nb}(x)$ has full measure thanks to \eqref{eq:Apm0}).  The measurability  of  $A_{1/2,x} \cap \mathcal{T}_{nb}(x)$ follows from the fact that $\mathcal{T}_{nb}$ is $\sigma$-compact; note we also used that $\mathcal{T}_{nb}(x)$ is of full $\mm$-measure thanks to the essential non-branching assumption.
Using Fubini's Theorem to change order of integration, we get
\begin{equation}\label{eq:pfm(0)}
0  <  \int_{B_{2r}(o)  } \mu \left( \left\{ x \in B_{r}(o) : \exists \, \gamma \in \Geo(x)  \text{ s.t } z = \gamma_{1/2}, \,  \gamma_{0} \in A, \, \gamma_{0},z \in \mathcal{T}_{nb}(x) \right\} \right) \mm(dz). 
\end{equation}
As $\mm ( X \setminus \mathsf{Ip}(\mu) ) = 0$, the set of integration can be substituted by $B_{2r}(o) \cap \mathsf{Ip}(\mu)$ without changing anything.
Fix any $z \in B_{2r}(o)\cap \mathsf{Ip}(\mu)$. Since $(\Omega,\sfd, \mm)$ is a geodesic m.m.s.,
from the qualitative $\MCP$ and Proposition \ref{P:disintegration} applied to $z$ and $\mu\llcorner_{B_{2r}(z)}$ 
it follows that \footnote{Observe  that convexity of $B_{2r}(z)$ is not needed to apply 
Disintegration theorem, we only use that for each point in $x \in B_{2r}(z)$, any intermediate point between $x$ and $z$ 
belongs to $B_{2r}(z)$. 
The same property permits to use $\MCP$ to obtain the absolute continuity of the conditional measures 
with respect to the Hausdorff measure of dimension 1.}
\begin{equation}\label{eq:pfm(1)}
\mu \llcorner_{B_{2r}(z)}\, = \, \int_{Q( B_{2r}(z) ) } \mu^{\alpha} q(d\alpha), \qquad 
\mu^{\alpha } \ll \mathcal{H}^{1}\llcorner_{\gamma^{\alpha}}, \quad q\textrm{-a.e.} \, \alpha \in Q( B_{2r}(z) ),
\end{equation}
where $q = Q_\sharp ( \mu \llcorner_{ B_{2r}(z)})$ is the quotient measure and $\gamma^{\alpha}$ is the maximal geodesic in $\Geo(z)$ containing $\alpha$. 
Moreover for $q$-a.e. $\alpha \in Q( B_{2r}(z) )$, the density of $\mu^{\alpha}$ with respect to $\H^{1}\llcorner_{\gamma^{\alpha}}$ 
is strictly positive at any point and so $\H^{1}\llcorner_{\gamma^{\alpha}} \ll \mu^{\alpha}$ for $q$-a.e. $\alpha \in Q( B_{2r}(z) )$.
Hence, $\mu(A)=0$ implies that 
$$
\int_{Q( B_{2r}(z) ) } \H^{1}(\gamma^{\alpha} \cap A) \, q(d\alpha) = 0.
$$
There exists therefore a $q$-measurable set $B \subset Q( B_{2r}(z) )$ with $q(B) = 0$ so that 
\begin{equation}\label{eq:pfm(2)}
\H^{1}(\gamma^{\alpha} \cap A)  = 0, \qquad \forall \, \alpha \in Q(B_{2r}(z))\setminus B.
\end{equation}

\textbf{Step 2.} 
We now write the set
 $$E=E(z): = \left\{ x \in B_{r}(o) :  \exists \, \gamma \in \Geo(x)  \text{ s.t } z = \gamma_{1/2}, \,  \gamma_{0} \in A, \, \gamma_{0},z \in \mathcal{T}_{nb}(x)  \right\}$$
  as  union of the following two sets:
\begin{align*}
E_{1} : = &~ \{x \in B_{r}(o) : \exists \, \gamma \in \Geo(x)  \text{ s.t } z = \gamma_{1/2}, \,  \gamma_{0} \in A, \, \gamma_{0},z \in \mathcal{T}_{nb}(x),  \, Q(\gamma_{0}) \in B \},  \crcr
E_{2} : = &~ \{x \in B_{r}(o): \exists \, \gamma \in \Geo(x)  \text{ s.t } z = \gamma_{1/2}, \,  \gamma_{0} \in A, \, \gamma_{0},z \in \mathcal{T}_{nb}(x),  \,
Q(\gamma_{0}) \in Q(B_{2r}(z)) \setminus B \}.
\end{align*}
Note that by the definition of $E$, the map $Q$ is well defined over $E$ and the definitions of $E_{1}$ and $E_{2}$ are well posed. In order to get the thesis it is enough to show that  both $\mu(E_1)=0$ and $\mu(E_2)=0$; this will give $\mu(E_z)=0$, for every $z \in B_{2r}(o) \cap \mathsf{Ip}(\mu)$, contradicting \eqref{eq:pfm(0)}.

In order to prove $\mu(E_1)=0$, observe that thanks to the non-branching property of $\mathcal{T}_{nb}$ it holds
\begin{equation}\label{eq:QHT}
\left( X \times Q^{-1}(B) \right) \cap H(z) \cap \mathcal{T}^{-1}_{nb} =  
\left(   P_{1}\left( \left(  X \times Q^{-1}(B)  \right) \cap H(z) \cap \mathcal{T}^{-1}_{nb} \right) \times X \right)  \cap H(z) \cap \mathcal{T}^{-1}_{nb},
\end{equation}
where we have used the following notation: $\mathcal{T}^{-1}_{nb}: = \{ (z,x) \in X\times X : \, (x,z) \in \mathcal{T}_{nb} \}$.
Indeed it is easily checked that 
\begin{align*}
&~ \left\{ (x,y)  \in  X \times Q^{-1}(B) :  (x,y) \in  H(z) \cap \mathcal{T}^{-1}_{nb} \right\}  \crcr
&~ = \left(  \left\{ x\in X : \, \exists\, y \in Q^{-1}(B),\, (x,y) \in H(z) \cap \mathcal{T}^{-1}_{nb} \right\} \times X \right) \cap H(z) \cap \mathcal{T}^{-1}_{nb}.
\end{align*}
The inclusion $\subset$ is trivial, and  
the reverse inclusion follows directly by the non branching property of $\mathcal{T}_{nb}$.
As the right hand side term can be rewritten as
$$
\left(   P_{1}\left( \left(  X \times Q^{-1}(B)  \right) \cap H(z) \cap \mathcal{T}^{-1}_{nb} \right) \times X \right)  \cap H(z) \cap \mathcal{T}^{-1}_{nb},
$$
the identity \eqref{eq:QHT} is proved.

Since  $\mu(Q^{-1}(B)) = 0$, the definition  \eqref{E:reverse} of inversion plan $\pi^z$ implies that 
\begin{align*}
0 &~ =  \pi^{z} \left(  \left( X \times Q^{-1}(B) \right) \cap H(z) \cap \mathcal{T}^{-1}_{nb} \right) \crcr
&~    = \pi^{z}\left(  \left(   P_{1}\left( \left(  X \times Q^{-1}(B)  \right) \cap H(z) \cap \mathcal{T}^{-1}_{nb} \right) \times X \right) \cap H(z)\cap \mathcal{T}^{-1}_{nb} \right) \crcr
&~    = \mu \left(  P_{1}\left( \left(  X \times Q^{-1}(B)  \right) \cap H(z)\cap \mathcal{T}^{-1}_{nb} \right) \right). 
\end{align*}
As $E_{1} \subset  P_{1}\left( \left(  X \times Q^{-1}(B)  \right) \cap H(z) \cap \mathcal{T}^{-1}_{nb}\right)$, we have that $\mu(E_{1}) = 0$.

\noindent
The fact that $\mu(E_2)=0$ follows easily by the disintegration \eqref{eq:pfm(1)}. 
Indeed by \eqref{eq:pfm(2)}, we infer $\H^{1}(\gamma^{\alpha} \cap A)  = 0$ for every $\alpha \in Q(B_{2r}(z))\setminus B$.
Therefore, after the inversion with respect to $z$, we have
$$ 
\H^{1}(\gamma^{\alpha} \cap E_2)  = 0, \quad \forall  \alpha\in Q(E_2).
$$
Recalling that $ \mu^{\alpha } \ll \mathcal{H}^{1}\llcorner_{\gamma^{\alpha}},$ for $q$-a.e.  $\alpha \in Q( B_{2r}(z))$, the disintegration \eqref{eq:pfm(1)} then implies 
$$
\mu(E_2)= \int_{Q( E_2) } \mu^{\alpha}(E_2)\, q(d\alpha)=0. 
$$
\end{proof}

\medskip
One can drop the hypothesis that $\mm$ is concentrated on $\mathsf{Ip}(\mu)$, and still obtain the same claim at the price of 
adding regularity properties on the set of inversion points of the reference measure $\mu$.  
We see two different cases; but, before that, let us give the following definition.

\begin{definition}\label{D:muconnected}
A set $C \subset X$ is said to be \emph{$\mu$-connected} if for any compact set $A \subset C$ there exists a set $U \subset C$ with $\mu(U) > 0$ and $\bar t \in (0,1)$ so that 
$$
(A,U)_{\bar t} : = \left\{ \gamma_{\bar t} : \gamma \in \Geo, \, \gamma_{0} \in A, \, \gamma_{1} \in U \right\} \subset C. 
$$
\end{definition}

Let us mention that Definition \ref{D:muconnected} resembles the convexity property of regular points in Alexandrov spaces.

\begin{theorem}\label{T:metric2}
Let $(X,\sfd,\mm)$ be an essentially non-branching m.m.s. that verifies the qualitative $\mathsf{MCP}$ condition  \eqref{eq:QualMCP}.
Assume the existence of a reference measure $\mu$ for $(\Omega,\sfd)$, where $\Omega= \spt (\mm)$,
so that $(\Omega, \sfd, \mu)$ verifies the qualitative $\MCP$  condition \eqref{eq:QualMCP} and it is essentially non-branching. \\
\noindent
If $\mathsf{Ip}(\mu)$ is $\mu$-connected, then
$$
\mm\llcorner_{\mathsf{Ip}(\mu)} \ll \mu.
$$
\end{theorem}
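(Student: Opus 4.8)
The plan is to repeat the scheme of the proof of Theorem \ref{T:metric}, the only new ingredient being that the points around which one inverts are now forced to lie in $\mathsf{Ip}(\mu)$ by means of the $\mu$-connectedness of $\mathsf{Ip}(\mu)$, which takes over the role played there by the (here unavailable) hypothesis $\mm(X\setminus\mathsf{Ip}(\mu))=0$. As in Theorem \ref{T:metric}, Proposition \ref{Prop:d-convex} gives that $(\Omega,\sfd,\mm)$ and $(\Omega,\sfd,\mu)$ are geodesic, essentially non-branching m.m.s.\ satisfying the qualitative $\MCP$. Arguing by contradiction and using inner regularity of $\mm$, we reduce to the existence of a \emph{compact} set $A\subset\mathsf{Ip}(\mu)$ with $\mm(A)>0$ and $\mu(A)=0$.

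First I would apply Definition \ref{D:muconnected} to this compact $A\subset\mathsf{Ip}(\mu)$, obtaining $U\subset\mathsf{Ip}(\mu)$ with $\mu(U)>0$ and $\bar t\in(0,1)$ such that $(A,U)_{\bar t}\subset\mathsf{Ip}(\mu)$; we may also assume $U$ bounded, replacing it by $U\cap B_{R}(\bar x)$ for $R$ large. Recalling that $\mathcal{T}_{nb}(w)$ has full $\mm$-measure by \eqref{eq:Apm0} and writing $A_{\bar t,w}$ as in \eqref{E:d-evolution}, the qualitative $\MCP$ for $\mm$ applied with centre $w$ and time $\bar t$, together with $\mu(U)>0$, yields
\[
\int_{U}\mm\bigl(A_{\bar t,w}\cap\mathcal{T}_{nb}(w)\bigr)\,\mu(dw)\ \geq\ f(\bar t)\,\mm(A)\,\mu(U)\ >\ 0,
\]
where $f$ is the function provided by the qualitative $\MCP$. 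By Fubini's theorem --- the set involved being measurable thanks to the $\sigma$-compactness of $\mathcal{T}_{nb}$, exactly as in Theorem \ref{T:metric} --- this rewrites as
\[
0\ <\ \int_{X}\mu\Bigl(\bigl\{\,w\in U:\ \exists\,\gamma\in\Geo(w)\ \text{ with }\ \gamma_{\bar t}=z,\ \gamma_{0}\in A,\ \gamma_{0},z\in\mathcal{T}_{nb}(w)\,\bigr\}\Bigr)\,\mm(dz).
\]
The point is that whenever the inner $\mu$-measure is positive one has $z=\gamma_{\bar t}$ with $\gamma_{0}\in A$ and $\gamma_{1}\in U$, hence $z\in(A,U)_{\bar t}\subset\mathsf{Ip}(\mu)$: so for $\mm$-a.e.\ $z$ contributing to the integral an inversion plan $\pi^{z}$ does exist.

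From here the argument is word-for-word that of Theorem \ref{T:metric}. Fix such a $z\in\mathsf{Ip}(\mu)$ and denote by $E(z)$ the set whose $\mu$-measure appears in the last display; it suffices to prove $\mu(E(z))=0$. Applying Proposition \ref{P:disintegration} to $z$ and to $\mu$ restricted to a ball centred at $z$ that contains $A\cup U$ (each $\gamma_{0}\in A$ joined to $z$ by such a geodesic lies on a ray $\gamma^{\alpha}\in\Geo(z)$), one obtains a disintegration $\int\mu^{\alpha}\,q(d\alpha)$ with $\mu^{\alpha}\ll\mathcal{H}^{1}\llcorner_{\gamma^{\alpha}}$ of everywhere strictly positive density, hence $\mathcal{H}^{1}\llcorner_{\gamma^{\alpha}}\ll\mu^{\alpha}$; since $\mu(A)=0$ this produces a $q$-null set $B$ with $\mathcal{H}^{1}(\gamma^{\alpha}\cap A)=0$ for $\alpha\notin B$. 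Splitting $E(z)=E_{1}\cup E_{2}$ according to whether $Q(\gamma_{0})\in B$ or not, one has $\mu(E_{1})=0$ because $E_{1}\subset P_{1}\bigl((X\times Q^{-1}(B))\cap H(z)\cap\mathcal{T}^{-1}_{nb}\bigr)$ and the defining property \eqref{E:reverse} of $\pi^{z}$ --- combined with the non-branching rearrangement \eqref{eq:QHT} and $\mu(Q^{-1}(B))=0$ --- forces this projection to be $\mu$-null; and $\mu(E_{2})=0$ because, after the geodesic inversion at $z$, $\mathcal{H}^{1}(\gamma^{\alpha}\cap E_{2})=0$ for $\alpha\in Q(E_{2})$, so the disintegration gives $\mu(E_{2})=\int\mu^{\alpha}(E_{2})\,q(d\alpha)=0$. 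This contradicts the last display and proves the theorem.

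I expect the only genuinely new step to be the second paragraph: re-arranging the $\MCP$/Fubini computation so that the inversion centres land in $(A,U)_{\bar t}\subset\mathsf{Ip}(\mu)$ --- this is the sole place where $\mu$-connectedness intervenes, replacing the assumption $\mm(X\setminus\mathsf{Ip}(\mu))=0$ of Theorem \ref{T:metric}. Everything else, including the measurability of the sets entering Fubini and the careful interplay between the inversion plan, the disintegration along $\Geo(z)$ and the non-branching structure $\mathcal{T}_{nb}$, is identical to the proof of Theorem \ref{T:metric}.
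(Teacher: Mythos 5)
Your proposal is correct and follows essentially the same route as the paper's own (sketched) proof: reduce by contradiction to a compact $A\subset\mathsf{Ip}(\mu)$ with $\mm(A)>0$, $\mu(A)=0$, invoke $\mu$-connectedness to produce $U$ and $\bar t$ so that the $\MCP$/Fubini computation places the inversion centres in $(A,U)_{\bar t}\subset\mathsf{Ip}(\mu)$, and then conclude exactly as in Theorem \ref{T:metric}. You have correctly identified the single point where the $\mu$-connectedness hypothesis substitutes for $\mm(X\setminus\mathsf{Ip}(\mu))=0$.
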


\begin{proof}
The proof follows the same lines of the proof of Theorem \ref{T:metric} and we only sketch the first part of it.
Assume by contradiction the claim is false. Then there exists a compact set $A \subset \Omega \cap \mathsf{Ip}(\mu)$, that we can assume to be a subset of $B_{r}(o) \subset \Omega$ for some $o \in \Omega$ and $r>0$,
so that 
\[
\mm(A) > 0 \quad \textrm{and} \quad \mu(A) = 0.
\]
Since $\mm$ verifies the qualitative $\MCP$, we have the following strict inequality
\[
\int_{U}   \mm (A_{\bar t,x} \cap \mathcal{T}_{nb}(x)) \, \mu(dx) > 0,
\]
where $U\subset \mathsf{Ip}(\mu)$ and $\bar t \in (0,1)$ are given by the $\mu$-connectedness of $\mathsf{Ip}(\mu)$. 
Using Fubini's Theorem to change order of integration, we get
$$
0  <  \int_{ (A,U)_{\bar t}  } \mu \left( \left\{ x \in B_{r}(o) :  \exists \, \gamma \in \Geo(x) \text{ s.t. } z = \gamma_{\bar t}, \,  \gamma_{0} \in A, \, \gamma_{0},z \in \mathcal{T}_{nb}(x) \right\} \right) \mm(dz). 
$$
Since $(A,U)_{\bar t} \subset \mathsf{Ip}(\mu)$, the proof continues as in Theorem \ref{T:metric}, obtaining a contradiction and therefore the claim.
\end{proof}

We can also assume $\mathsf{Ip}(\mu)$ to be open and prove the next result.

\begin{theorem}\label{T:metric3}
Let $(X,\sfd,\mm)$ be an essentially non-branching m.m.s. that verifies the qualitative $\mathsf{MCP}$ condition  \eqref{eq:QualMCP}.
Assume the existence of a reference measure $\mu$ for $(\Omega,\sfd)$, where $\Omega= \spt (\mm)$,
so that $(\Omega, \sfd, \mu)$ verifies the qualitative $\MCP$  condition \eqref{eq:QualMCP} and it is essentially non-branching. \\
\noindent
If $\mathsf{Ip}(\mu)$ is open in $\Omega$, then
$$
\mm\llcorner_{\mathsf{Ip}(\mu)} \ll \mu.
$$
\end{theorem}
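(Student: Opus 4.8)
The plan is to repeat, almost verbatim, the argument of Theorem~\ref{T:metric} (and of its variant Theorem~\ref{T:metric2}); the only genuinely new ingredient is a geometric use of the openness of $\mathsf{Ip}(\mu)$, which plays the role that the hypothesis $\mm(X\setminus\mathsf{Ip}(\mu))=0$ played there. I would argue by contradiction: if $\mm\llcorner_{\mathsf{Ip}(\mu)}\not\ll\mu$, then by inner regularity of $\mm$ there is a compact set $A\subset\Omega\cap\mathsf{Ip}(\mu)$ with $\mm(A)>0$ and $\mu(A)=0$, and after localizing we may assume $A\subset B_r(o)\subset\Omega$ for some $o\in\Omega$ and $r>0$. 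Since $\mathsf{Ip}(\mu)$ is open in $\Omega$ and $A$ is compact, there is $\rho>0$ such that the $\rho$-neighbourhood of $A$ in $\Omega$ is still contained in $\mathsf{Ip}(\mu)$; I then fix an interpolation parameter $\bar t\in(0,1)$ so small that $2r\bar t<\rho$, and run the whole proof of Theorem~\ref{T:metric} with the midpoint parameter $1/2$ replaced by this $\bar t$.

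Concretely, the qualitative $\MCP$ for $\mm$ (and the fact that $\mathcal{T}_{nb}(x)$ has full $\mm$- and $\mu$-measure, by \eqref{eq:Apm0}) gives
\[
\int_{B_r(o)}\mm\big(A_{\bar t,x}\cap\mathcal{T}_{nb}(x)\big)\,\mu(dx)\;\ge\;f(\bar t)\,\mm(A)\,\mu(B_r(o))\;>\;0,
\]
with $f=f_j$ the function from \eqref{eq:QualMCP} associated to a sufficiently large ball, and Fubini turns this into
\[
0\;<\;\int_X\mu\Big(\big\{x\in B_r(o):\exists\,\gamma\in\Geo(x)\ \text{s.t. }z=\gamma_{\bar t},\ \gamma_0\in A,\ \gamma_0,z\in\mathcal{T}_{nb}(x)\big\}\Big)\,\mm(dz).
\]
The key observation is that every $z$ contributing to this integral carries a geodesic $\gamma$ with $\gamma_0\in A$ and $z=\gamma_{\bar t}$, so, since $\gamma_0$ and $\gamma_1=x$ both lie in $B_r(o)$,
\[
\sfd(z,\gamma_0)=\bar t\,\sfd(\gamma_0,\gamma_1)<2r\bar t<\rho;
\]
hence $z$ lies in the $\rho$-neighbourhood of $A$ and therefore $z\in\mathsf{Ip}(\mu)$, so an inversion plan $\pi^z$ exists. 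This is exactly the step where openness of $\mathsf{Ip}(\mu)$ substitutes for the global assumption of Theorem~\ref{T:metric}.

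From here the proof is that of Theorem~\ref{T:metric}, word for word (with $1/2$ replaced by $\bar t$): for such a fixed $z$, apply Proposition~\ref{P:disintegration} to $z$ and $\mu\llcorner_{B_{2r}(z)}$ to disintegrate $\mu$ along the maximal geodesics $\gamma^\alpha\in\Geo(z)$, with $\mu^\alpha\ll\mathcal{H}^1\llcorner_{\gamma^\alpha}$ and, by strict positivity of the densities, also $\mathcal{H}^1\llcorner_{\gamma^\alpha}\ll\mu^\alpha$; since $\mu(A)=0$ this forces $\mathcal{H}^1(\gamma^\alpha\cap A)=0$ for every $\alpha$ outside a $q$-null set $B$. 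Splitting the inner set $E(z)$ according to whether $Q(\gamma_0)\in B$ into $E_1$ and $E_2$, one uses the non-branching identity \eqref{eq:QHT} together with the defining properties \eqref{E:reverse} of $\pi^z$ (in particular $(P_2)_\sharp\pi^z\ll\mu$ and $\mu(Q^{-1}(B))=0$) to get $\mu(E_1)=0$, while $\mathcal{H}^1(\gamma^\alpha\cap A)=0$ for $\alpha\notin B$ together with the disintegration yields $\mu(E_2)=0$; hence $\mu(E(z))=0$ for every $z$ in the domain of integration, contradicting the strict inequality above. I do not expect a serious obstacle: the single new point is the elementary estimate $\sfd(z,\gamma_0)<2r\bar t$ and the choice $2r\bar t<\rho$, and the rest is a transcription of the proof of Theorem~\ref{T:metric}; one only has to verify that replacing the midpoint parameter by a small $\bar t$ leaves the relevant inclusions intact (namely $\gamma_0,x\in B_{2r}(z)$, the well-posedness of $Q$ over $E(z)$, and the bi-Lipschitz reflection along $\gamma^\alpha$ used for $E_2$), which is routine.
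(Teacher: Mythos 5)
Your proof is correct and follows essentially the same route as the paper's: argue by contradiction, apply the qualitative $\MCP$ and Fubini, check that the intermediate points lie in $\mathsf{Ip}(\mu)$, and then run the argument of Theorem \ref{T:metric} verbatim. The only (harmless) difference is how openness is exploited: the paper keeps the midpoint $t=1/2$ and shrinks the spatial localization of $A$ to a ball $B_{r_w}(w)$ with $B_{2r_w}(w)\subset\mathsf{Ip}(\mu)$, whereas you keep $A$ fixed and shrink the interpolation time $\bar t$ so that the points $\gamma_{\bar t}$ stay in a $\rho$-neighbourhood of $A$ contained in $\mathsf{Ip}(\mu)$.
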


\begin{proof}
Assume by contradiction the claim is false. 
Then there exists $A \subset \Omega \cap \mathsf{Ip}(\mu)$, that we can assume to be a subset of $B_{R}(o) \subset \Omega$ for some $o \in \Omega$ and $R>0$,
so that 
\[
\mm(A) > 0 \quad \textrm{and} \quad \mu(A) = 0.
\]
By inner regularity we can also assume $A$ to be compact in $\Omega$. As $\mathsf{Ip}(\mu)$ is open, for any $w \in A$ there exists $r_{w}$ so that 
$B_{2r_{w}}(w) \subset \mathsf{Ip}(\mu)$. Then by compactness there exists $w \in A$ so that 
$$
\mm( A \cap B_{r_{w}}(w) ) > 0,
$$
and therefore we can assume with no loss of generality that $A \subset B_{r_{w}}(w) $. 
Since $\mm$ verifies the qualitative $\MCP$, we have the following strict inequality
\[
\int_{B_{r_{w}}(w)}   \mm (A_{\frac{1}{2},x} \cap \mathcal{T}_{nb}(x)) \, \mu(dx) > 0.
\]
Using Fubini's Theorem to change order of integration, we get
$$
0  <  \int_{ B_{2r_{w}}(w)   } \mu \left( \left\{ x \in B_{r}(o) : \exists \, \gamma \in \Geo(x) 
\text{ s.t. }  z = \gamma_{\bar t}, \,  \gamma_{0} \in A, \, \gamma_{0},z \in \mathcal{T}_{nb}(x) \right\} \right) \mm(dz). 
$$
Since $B_{2r_{w}}(w)  \subset \mathsf{Ip}(\mu)$, the proof continues as in Theorem \ref{T:metric}, obtaining a contradiction and therefore the claim.
\end{proof}

\section{Properties of reference measures}\label{S:stability}

In this section we study few natural properties of reference measures.  
\\As one would expect, the property of being a reference measure is invariant under multiplication by a positive function. 
 
\begin{proposition}[Invariance]
Let $(X,\sfd,\mu)$ be a m.m.s. with $\mu$ reference measure for $(X,\sfd)$. Let $f : X \to \R$ be a Borel function such that $\mm : = f \cdot \mu$ is still a Radon measure. If 
$$
\mu \left(  \{ x\in X \colon f(x) = 0 \} \right)= 0, 
$$
then $\mm$ is a reference measure for $(X,\sfd)$.
\end{proposition}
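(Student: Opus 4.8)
The plan is to build, for $\mm$-a.e.\ point $z$, an inversion plan for $\mm$ out of the inversion plan $\pi^z$ for $\mu$ by reweighting it in the first variable. Concretely I would set $\tilde\pi^z := (f\circ P_1)\cdot \pi^z$, the Borel measure on $X\times X$ with density $f\circ P_1$ with respect to $\pi^z$. Before anything else I would reduce to the case $f\ge 0$: since $\mm=f\cdot\mu$ is a positive measure the set $\{f<0\}$ is $\mu$-negligible, so replacing $f$ by $f^+$ changes neither $\mm$ nor the hypothesis $\mu(\{f=0\})=0$; hence $f>0$ holds $\mu$-a.e. From this I would record that $\mu$ and $\mm$ are mutually absolutely continuous: $\mm\ll\mu$ is trivial, and if $\mm(B)=0$ then $f=0$ $\mu$-a.e.\ on $B$, so $\mu(B\cap\{f>0\})=0$ and therefore $\mu(B)=0$ because $\mu(\{f=0\})=0$. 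In particular $\mm$ is non-zero (as $\mu\ll\mm$ and $\mu\neq 0$), and ``$\mu$-a.e.''\ and ``$\mm$-a.e.''\ mean the same thing, so it suffices to produce $\tilde\pi^z$ for $\mu$-a.e.\ $z$, i.e.\ for each $z$ for which $\pi^z$ exists.

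Then I would check the three defining properties of an inversion plan for $\tilde\pi^z$. First, $(P_1)_\sharp\tilde\pi^z=\mm$: for Borel $B\subset X$,
\[
(P_1)_\sharp\tilde\pi^z(B)=\int_{B\times X} f(x)\,\pi^z(dx\,dy)=\int_B f\,d\big((P_1)_\sharp\pi^z\big)=\int_B f\,d\mu=\mm(B),
\]
using only $(P_1)_\sharp\pi^z=\mu$ and the change-of-variables formula for pushforwards; in particular no disintegration of $\pi^z$ is needed. Second, $\spt(\tilde\pi^z)\subset H(z)$: since $\tilde\pi^z\ll\pi^z$ and $\pi^z\big(X\times X\setminus H(z)\big)=0$, also $\tilde\pi^z\big(X\times X\setminus H(z)\big)=0$, which --- as $H(z)$ is closed --- is equivalent to the support condition. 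Third, $(P_2)_\sharp\tilde\pi^z\ll\mm$: if $\mm(B)=0$ then $\mu(B)=0$, hence $\pi^z(X\times B)=(P_2)_\sharp\pi^z(B)=0$ because $(P_2)_\sharp\pi^z\ll\mu$; therefore $(P_2)_\sharp\tilde\pi^z(B)=\int_{X\times B}(f\circ P_1)\,d\pi^z=0$, the integrand being integrated over a $\pi^z$-null set.

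It remains to see that $\tilde\pi^z$ is a genuine Radon measure, i.e.\ that $\tilde\pi^z\in\M^+(X\times X)$ and not merely a Borel measure. For this I would use that $\mm$ is Radon together with the computation above: for every compact $C\subset X\times X$,
\[
\tilde\pi^z(C)\le \tilde\pi^z\big(P_1(C)\times X\big)=\mm\big(P_1(C)\big)<\infty ,
\]
since $P_1(C)$ is compact; and a non-negative Borel measure that is finite on the compact subsets of the proper (hence locally compact and $\sigma$-compact) Polish space $X\times X$ is automatically inner regular, i.e.\ Radon. I do not expect a serious obstacle: the only points that require a little care are the preliminary reduction to $f\ge0$ together with the mutual absolute continuity of $\mu$ and $\mm$ --- which is exactly what lets one pass between ``$\mu$-a.e.''\ and ``$\mm$-a.e.''\ and identifies $\ll\mu$ with $\ll\mm$ for the second marginal --- and the verification of Radonness just described; everything else is a routine manipulation of pushforwards and densities.
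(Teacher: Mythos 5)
Your proposal is correct and follows essentially the same route as the paper: the paper also defines the reweighted plan $\hat\pi^{z}:=f\circ P_{1}\cdot\pi^{z}$ at each inversion point of $\mu$ and verifies the three defining properties exactly as you do. The only differences are that you spell out some details the paper leaves implicit (the reduction to $f\ge 0$, the mutual absolute continuity $\mu\sim\mm$, and the check that $\hat\pi^{z}$ is Radon), which is fine.
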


\begin{proof}
Consider the set of inversion points of $\mu$, $\Ip(\mu)$, and observe that $\mm(X \setminus \Ip(\mu)) = 0$.
For $z \in \Ip(\mu)$ let $\pi^{z}$ be the associated inversion plan and define the rescaled plan 
$$
\hat \pi^{z} : = f\circ P_1 \; \cdot \pi^{z}.
$$
Since clearly $(P_{1})_{\sharp} \hat \pi^{z} = \mm$,  it only remains to prove that $(P_{2})_{\sharp} \hat \pi^{z} \ll \mm$. To this aim let  $A \subset X$ be a Borel set with $\mm(A) = 0$ and observe that,  
 by the assumption on $f$, also $\mu(A) = 0$.  By using that $\pi^z$ is an inversion plan we infer 
$$
\pi^{z} (X \times A) =  (P_2)_{\sharp} \pi^z (A)=0. 
$$
Hence $(P_{2})_{\sharp} \hat \pi^{z}(A) = f\circ P_1 \;  \cdot \pi^{z} (X \times A ) =  0$ and the claim follows.
\end{proof}

\medskip

The next property we would like to investigate is the locality, that means to obtain an inversion plan starting from a \emph{local inversion plan}. Let us define the latter object:   \\
Given a m.m.s. $(X,\sfd, \mu)$ and $z \in X$ we say that $\pi^{z} \in \M_{+}(X \times X)$ is a \emph{local inversion plan} for $\mu$
provided 
\begin{equation}\label{E:localreverse}
(P_{1})_{\sharp}\,\pi^{z} = \mu\llcorner_{B_{r}(z)}, \qquad \pi^{z}  (X\times X \setminus H(z))=0,  \qquad (P_{2})_{\sharp}\, \pi^{z} \ll \mu, 
\end{equation}
for some $r > 0$. The set of those points admitting a local inversion plan will be denoted by $\Ip_{loc}(\mu)$ 
and will be called the set of  points of local inversion.

Trivially restricting the inversion plan to the set $B_{r}(z) \times X$, any inversion point is a local inversion point, that is $\Ip_{loc}(\mu) \subset \Ip(\mu)$.
Also the converse inclusion holds, provided $(X,\sfd)$ is non-branching.

\begin{proposition}[Locality] \label{P:local} 
Let $(X,\sfd,\mu)$ be a non-branching m.m.s. that verifies the qualitative $\MCP$ condition. 
Then 
$$
\Ip_{loc}(\mu) = \Ip(\mu).
$$
\end{proposition}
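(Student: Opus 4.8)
The plan is to reduce to the non-trivial inclusion — one containment being already observed, I would fix $z\in\Ip_{loc}(\mu)$ together with a local inversion plan $\pi^{z}$ as in \eqref{E:localreverse}, say of radius $r$, and produce an honest inversion plan at $z$. The whole argument is carried out through the one-dimensional disintegration of $\mu$ along the maximal geodesics of $\Geo(z)$. Applying Proposition \ref{P:disintegration} to $z$ and to $\mu\llcorner_{B_{R}(z)}$ for each $R>0$ one gets $\mu\llcorner_{B_{R}(z)}=\int\mu^{\alpha}\,q_{R}(d\alpha)$ with $\mu^{\alpha}\ll\H^{1}\llcorner_{\gamma^{\alpha}}$, the density being strictly positive on the relative interior of $\gamma^{\alpha}\cap B_{R}(z)$, so that also $\H^{1}\llcorner_{\gamma^{\alpha}\cap B_{R}(z)}\ll\mu^{\alpha}$; since every ray of $\Geo(z)$ meets every ball centered at $z$, the quotient measures $q_{r}$ and $q_{R}$ are mutually absolutely continuous for all $R$. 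Because $(X,\sfd)$ is non-branching, for $\mu$-a.e.\ $x$ the geodesic from $x$ to $z$ is unique (it is a piece of $\gamma^{Q(x)}$) and its extension beyond $z$, when it exists, is unique as well; I would denote by $\iota$ the induced map sending a ray of $\Geo(z)$ to its maximal extension past $z$, by $\bar\gamma^{\alpha}$ the latter curve, and by $U(x)=\sup\{\ell\ge0:\ \sfd(x,\bar\gamma^{Q(x)}_{\ell})=\sfd(x,z)+\ell\}$ its \emph{minimizing reach past $z$}, which is non-increasing along each ray as one moves away from $z$.

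The key step, and the only place where the local inversion plan is used, is that $\iota$ preserves null sets of rays: $\iota_{\sharp}q_{r}\ll q_{r}$, hence $\iota_{\sharp}q_{R}\ll q_{R}$ for every $R$ by the mutual absolute continuity above. Indeed, if $N$ were a set of rays with $q_{r}(N)=0$ but $q_{r}(\iota^{-1}(N))>0$, then $\pi^{z}$ would move a positive amount of mass out of $\bigcup_{\alpha\in\iota^{-1}(N)}(\gamma^{\alpha}\cap B_{r}(z))$; since for $\mu$-a.e.\ $x$ any $y$ with $(x,y)\in H(z)$ must lie on $\bar\gamma^{Q(x)}=\gamma^{\iota(Q(x))}$, this mass would land inside $\bigcup_{\beta\in N}\gamma^{\beta}$, which is $\mu$-null because $q_{R}(N)=0$ for all $R$ — contradicting $(P_{2})_{\sharp}\pi^{z}\ll\mu$. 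In the same spirit, $(P_{2})_{\sharp}\pi^{z}\ll\mu$ together with the strictly positive densities of the conditionals shows that ``inversion along a ray'' carries $\H^{1}$-absolutely continuous measures to $\H^{1}$-absolutely continuous ones, and that $\pi^{z}$ already forces $\mu$-a.e.\ point to be an interior point of a geodesic through $z$ (a short non-branching propagation: if one point of a ray admits an inverse point $\neq z$, so does every farther point of that ray).

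With these facts I would build, for each $R>0$, a local inversion plan $\pi^{z}_{R}$ at $z$ of radius $R$ by reflecting the disintegration of $\mu\llcorner_{B_{R}(z)}$ through $z$: push $q_{R}$ forward by $\iota$, and over each ray couple $\mu^{\alpha}$ with a suitable $\H^{1}$-absolutely continuous measure on $\bar\gamma^{\alpha}$ whose mass, point by point, sits in the admissible window $\big(0,U(x)\big]$ (nonempty $\mu^{\alpha}$-a.e.\ and monotone, so a measurable coupling exists). Then $(P_{1})_{\sharp}\pi^{z}_{R}=\mu\llcorner_{B_{R}(z)}$ by construction, $\spt\pi^{z}_{R}\subseteq H(z)$ by the clipping, and $(P_{2})_{\sharp}\pi^{z}_{R}\ll\mu$ by $\iota_{\sharp}q_{R}\ll q_{R}$ together with the absolute continuity of the ray-wise couplings. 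Since $X$ is proper, $X=\bigcup_{n}B_{nr}(z)$, and gluing these plans over the annuli, $\pi^{z}:=\sum_{n}\pi^{z}_{nr}\llcorner_{(B_{nr}(z)\setminus B_{(n-1)r}(z))\times X}$, gives a Radon measure with first marginal $\mu$, support in the closed set $H(z)$ and second marginal $\ll\mu$; hence $z\in\Ip(\mu)$.

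The main obstacle is not the geometry of a single inversion, but rather the fact that one \emph{cannot} simply reuse the inverse point produced by $\pi^{z}$: ``$\bar x$ between $x$ and $z$'' plus ``$z$ between $\bar x$ and $y$'' does not imply ``$z$ between $x$ and $y$'' (already on the round sphere, once $y$ lies too far past $z$), which is exactly why the construction must spread the mass inside the admissible window instead of transporting it to a preassigned point. The real work is therefore: first, the measurability of $\iota$, of the reflection and of the clipped ray-wise couplings, so that $\pi^{z}_{R}$ is a genuine Radon measure — this is handled exactly as in Proposition \ref{P:disintegration}, by iterating the Disintegration Theorem and measurable-selection arguments; and second, the cut-locus-type control showing that the minimizing reach $U$ is strictly positive $\mu$-a.e., which is where non-branching and the qualitative $\MCP$ condition \eqref{eq:QualMCP} are genuinely used (via the monotonicity of $U$ along rays, the $\MCP$-contraction towards $z$, and the fact that the existence of $\pi^{z}$ already puts $\mu$-a.e.\ point in the interior of a geodesic through $z$).
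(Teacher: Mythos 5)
Your proposal is correct and follows essentially the same route as the paper: disintegrate $\mu$ along the maximal geodesics of $\Geo(z)$, use the local inversion plan together with non-branching to see that the ray-inversion map preserves $q$-null sets, use Proposition \ref{P:manycurves} to get a positive minimizing reach past $z$, and glue annulus by annulus, clipping the image into the admissible window. The paper's construction is a concrete instance of your ``window'' coupling: on each annulus $C_{nr,(n+1)r}(z)$ it takes the reach $\tau_\alpha$ of the outermost point of each ray and maps the annular segment affinely onto the interval $[\tau_\alpha/2,\tau_\alpha]$ of the extended ray.
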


\begin{proof}
Consider $z \in \Ip_{loc}(\mu)$, the corresponding local inversion plan $\pi^{z}$ and the corresponding open ball $B_{r}(z)$.
We will now construct a measure $\eta \in \M_{+}(X \times X)$ such that 
$$
(P_{1})_{\sharp} \eta = \mu\llcorner_{C_{r,2r}(z)}, \quad \eta (X \times X \setminus H(z)) = 0, \quad (P_{1})_{\sharp} \eta \ll \mu,
$$
where $C_{r,2r}(z) = B_{2r}(z) \setminus B_{r}(z)$. Once such a $\eta$ is obtained, one can repeat the same argument for $C_{nr,(n+1)r}(z)$ and 
obtain the measure $\eta^{n}$, for any $n \in \N$ with $n\geq 2$. Then 
$$
\hat \pi^{z} :  = \pi^{z} + \sum_{n\geq1} \eta^{n},
$$
will be an inversion plan.

Reasoning as in \eqref{eq:pfm(1)} we get
$$
\mu \llcorner_{B_{2r}(z)}\, = \, \int_{Q( B_{2r}(z) ) } \mu^{\alpha} \, q(d\alpha), \qquad 
\mu^{\alpha } \sim \mathcal{H}^{1}\llcorner_{\gamma^{\alpha}}, \quad q\textrm{-a.e.} \, \alpha \in Q( B_{2r}(z) ),
$$
where $q = Q_\sharp ( \mu \llcorner_{ B_{2r}(z)})$ is the quotient measure and $\gamma^{\alpha}$ 
is the maximal geodesic in $\Geo(z)$ containing $\alpha$. We can also assume each $\gamma^{\alpha}$ is parametrized with speed 1.
From the non-branching assumption, each $\gamma^{\alpha} \cap B_{r}(z)$ is sent by $\pi^{z}$ to a unique $\gamma^{\beta(\alpha)}$, that is  
$$
\pi^{z}\left( X \times X \setminus \{(x,y) \in B_{r}(z) \times X \colon x \in \gamma^{\alpha}, \, y \in \gamma^{\beta(\alpha)}\} \right) = 0,\qquad \beta : Q \to Q, 
$$
with $\beta$ $\mu$-measurable map. 
Moreover from Proposition \ref{P:manycurves},  for $q$-a.e. $\alpha \in Q$ 
$$
\mathcal{L}^{1} \left(\{ s \in [r,2r]  \colon \gamma^{\alpha}_{r} \notin \ee_{(0,1)}R(z)  \} \right) = 0.
$$
In particular for $q$-a.e. $\alpha \in Q$
$$
\sup\{ \tau \colon (\gamma^{\alpha}_{2r},\gamma^{\beta(\alpha)}_{\tau}) \in H(z) \} 
= \max\{ \tau \colon (\gamma^{\alpha}_{2r},\gamma^{\beta(\alpha)}_{\tau}) \in H(z) \}> 0.
$$
Denote the maximal $\tau$ with $\tau_{\alpha}$ and consider the map 
$$
T^{\alpha} : \{ \gamma^{\alpha}_{s} \colon s \in [r,2r]\}  \to   \{ \gamma^{\beta(\alpha)}_{s} \colon s \in [\tau_{\alpha}/2,\tau_{\alpha}] \},
\quad 
T^{\alpha} (\gamma^{\alpha}_{  (1-\ve) r +  \ve 2r   }) : = \gamma^{\beta}_{ (1-\ve) \frac{\tau_{\alpha}}{2} + \ve \tau_{\alpha}    }.
$$
Then the map $T : C_{r,2r}(z) \to X$, defined by $T(x) = T^{\alpha}(x)$ for $x \in \gamma^{\alpha}$, is $\mu$-measurable and satisfies
$$
(x,T(x)) \subset H(z), 
$$
for $\mu$-a.e. $x \in C_{r,2r}(z)$. Now define $\eta : = (id, T)_{\sharp} \mu\llcorner_{C_{r,2r}(z)}$ and observe that 
in order to have the claim it only remains to show that $(P_{2})_{\sharp} \eta \ll \mu$.

To this aim  consider a Borel set $A \subset X$ such that $\mu(A) = 0$. The claim is equivalent  to prove that $\mu(T^{-1}(A)) = 0$.
Consider the set $Q(A)$ and write it as the union of 
$$
Q_{1}(A) :  = \{ \alpha \in Q(A) \colon \mu^{\alpha}(A) > 0   \}, \quad Q_{2}(A) :  = \{ \alpha \in Q(A) \colon \mu^{\alpha}(A) = 0   \}. 
$$
For each $\alpha \in Q_{2}(A)$, since $\mu^{\alpha} \sim \H^{1}\llcorner_{\gamma^{\alpha}}$,
$$
\mu^{\beta^{-1}(\alpha)} \left( T^{\beta^{-1}(\alpha)} (A) \right) = 0.
$$
Moreover, since $\mu(A) = 0$, it follows that $q (Q_{1}(A)) = 0$. Then since $\beta$ was also associated with the local inversion plan, 
$q(\beta^{-1}(Q_{1}(A))) = 0$. Hence  $\mu(T^{-1}(A)) = 0$ and the claim follows.
\end{proof}

\begin{remark}
It seems to be a challenging problem to weaken the non-branching assumption in Proposition \ref{P:local} with the essentially non-branching one.  Regarding this issue, a relevant role is played by the following phenomenon. \\
Consider a point $z \in X$ of local inversion for $\mu$ and $\gamma \in \Geo(z)$, i.e. $\gamma_1=z$. Suppose that $\gamma$ can be extended through $z$ with a geodesic 
$\eta^{1}$ of length $\ve > 0$ with $\eta^{1}_{0} = z$, so that 
$$
\sfd(\gamma_{0},\eta_{1}^{1}) = \sfd(\gamma_{0},\gamma_{1}) + \sfd(\gamma_{1},\eta^{1}_{1}).
$$
Now consider the constant speed reparametrization $\hat \gamma$  of $\gamma$ restricted to $[1/2, 1]$. 
With no violation of essentially non-branching, it could happen that $\hat \gamma$ admits also a different extension through $z$, that is there exists 
a geodesic $\eta^{2} \neq \eta^{1}$ again such that $\eta^{2}_{0} =z$ and 
$$
\sfd(\hat \gamma_{0},\eta_{1}^{2}) = \sfd(\hat \gamma_{0},\hat \gamma_{1}) + \sfd(\hat \gamma_{1},\eta^{2}_{1}).
$$
Clearly also $\eta^{1}$ is a good extension of $\hat \gamma$. 

Since $z$ is a point of local inversion, we know that the local (multivalued) inverting map sends sets of positive $\mu$-measure to sets of positive $\mu$-measure. 
It may be the case that it sends $\hat \gamma_{0}$ to $\eta^{2}_{1}$ but  $\gamma_{0}$ is forced to be moved to a point belonging to $\eta^{1}$, if for istance 
$$
\sfd(\gamma_{0}, \eta^{2}_{1}) > \sfd(\gamma_{0},z) + \sfd(z,\eta^{2}_{1}).
$$
In this situation  no information can be obtained from the existence of the local inversion plan. 
\\Notice that the non-branching assumption indeed forces $\eta^{1}$ to coincide with $\eta^{2}$.
\end{remark}

\subsection{Stability}
Only for this section the reference measures will be assumed to be  a probability measure. \\
We consider a sequence of  pointed metric measures spaces $(X_{k}, \sfd_{k}, \mu_{k}, \bar{x}_k)$ with $\mu_{k}$  \emph{almost uniform} reference measure for $(X_{k},\sfd_{k})$.
We will show that if $(X_{k}, \sfd_{k}, \mu_{k}, \bar{x}_k)$ converges to $(X, \sfd, \mu, \bar{x})$ in the pointed measured Gromov-Hausdorff topology, then $\mu$ is an \emph{almost uniform} reference measure for $(X,\sfd)$. 

\begin{definition}\label{D:uniform}
Let $(X,\sfd,\mu)$ be a m.m.s. with $\mu\in  \mathcal{P}(X)$ reference measure for $(X,\sfd)$.
\begin{itemize}
\item We say that  $\mu$ is an \emph{almost uniform} reference measure provided  that  for $\mu$-a.e. $z \in X$ there exist a constant $C_z>0$ and 
an inversion plan $\pi^{z}$ satisfying
\begin{equation}\label{eq:defUni}
(P_{2})_{\sharp} \pi^{z} \leq C_z \; \mu.
\end{equation}
\item We say that  $\mu$ is a \emph{uniform} reference measure provided  that there exists  a constant $C>0$ such that for $\mu$-a.e. $z \in X$ there exists an inversion plan $\pi^{z}$ satisfying \eqref{eq:defUni} with $C_z \equiv C$.
\end{itemize}
\end{definition}

\noindent
Let us briefly recall the notion of pointed measured Gromov-Hausdorff convergence. A map $f : (X,\sfd_{X}) \to (Y,\sfd_{Y})$ between metric spaces 
is an $\varepsilon$-isometry if and only if: 
\begin{itemize}
\item[-] it almost preserves distances: for all $z,w \in X$,
\[
| \sfd_{X}(z,w) - \sfd_{Y}(f(z),f(w))| \leq \ve;
\]
\item[-] it is almost surjective: 
\[
\forall \, y \in Y, \ \  \exists \, x \in X \;\text{ such that} \quad \sfd_{Y}(f(x),y) \leq \ve.
\]
\end{itemize}
The following is a well-known notion of convergence that we will use for pointed m.m.s..

\begin{definition}\label{D:mGH}
Let $(X_{k}, \sfd_{k}, \mu_{k},\bar x_{k}), (X, \sfd, \mu, \bar x)$ be pointed m.m.s. with $\mu_{k} \in \mathcal{P}(X_{k})$, for all $k\in \N$, and $\mu \in \mathcal{P}(X)$. 
We say that  $(X_{k}, \sfd_{k}, \mu_{k},\bar x_{k}) \to (X, \sfd, \mu, \bar x)$ in the \emph{pointed measured Gromov-Hausdorff} topology,  ($pmGH$ for short)
provided there exist sequences $R_{k} \to + \infty$ and $\varepsilon_{k} \to 0$ and Borel maps $f_{k}: X_{k} \to X$ so that
\begin{itemize}
\item[$i)$] $f_{k}(\bar x_{k}) = \bar x$;  \medskip
\item[$ii)$] $\sup\left\{ |\sfd_{k}(x,y)   - \sfd(f_{k}(x), f_{k}(y) )| \, : \, x,y \in B_{R_{k}}(\bar x_{k}) \right\} \leq \varepsilon_{k}$; \medskip
\item[$iii)$]  the $\varepsilon_{k}$-neighborhood of $f_{k}\left( B_{R_{k}}(x_{k}) \right)$ contains $B_{R_{k}-\varepsilon_{k}}(\bar x)$; \medskip
\item[$iv)$] the sequence of probability measures $(f_{k})_{\sharp} \, \mu_{k}$ converges to $\mu$ narrowly; 
i.e  calling $C_b(X)$ the space of bounded continuous functions on $X$ it holds
$$
\lim_{k \to \infty} \int_{X} \varphi \, d \left[ (f_{k})_{\sharp} \, \mu_{k}\right] =  \int_{X} \varphi \, d \mu, \qquad \forall \varphi \in  C_b(X).
$$
\end{itemize}
\end{definition}

In Definition \ref{D:mGH} one can consider also sequences of pointed m.m.s. with measures not necessarily of total mass one. 
In the case of measures with possibly infinite total mass, i.e. Radon measures, one asks the weak convergence of $iv)$ to hold for any $f \in C_{b}(X)$
with bounded support. We can state the stability result. 
Recall that during this section we will always assume all the metric measure spaces to be geodesic, proper, complete and separable.

\begin{theorem}\label{T:stability}
Let $(X_{k}, \sfd_{k}, \mu_{k},\bar x_{k})$ be a sequence of pointed m.m.s.  which converges in  $pmGH$-sense to a limit  pointed m.m.s. $(X,\sfd,\mu, \bar x)$. 
Assume that, for infinitely $k$, $\mu_k$ is an almost uniform reference measure for $X_k$ and that for $\mu$-a.e. $z \in X$ there exists a sequence $\{z_k\}_{k \in \N}$   with  $z_k \in X_k$, such that  
\begin{equation}\label{eq:HpStability}
 \sfd(z, f_{k}(z_{k})) \to 0 \quad \text{and} \quad \liminf_{k \in \N} C_{z_k} < \infty. 
\end{equation} 
Then $\mu$ is an almost uniform  reference measure for $(X,\sfd)$.
\end{theorem}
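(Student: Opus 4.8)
The plan is to obtain, for $\mu$-a.e. $z\in X$, a limit inversion plan $\pi^{z}$ as a narrow limit of the (pushed‑forward) inversion plans $\pi^{z_{k}}$ of the approximating spaces, and then to check that the three defining properties in \eqref{E:reverse} together with the uniform bound \eqref{eq:defUni} survive the limit. Fix $z\in X$ for which a sequence $z_{k}\in X_{k}$ as in \eqref{eq:HpStability} exists. Passing to a subsequence (not relabelled) we may assume that each $\mu_{k}$ is an almost uniform reference measure, that $z_{k}\in\Ip(\mu_{k})$ with $z_{k}\in B_{R_{k}}(\bar x_{k})$, and that $C_{z_{k}}\le C$ for some finite $C=C(z)$ (possible since $\liminf_{k}C_{z_{k}}<\infty$); set $\eta_{k}:=\sfd(z,f_{k}(z_{k}))\to 0$. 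Let $\pi_{k}:=\pi^{z_{k}}\in\mathcal P(X_{k}\times X_{k})$ be an inversion plan with $(P_{1})_{\sharp}\pi_{k}=\mu_{k}$, $\spt(\pi_{k})\subset H_{k}(z_{k})$ and $(P_{2})_{\sharp}\pi_{k}\le C\,\mu_{k}$, and put $\hat\pi_{k}:=(f_{k}\times f_{k})_{\sharp}\pi_{k}\in\mathcal P(X\times X)$. Then $(P_{1})_{\sharp}\hat\pi_{k}=(f_{k})_{\sharp}\mu_{k}$, which converges narrowly to $\mu$ by Definition \ref{D:mGH}, while $(P_{2})_{\sharp}\hat\pi_{k}=(f_{k})_{\sharp}(P_{2})_{\sharp}\pi_{k}\le C\,(f_{k})_{\sharp}\mu_{k}$.

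\textbf{Compactness and marginals of the limit.} Since $(f_{k})_{\sharp}\mu_{k}$ converges narrowly it is a tight family, so $\{(P_{1})_{\sharp}\hat\pi_{k}\}$ is tight; being dominated by $C\,(f_{k})_{\sharp}\mu_{k}$, the family $\{(P_{2})_{\sharp}\hat\pi_{k}\}$ is tight as well. A sequence of probability measures on $X\times X$ whose two marginal families are tight is itself tight, so by Prokhorov we may extract $\hat\pi_{k_{j}}\to\pi^{z}$ narrowly for some $\pi^{z}\in\mathcal P(X\times X)$ (in particular $\pi^{z}$ is a positive Radon measure). Testing against $(x,y)\mapsto\varphi(x)$ with $\varphi\in C_{b}(X)$ gives $(P_{1})_{\sharp}\pi^{z}=\lim_{j}(f_{k_{j}})_{\sharp}\mu_{k_{j}}=\mu$. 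Testing against $(x,y)\mapsto\varphi(y)$ with $\varphi\in C_{b}(X)$, $\varphi\ge 0$, gives
\[
\int_{X}\varphi\,d(P_{2})_{\sharp}\pi^{z}=\lim_{j}\int_{X}\varphi\,d(P_{2})_{\sharp}\hat\pi_{k_{j}}\le C\lim_{j}\int_{X}\varphi\,d(f_{k_{j}})_{\sharp}\mu_{k_{j}}=C\int_{X}\varphi\,d\mu,
\]
whence $(P_{2})_{\sharp}\pi^{z}\le C\,\mu$, so in particular $(P_{2})_{\sharp}\pi^{z}\ll\mu$.

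\textbf{Support in $H(z)$.} Put $g_{z}(x,y):=\sfd(x,z)+\sfd(z,y)-\sfd(x,y)\ge 0$; it is continuous and $H(z)=\{g_{z}=0\}$, so it suffices to show $\int g_{z}\,d\pi^{z}=0$. For fixed $M>0$ the function $g_{z}\wedge M$ is bounded and continuous, hence $\int(g_{z}\wedge M)\,d\pi^{z}=\lim_{j}\int(g_{z}\wedge M)\,d\hat\pi_{k_{j}}$. For $(x,y)\in\spt(\pi_{k})\subset H_{k}(z_{k})$ with $x,y,z_{k}\in B_{R_{k}}(\bar x_{k})$, the almost‑isometry property $ii)$ of $f_{k}$ together with $\eta_{k}=\sfd(z,f_{k}(z_{k}))$ yields
\[
g_{z}(f_{k}(x),f_{k}(y))\le\bigl|g_{z}(f_{k}(x),f_{k}(y))-(\sfd_{k}(x,z_{k})+\sfd_{k}(z_{k},y)-\sfd_{k}(x,y))\bigr|\le 2\eta_{k}+3\varepsilon_{k}.
\]
Combining this estimate, valid on fixed balls, with the tightness of the two marginals of $\hat\pi_{k}$ (which confines all but an arbitrarily small amount of mass to such balls), one gets $\int(g_{z}\wedge M)\,d\hat\pi_{k_{j}}\to 0$ for every $M$, and letting $M\to\infty$ (monotone convergence) $\int g_{z}\,d\pi^{z}=0$. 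Hence $\spt(\pi^{z})\subset H(z)$, so $\pi^{z}$ is an inversion plan at $z$ with $(P_{2})_{\sharp}\pi^{z}\le C_{z}\mu$, $C_{z}:=C$. Since the set of admissible $z$ is $\mu$-conull, $\mu$ is an almost uniform reference measure for $(X,\sfd)$.

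\textbf{Main obstacle.} The delicate step is the one just described: the near‑isometry bound $g_{z}(f_{k}(x),f_{k}(y))\le 2\eta_{k}+3\varepsilon_{k}$ is only available where $x,y$ (and $z_{k}$) lie in $B_{R_{k}}(\bar x_{k})$, since outside this ball $f_{k}$ carries no metric information. One must therefore rule out that a non‑negligible amount of $\pi_{k}$‑mass sits on $X_{k}\setminus B_{R_{k}}(\bar x_{k})$ yet is mapped by $f_{k}\times f_{k}$ into a bounded region of $X\times X$, where it could spoil the estimate $\int(g_{z}\wedge M)\,d\hat\pi_{k}\to 0$. Controlling this requires using the tightness of $(f_{k})_{\sharp}\mu_{k}$ and of the second marginals $(P_{2})_{\sharp}\hat\pi_{k}$ (dominated by $C(f_{k})_{\sharp}\mu_{k}$) together with the properness of the spaces and the surjectivity clause $iii)$ of Definition \ref{D:mGH}; everything else (the marginal identities, the domination $(P_{2})_{\sharp}\pi^{z}\le C\mu$, and the compactness) is a routine consequence of narrow convergence.
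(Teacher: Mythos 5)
Your proposal is correct and its skeleton coincides with the paper's: push the inversion plans forward via $(f_k,f_k)$, obtain tightness from tightness of the two marginals, extract a narrow limit by Prokhorov, and identify the marginals of the limit by testing against $\varphi\circ P_1$ and $\varphi\circ P_2$ (your verification of $(P_1)_\sharp\pi^z=\mu$ and $(P_2)_\sharp\pi^z\le C\mu$ is line-for-line the paper's Claim 1). Where you genuinely diverge is the support condition. The paper's Claim 2 argues set-theoretically: it normalizes $\eta^k$ on balls $B_{R_i}(y)$ chosen so that $\eta(\partial B_{R_i}(y))=0$, uses inner regularity to find compacts $C_k\subset (f_k,f_k)(H_k(z_k))$ carrying mass $\ge 1-1/k$, passes to a Hausdorff limit $C$, invokes upper semicontinuity of weakly converging measures on compacts to get $\xi(C)=1$, and finally shows $C\subset H(z)$ by taking limits in the triangle-defect identity. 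Your route via the defect function $g_z\ge 0$, the truncation $g_z\wedge M$, and the estimate $g_z(f_k(x),f_k(y))\le 2\eta_k+3\varepsilon_k$ on $\spt(\pi_k)$ is cleaner and avoids the compact-set extraction entirely; it buys a shorter, more quantitative argument at the price of having to control the mass of $\pi_k$ sitting outside $B_{R_k}(\bar x_k)^2$, which you correctly flag as the delicate point. Note that the paper's proof silently makes the very same assumption: when it applies property $ii)$ of Definition \ref{D:mGH} to the preimages $(u_k,w_k)$ of points of $C_k$, it needs $u_k,w_k\in B_{R_k}(\bar x_k)$ without justifying it, exactly as you need $\mu_k(X_k\setminus B_{R_k}(\bar x_k))\to 0$. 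So this residual localization issue is shared by both arguments (and is really a feature of the chosen definition of $pmGH$ convergence via Borel almost-isometries), not a gap specific to your proof.
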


\begin{proof}
Let $z \in \spt(\mu)$ be as in the assumption and consider a sequence of points $z_{k} \in X_{k}$ so that $\sfd(z, f_{k}(z_{k})) \to 0$ and, for infinintely many $k$'s,  $z_k$ are inversion points for $\mu_k$.
\\
Up to extracting a suitable subsequence, for each $k\in \N$ we can consider the inversion plan $\pi^{k} \in \mathcal{P}(X_{k}\times X_{k})$ around $z_{k}$; moreover, by assumption,  there exists $C=C_z$ such that 
$$
(P_{1})_{\sharp}\, \pi^{k} =  \mu_{k}, \qquad   (P_{2})_{\sharp}\, \pi^{k} \leq C \mu_{k}, \qquad \spt (\pi^{k}) \subset  H_{k}(z_{k}).
$$
%
%
It is natural to consider  the push-forward of the  inversion plan via the $\varepsilon_{k}$-isometries as follows: 
$$
\eta^{k} : = (f_{k},f_{k})_{\sharp}\, \pi^{k} \in \mathcal{P}(X \times X).  
$$
Since by assumption $(f_k)_\sharp \mu_k \to \mu$, it follows  that both $(P_1)_\sharp \eta_k= (f_k)_\sharp \mu_k$ 
and $(P_2)_\sharp \eta_k \leq C \cdot  (f_k)_\sharp \mu_k$ are tight. 
This in turn implies that $\eta_k \in  \mathcal{P}(X \times X)$ are tight and then, by Prokhorov Theorem, 
they converge narrowly to some $\eta \in \mathcal{P}(X \times X)$, up to subsequences:
$$
\int_{X\times X} \psi(x,y) \, d\eta_k \to  \int_{X\times X} \psi(x,y) \, d\eta, \quad \text{as } k \to \infty, \quad \forall \psi \in C_b(X\times X).
$$
In the rest of the proof we show that $\eta$ is an inversion plan for $\mu$ at $z$.
\\

{\bf Claim 1:} $(P_1)_\sharp \eta=\mu$ and $(P_2)_\sharp \eta \leq C \mu$. \\
First of all observe that  for $\varphi\in C_b(X)$ one has  $\varphi \circ P_i \in C_b(X\times X)$, for $i=1,2$. It follows that, for every  $\varphi\in C_b(X)$, it holds
\begin{eqnarray}
	\int_X \varphi \, d \mu &=&  \lim_{k \to \infty} \int_X \varphi \, d \left[(f_k)_\sharp \mu_k \right] 
 							= \lim_{k \to \infty}   \int_{X} \varphi  \; d \left[(f_k)_\sharp ( {P_1}_\sharp  \pi_k) \right]  \nonumber \\
                    	                	&=& \lim_{k \to \infty}   \int_{X\times X} (\varphi\circ P_1) \, d \eta_k 
                                    			=  \int_{X\times X} (\varphi\circ P_1) \, d \eta 
							=  \int_X \varphi \, d \left[(P_1)_\sharp \eta\right]. \nonumber 
\end{eqnarray}
Since $\varphi\in C_b(X)$ is arbitrary, we infer that $(P_1)_\sharp \eta=\mu$.  By analogous computations, using this time that $(P_2)_\sharp (\pi_k) \leq C \mu_k$, we get that,  for every $\varphi\in C_b(X)$ with $\varphi\geq 0$, it holds
$$ 
\int_X \varphi \, d \left[(P_2)_\sharp \eta\right]
 	=\lim_{k \to \infty} \int_{X\times X} (\varphi \circ P_2)\, d\eta_k 
 	= \lim_{k \to \infty} \int_X \varphi \, d\left[(f_k)_{\sharp}({P_2}_\sharp \pi_k) \right] 
 	\leq C \int_X \varphi \, d \mu. 
$$
Since $\varphi\in C_b(X)$ with $\varphi\geq 0$  is arbitrary, we infer that $(P_2)_\sharp \eta \leq C \mu$ and thus the proof of claim 1 is complete.
\\
 
{\bf Claim 2:}  $\spt (\eta) \subset H(z)$.  
\\Fix a point  $y \in X \times X$ and an  increasing sequence of real numbers $R_{i} \to \infty$ so that 
$$
\eta \left( \partial B_{R_{i}}(y) \right) = 0,  \qquad \forall \, i,k\in \N,
$$
where the ball is taken in $X \times X$. Thus for each $i\in \N$ it holds 
$$
\lim_{k\to \infty} \eta^{k} \left(  B_{R_{i}}(y) \right) = \eta( \left(  B_{R_{i}}(y) \right) ) > 0 .
$$
Consider then, for each $i \in \N$,  the  following  probability measures: 
$$
\xi^{k} : = \left( \eta^{k}(B_{R_{i} }(y)) \right)^{-1} \eta^{k}\llcorner_{B_{R_{i} }(y) }, \qquad \xi : = \left( \eta(B_{R_{i} }(y)) \right)^{-1} \eta\llcorner_{B_{R_{i} }(y) },
$$
and observe that $\xi^{k} \weak \xi$ narrowly. For ease of notation we have ignored the dependence on $i$.
Thanks to the inner regularity of probability measures, for each $k \in \N$ there exists a compact set $C_{k}\subset B_{R_{i}}(y) $ so that 
$$
\xi^{k}(C_{k}) \geq 1 - \frac{1}{k}, \qquad C_{k} \subset (f_{k},f_{k})(H_{k}(z_{k})) \cap B_{R_{i}}(y).
$$
Up to subsequences, there exists $C \subset \bar B_{R_{i}}(y) $ so that $C_{k} \to C$ in the Hausdorff distance. 
Therefore for any $\delta > 0$ there exists $k_{\delta} \in \N$ so that $C_{k} \subset C^{\delta}$, for all $k \geq k_{\delta}$
where $C^{\delta}$ denotes the closed tubular neighborhood of $C$ of radius $\delta$. 
Using upper-semicontinuity over compact sets of weakly converging measures, we infer
$$
\xi(C^{\delta}) \geq \limsup_{k\to \infty} \xi^{k}(C^{\delta}) \geq \limsup_{k \to \infty} \xi^{k}(C_{k}) = 1.
$$
Sending $\delta \to 0$, one obtains $\xi(C) = 1$.  It remains to show that $C \subset H(z)$.
As $C_{k} \to C$, for each $(x,y) \in C$ there exists a sequence $\{ (x_{k},y_{k}) \}_{ k \in \N}$ with 
$$
(x_{k},y_{k}) \in C_{k} \subset (f_{k},f_{k})(H_{k}(z_{k})) \cap B_{R_{i}}(y), \qquad (x_{k},y_{k}) \to (x,y). 
$$
Hence $(x_{k},y_{k}) = (f_{k}(u_{k}), f_{k}(w_{k}))$, with $(u_{k}, w_{k}) \in H_{k}(z_{k})$.
We can deduce from $ii)$ and $iii)$ of Definition \ref{D:mGH} that
\begin{align*}
\sfd(x,y) - \sfd(x,z) - \sfd(z,y)  = &~ \lim_{k \to \infty} \sfd(x_{k},y_{k}) - \sfd(x_{k},f_{k}(z_{k})) - \sfd(f_{k}(z_{k}),y_{k}) \crcr
= &~ \lim_{k \to \infty}  \sfd_{k}(u_{k},w_{k}) - \sfd_{k}(u_{k},z_{k}) - \sfd_{k}(z_{k},w_{k})  \crcr
= &~  0. 
\end{align*}
It follows that $(x,y) \in H(z)$ and therefore $\xi(H(z)) = 1$. Hence $\spt (\eta)  \cap B_{R_{i}}(y) \subset H(z)$.
\\ Letting $R_{i} \to \infty$ yields $\spt (\eta) \subset H(z)$, as desired.
\end{proof}

The first part of the next corollary follows directly from the statement of  Theorem \ref{T:stability}; the second claim is instead a consequence of the proof above.

\begin{corollary}\label{cor:Stab}
\begin{itemize}
\item Let $(X_{k}, \sfd_{k}, \mu_{k},\bar x_{k})$ be a sequence of pointed m.m.s. with $\mu_{k} \in \mathcal{P}(X_{k})$  uniform 
reference measure for $(X_{k},\sfd_{k})$ with the  constant in \eqref{eq:defUni} uniform in $k\in \N$. Assume the existence of a pointed m.m.s. $(X,\sfd,\mu, \bar x)$ so that 
$(X_{k}, \sfd_{k}, \mu_{k}, \bar x_{k}) \to (X, \sfd, \mu, \bar x)$ in the $pmGH$-sense, where $\mu \in  \mathcal{P}(X)$.
Then $\mu$ is a uniform  reference measure for $(X,\sfd)$. 

\item If $\mu$ is a \emph{uniform} reference measure for $(X,\sfd)$ 
then \emph{every} point $z \in \spt(\mu)$ is an inversion point.
\end{itemize}
\end{corollary}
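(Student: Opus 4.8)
The first item essentially follows from Theorem \ref{T:stability}: if the constant $C$ in \eqref{eq:defUni} does not depend on $k$, then every approximating sequence $z_k$ (these exist for every $z\in X$ by the almost-surjectivity requirement in Definition \ref{D:mGH}, and can be chosen among inversion points of the respective $\mu_k$ exactly as in the proof of Theorem \ref{T:stability}) satisfies $\liminf_k C_{z_k}\le C<\infty$, so \eqref{eq:HpStability} holds; inspecting the proof, the inversion plan $\eta$ produced at a given $z$ obeys $(P_2)_\sharp\eta\le C\mu$, whence $\mu$ is a \emph{uniform} reference measure with the same constant $C$.

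For the second item the plan is to run the proof of Theorem \ref{T:stability} with the \emph{constant} sequence $X_k=X$, $\sfd_k=\sfd$, $\mu_k=\mu$ and $f_k=\id$, which trivially $pmGH$-converges to $(X,\sfd,\mu)$. Fix an arbitrary $z\in\spt(\mu)$. Since $\mu$ is a uniform reference measure, there is a constant $C>0$ so that the set of points admitting an inversion plan $\pi$ with $(P_2)_\sharp\pi\le C\mu$ has full $\mu$-measure; as $z\in\spt(\mu)$, each ball $B_{1/k}(z)$ has positive $\mu$-measure and hence contains such a point $z_k$. Thus $z_k\to z$, and we are given inversion plans $\pi^k\in\mathcal P(X\times X)$ with
$$
(P_1)_\sharp\pi^k=\mu,\qquad (P_2)_\sharp\pi^k\le C\,\mu,\qquad \spt(\pi^k)\subset H(z_k).
$$

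Next I would reproduce the two claims of the proof of Theorem \ref{T:stability}. The marginals $(P_1)_\sharp\pi^k=\mu$ and $(P_2)_\sharp\pi^k\le C\mu$ are all tight, so by Prokhorov's theorem $\pi^k\weak\eta$ narrowly along a subsequence. Passing to the limit, for $\varphi\in C_b(X)$, in $\int_{X\times X}(\varphi\circ P_1)\,d\pi^k=\int_X\varphi\,d\mu$ gives $(P_1)_\sharp\eta=\mu$, and passing to the limit, for $\varphi\in C_b(X)$ with $\varphi\ge0$, in $\int_{X\times X}(\varphi\circ P_2)\,d\pi^k\le C\int_X\varphi\,d\mu$ gives $(P_2)_\sharp\eta\le C\mu$. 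Finally, the localisation argument of Claim 2 in that proof --- restrict to balls $B_{R_i}(y)$ in $X\times X$ with $\eta(\partial B_{R_i}(y))=0$, extract compact sets $C_k\subset\spt(\pi^k)\cap B_{R_i}(y)$ carrying asymptotically all the mass, pass to a Hausdorff limit $C_k\to C$, use upper-semicontinuity on compacta of narrowly converging measures, and note that $\sfd(x_k,y_k)-\sfd(x_k,z_k)-\sfd(z_k,y_k)\to\sfd(x,y)-\sfd(x,z)-\sfd(z,y)$ whenever $(x_k,y_k)\to(x,y)$ and $z_k\to z$ --- yields $\spt(\eta)\subset H(z)$. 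Hence $\eta$ is an inversion plan for $\mu$ at $z$, i.e. $z\in\Ip(\mu)$; since $z\in\spt(\mu)$ was arbitrary, this proves the second item.

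The only step that needs genuine care is the inclusion $\spt(\eta)\subset H(z)$, but it is literally Claim 2 in the proof of Theorem \ref{T:stability}, where the base point already varies with $k$ (via $z_k\to z$); everything else is a routine specialisation, since with $f_k=\id$ and $\mu_k=\mu$ the narrow-convergence hypothesis of Definition \ref{D:mGH} is automatic.
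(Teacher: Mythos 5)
Your proof is correct and follows exactly the route the paper intends: the authors only remark that the first item ``follows directly from the statement of Theorem \ref{T:stability}'' and the second ``is a consequence of the proof above'', and your argument — tracking the uniform constant through Claim 1 and rerunning the compactness/Hausdorff-limit argument of Claim 2 with the constant sequence $X_k=X$, $f_k=\id$ and inversion points $z_k\to z$ chosen in $B_{1/k}(z)$ — is precisely that intended filling-in of details.
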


\begin{remark}
It is a challenging  open problem   to replace in Theorem  \ref{T:stability} the assumption that  $\mu_k$ are \emph {almost uniform reference measures} satisfying \eqref{eq:HpStability} by a \emph{uniform lower bound on the Ricci curvature} of $(X_k,\sfd_k,\mu_k)$ (in MCP or CD or RCD sense).  This would have  as remarkable consequence that the limit measure on a Ricci limit space is unique up to multiplication by  an $L^1$ function (note that this last fact was already established by Cheeger-Colding \cite{cc:IIIJDE2000} via a completely  different argument).  
\end{remark}

\subsection{Uniqueness of the limit}\label{Ss:unique}
 
In order to state and prove the uniqueness result, let us introduce a last small piece of notation: a sequence of pointed m.m.s.  
$(X_{k}, \sfd_{k}, \mm_{k}, \bar{x}_k)$ is said to \emph{uniformly} satisfy the 
{qualitative $\MCP$ condition} if there exist a sequence of radii  $R_j\uparrow +\infty$  and  continuous functions 
$f_j:[0,1]\to \R^+$ with $f_j(0)=1$,  $f_j(1)=0$ and $f_j(t)>0$ for any $t \in [0,1)$, $j \in \N$, 
such that the qualitative $\MCP$ condition \eqref{eq:QualMCP} 
holds for  every  $(X_{k}, \sfd_{k}, \mm_{k}, \bar{x}_k)$, $k \in \N$, with these fixed sequence  $\{R_j, f_j\}_{j \in \N}$ .

\begin{theorem}\label{T:unique}
Let $(X_{k}, \sfd_{k}, \mu_{k}, \bar{x}_k)$ be a sequence of essentially non-branching pointed  m.m.s. that uniformly  verify the qualitative $\MCP$ condition in the above sense. 
Assume also that, for every $k \in \N$,  $\mu_{k} \in \mathcal{P}(X_{k})$ is a uniform reference measure for $(X_{k}, \sfd_{k})$, 
with the constant in \eqref{eq:defUni} uniform in $k\in \N$.

Let $(X, \sfd, \mu, \bar x)$ and $(X,\sfd, \xi, \bar y)$ be two limit points in the pointed measured Gromov Hausdorff convergence of 
the two sequences $(X_{k}, \sfd_{k}, \mu_{k}, \bar x_{k})$ and $(X_{k}, \sfd_{k}, \mu_{k}, \bar y_{k})$, respectively.
If $(X,\sfd, \mu)$ and $(X,\sfd,\xi)$ are both essentially non-branching and
$$
\spt(\xi)  = \spt(\mu),
$$
then $\mu \sim \xi$, i.e. $\mu \ll \xi$ and $\xi \ll \mu$.
\end{theorem}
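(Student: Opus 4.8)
The plan is to deduce the statement by assembling the stability result of Corollary~\ref{cor:Stab} with the uniqueness theorem of Theorem~\ref{T:metric}; almost all of the work has already been done in those two results. First I would read off from the hypotheses the structure of the two limit measures. Since for every $k$ the measure $\mu_k$ is a \emph{uniform} reference measure for $(X_k,\sfd_k)$ with a constant in \eqref{eq:defUni} independent of $k$, the first part of Corollary~\ref{cor:Stab}, applied once to $(X_k,\sfd_k,\mu_k,\bar x_k)\to(X,\sfd,\mu,\bar x)$ and once to $(X_k,\sfd_k,\mu_k,\bar y_k)\to(X,\sfd,\xi,\bar y)$, yields that both $\mu$ and $\xi$ are uniform reference measures for $(X,\sfd)$. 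By the second part of the same corollary, \emph{every} point of $\spt(\mu)$ is an inversion point of $\mu$, and likewise for $\xi$, so that
\[
\Ip(\mu)=\spt(\mu),\qquad \Ip(\xi)=\spt(\xi);
\]
by hypothesis these two sets coincide, and I call the common value $\Omega$.

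Next I would check that the hypotheses of Theorem~\ref{T:metric} are available for both triples $(\Omega,\sfd,\mu)$ and $(\Omega,\sfd,\xi)$. Essential non-branching of $(X,\sfd,\mu)$ and of $(X,\sfd,\xi)$ is assumed, and it is inherited by the restriction to the support, which is geodesically convex by Proposition~\ref{Prop:d-convex} (via Remark~\ref{rem:strongMCP}), so that geodesics of $\Omega$ are exactly the geodesics of $X$ with endpoints in $\Omega$. The qualitative $\MCP$ condition \eqref{eq:QualMCP} holds \emph{uniformly} for the sequence, with a fixed choice of $\{R_j,f_j\}$, and passes to the $pmGH$-limit by a standard limiting argument on the defining inequality $\mm(A_{t,o})\geq f_j(t)\,\mm(A)$, entirely analogous to the classical stability of $\MCP(K,N)$ recalled in the introduction; thus $(X,\sfd,\mu)$ and $(X,\sfd,\xi)$ both verify \eqref{eq:QualMCP}. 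Finally, being reference measures for $(X,\sfd)$ concentrated on $\Omega$, each of $\mu$ and $\xi$ is a reference measure for $(\Omega,\sfd)$.

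Then I would apply Theorem~\ref{T:metric} twice. Taking the ambient m.m.s.\ to be $(X,\sfd,\mu)$ and the reference measure to be $\xi$: all hypotheses of Theorem~\ref{T:metric} hold by the previous paragraph, and moreover
\[
\mu\bigl(X\setminus\Ip(\xi)\bigr)=\mu\bigl(X\setminus\spt(\xi)\bigr)=\mu\bigl(X\setminus\Omega\bigr)=0,
\]
so the theorem gives $\mu\ll\xi$. Exchanging the roles of $\mu$ and $\xi$, and using $\Ip(\mu)=\spt(\mu)=\Omega$ in the same way, yields $\xi\ll\mu$. Hence $\mu\sim\xi$, which is the assertion.

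Thus the content of the statement is essentially reduced to Corollary~\ref{cor:Stab} and Theorem~\ref{T:metric}, the key mechanism being that the uniformity of the reference constant forces $\Ip=\spt$, which is precisely what allows us to drop the hypothesis that $\mm$ is concentrated on $\Ip$ in Theorem~\ref{T:metric}. I expect the only genuinely non-routine ingredient to be the stability of the qualitative $\MCP$ condition under $pmGH$-convergence (together with the minor bookkeeping that a uniform reference measure for $(X,\sfd)$ supported on $\Omega$ restricts to a reference measure for $(\Omega,\sfd)$ with the same set of inversion points); everything else is a direct invocation of results already established in the excerpt.
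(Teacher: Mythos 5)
Your proposal is correct and follows essentially the same route as the paper: stability of the uniform qualitative $\MCP$ condition under $pmGH$-convergence (analogous to the stability of $\MCP(K,N)$), Corollary \ref{cor:Stab} to conclude that $\mu$ and $\xi$ are uniform reference measures with $\mathsf{Ip}(\mu)=\mathsf{Ip}(\xi)=\Omega$, and then a double application of Theorem \ref{T:metric} to get $\mu\ll\xi$ and $\xi\ll\mu$. The additional bookkeeping you supply (restriction to the common support, verification of the hypotheses of Theorem \ref{T:metric}) is consistent with, and slightly more explicit than, the paper's argument.
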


\begin{proof}
The proof of the stability of the uniform qualitative $\MCP$  condition is completely analogous to the proof of the stability of $\MCP(K,N)$, see \cite[Theorem 6.8]{Ohta07}. It follows that 
both $(X, \sfd, \mu)$ and $(X,\sfd, \xi)$ verify the qualitative $\MCP$ and both are proper. Hence if $\Omega =\spt (\xi) = \spt(\mu)$, 
then by Corollary \ref{cor:Stab}  both $\mu$ and $\xi$ are  uniform reference measures for $(\Omega, \sfd)$, in particular $\mathsf{Ip}(\mu)=\mathsf{Ip}(\xi)=\Omega$.  Then by Theorem \ref{T:metric} it follows that $\xi \ll \mu$ and $\mu \ll \xi$, i.e. $\xi \sim \mu$. 
\end{proof}


\section{Inversion over Alexandrov spaces with curvature bounded from below}\label{S:InvAlex}

The goal of this section is to show that, given a compact  $n$-dimensional Alexandrov space $(X,\sfd)$, the $n$-dimensional Hausdorff measure $\H^n$ is a reference measure. The idea is to  combine the Lipschitz regularity of the exponential map together with the fact that it maps sets of positive measure to sets of positive measure (see Lemma \ref{L:misurapositiva}), and then 
use  Disintegration Theorem  in order to  construct a ``local'' inversion plan.  Suitably iterating this construction one can obtain a ``global'' inversion plan and prove Theorem \ref{T:Alexandrov-general}.


We first recall few definitions and notations that will be needed only during this section. 
\\Let $(X,\sfd)$ be an Alexandrov space with curvature bounded below by $k \in \R$, that is $(X, \sfd)$ is a complete, locally compact, geodesic space of curvature $\geq - k$
and of Hausdorff dimension $n < \infty$.


\subsection{Exponential map} 
Fix any $p \in X$ and consider the set of segments emanating from $p$
$$
\tilde W_{p} : = \left\{ pq : q \in X \right\},
$$
where $pq : [0,1] \to X$ with $pq \in \Geo$, $pq_0=p$ and $pq_{1} = q$. Denote then with $\tilde \Sigma_{p}$ the set of equivalence classes of minimal segments emanating from $p$
where $pq$ is equivalent to $pr$ if and only if one of $pq$ and $pr$ is contained in the other. The space $\tilde \Sigma_{p}$ has distance naturally 
induced by the angle $\angle$ between minimal segments from $p$ (for the definition of $\angle$ see for instance  \cite{BBI} or \cite{OS94}) . 
The completion of $\tilde \Sigma_{p}$ is the \emph{space of directions} and is denoted with $\Sigma_{p}$. For any minimal segment $pq$ in $X$, 
the symbol $v_{pq}$ denotes the direction at $p$ corresponding to $pq$.
Then for any $pq$ and $pr$, it holds 
$$
\sfd_{\Sigma_{p}}(v_{pq}, v_{pr}) : = \angle qpr. 
$$
For ease of notation we also denote $\sfd_{\Sigma_{p}}$ with $\angle$. We say that  $p$ is a \emph{regular point} if $(\Sigma_{p}, \sfd_{\Sigma_{p}})$ is isometric to the standard $(n-1)$ sphere, $S^{n-1}$.

The \emph{tangent cone} $K_{p}$ is obtained from $[0,\infty) \times \Sigma_{p}$ by identifying together all elements of the form $(0, u_{0})$, with $u_{0} \in \Sigma_{p}$. 
Its elements are denoted by $tu_{0}$ for $t \geq 0$ and $u_{0} \in \Sigma_{p}$. The natural cone distance on $K_{p}$ is given by 
$$
\sfd_{K_{p}}(t u_{0}, sv_{0}) : = \sqrt{ t^{2} + s^{2} - 2st \cos \angle u_{0} v_{0} }.
$$
Equivalently, a point $p \in X$ is  regular if and only if $K_{p}$ is isometric to $\R^{n}$. By considering $\tilde W_{p} \subset K_{p}$ 
one can define the exponential map at $p$ as follows
$$
\textrm{Exp}_{p} : \tilde W_{p} \to X, \qquad \Exp_{p}( |pq| v_{pq}) : = q.
$$
By Toponogov convexity it follows that $\Exp$ is Lipschitz on balls: there exists $L=L(k)>0$ such that  
\begin{equation}\label{eq:ExpLip}
\sfd(\Exp_{p}( v ), \Exp_{p}(u) ) \leq L \; \sfd_{K_{p} } (v,u), \quad \forall u,v \in B_{1}^{K_p}(0),
\end{equation}
with $L = 1$ when $k = 0$.
Denote now with $D(\Exp_{p})$ the identification of $\tilde W_{p}$ in $K_{p} \equiv \R^{n}$.

\begin{lemma}\label{L:domainexp}
Let $p\in X$ be a regular point. Then  
\begin{equation}\label{E:DExp}
\mathcal{L}^{n} \left( B_{1}^{K_{p}}(0) \setminus D( \Exp_{p} ) \right) = 0.
\end{equation}
\end{lemma}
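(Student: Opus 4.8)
The plan is to use the regularity of $p$ to reduce \eqref{E:DExp} to an $\mathcal{L}^{n}$-negligibility statement about directions in $\Sigma_p$, and then to deduce the latter from the fact that the cut locus of $p$ is $\H^{n}$-negligible in $X$. Since $p$ is regular, $K_p$ is isometric to $\R^{n}$ and $\Sigma_p$ to the round sphere $S^{n-1}$; writing $w\in K_p$ in polar form $w=tv$ with $t=\sfd_{K_p}(0,w)$ and $v\in\Sigma_p$, the measure $\mathcal{L}^{n}\llcorner_{B_{1}^{K_p}(0)}$ disintegrates as $t^{n-1}\,dt\llcorner_{[0,1)}\otimes d\sigma(v)$, with $\sigma$ the normalized round measure on $\Sigma_p$. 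The set $\tilde W_p$, and hence $D(\Exp_p)$, is star-shaped with respect to $0$ (a sub-segment through $q$ of a minimal segment $pr$ corresponds to a positive multiple of $|pr|\,v_{pr}$), and by non-branching of geodesics and properness the minimal segments from $p$ in a fixed realized direction $v$ are nested and have a maximal length $\rho(v)$; thus $D(\Exp_p)$ meets the ray through $v$ exactly in $[0,\rho(v)]$, with the convention $\rho(v)=0$ when $v$ is not realized. Writing $\tilde W_p$ as a countable union of compact families of segments makes $D(\Exp_p)$ and $\rho$ measurable, and the polar decomposition gives
\begin{equation*}
\mathcal{L}^{n}\bigl(B_{1}^{K_p}(0)\setminus D(\Exp_p)\bigr)=\int_{\Sigma_p}\int_{\min\{1,\rho(v)\}}^{1}t^{n-1}\,dt\,d\sigma(v),
\end{equation*}
so it is enough to show that $\rho(v)\ge 1$ for $\sigma$-a.e.\ $v$, i.e.\ that $\sigma$-a.e.\ direction at $p$ carries a minimal segment of length at least $1$.

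For this I would pass to $X$ and use the classical structure theory of Alexandrov spaces. The cut locus of $p$ and the set of endpoints of maximal minimal segments through $p$ are both $\H^{n}$-negligible (Otsu--Shioya), so the set $G\subset\bar{B}_{1}(p)$ of points $q$ joined to $p$ by a unique minimal segment which, moreover, extends beyond $q$ has $\H^{n}(\bar{B}_{1}(p)\setminus G)=0$. On $G$ the logarithm $\log_p(q):=|pq|\,v_{pq}$ is well defined, injective, takes values in the radial interior of $D(\Exp_p)\cap B_{1}^{K_p}(0)$, and is a right inverse of $\Exp_p$. Since by \eqref{eq:ExpLip} the map $\Exp_p$ is $L$-Lipschitz on $B_{1}^{K_p}(0)$, it sends $\mathcal{L}^{n}$-null subsets to $\H^{n}$-null subsets; feeding this into the full-measure of $G$ in $\bar{B}_{1}(p)$ and using the quantitative control of $\Exp_p$ on the injectivity domain coming from Toponogov comparison at the regular point $p$, one obtains that $\log_p(G)$ exhausts $B_{1}^{K_p}(0)$ up to an $\mathcal{L}^{n}$-null set, hence $\rho\ge 1$ $\sigma$-a.e. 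Together with the polar formula this proves \eqref{E:DExp}.

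The step I expect to be the main obstacle is exactly this transfer of measure back to the tangent cone. The Lipschitz bound \eqref{eq:ExpLip} controls $\Exp_p$ only from one side — it yields $\H^{n}(\bar{B}_{1}(p))\le L^{n}\,\mathcal{L}^{n}(D(\Exp_p)\cap B_{1}^{K_p}(0))$, which by itself does not force $D(\Exp_p)$ to fill $B_{1}^{K_p}(0)$ — so to close the argument one must use regularity of $p$ in full strength: that $K_p$ is all of $\R^{n}$, that a neighborhood of $p$ is almost Euclidean, and that curvature comparison forbids a positive $\sigma$-measure family of directions along which every minimal segment from $p$ is short. Carrying out this quantitative step, together with the measurability bookkeeping for $D(\Exp_p)$ and $\rho$, is the technical heart of the argument; the star-shaped reduction of the first paragraph is routine.
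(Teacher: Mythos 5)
Your reduction to showing $\rho(v)\ge 1$ for $\sigma$-a.e.\ $v$ is sound, and the one-sided inequality you isolate in your last paragraph, $\H^{n}\bigl(\bar B_{1}(p)\bigr)\le L^{n}\,\mathcal{L}^{n}\bigl(D(\Exp_{p})\cap B_{1}^{K_{p}}(0)\bigr)$, is exactly the paper's key estimate. But the decisive step --- converting this one-sided bound into the conclusion that $D(\Exp_{p})$ fills the ball --- is precisely the step you yourself flag as ``the technical heart of the argument'' and then do not carry out; appealing to ``Toponogov comparison at the regular point'' and ``almost Euclidean neighborhoods'' is a gesture, not a proof. This is a genuine gap, and it is the only nontrivial content of the lemma.

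The paper closes it by rescaling: for $r\in(0,1)$ it applies the same one-sided inequality in $X_{r}=(X,r^{-1}\sfd)$, getting $r^{-n}\H^{n}(B_{r}(p))\le L_{r}^{n}\,\mathcal{L}^{n}\bigl(D(\Exp_{p})\cap B_{1}(0)\bigr)$, where two things improve simultaneously as $r\to 0$: the left-hand side tends to $\omega_{n}$ because $p$ is regular, and $L_{r}\to 1$ because $curv(X_{r})\ge r^{2}k\to 0$. Since $\mathcal{L}^{n}(B_{1}(0))=\omega_{n}$, the inequality is squeezed into an equality and the complement of $D(\Exp_{p})$ must be null. You should also note that your fixed-scale target --- ``$\sigma$-a.e.\ direction at $p$ carries a minimal segment of length at least $1$'' --- cannot follow from regularity of $p$ alone: on a round sphere of tiny radius every point is regular, yet no minimal segment has length $1$. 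What the blow-up argument genuinely delivers (and what is actually used later, in the proof of Lemma \ref{L:misurapositiva}) is the scale-invariant statement that $0$ is a point of $\mathcal{L}^{n}$-density one for $D(\Exp_{p})$; so any correct argument must, like the paper's, work at small scales $r\to 0$ rather than at the fixed scale $1$ where your approach is stuck.
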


\begin{proof}
Consider the cut locus at $p$, $C(p)$, see \eqref{eq:defCutLocus} for its definition. One can prove that $\H^{n}(C(p)) = 0$.
Denote then with $E : = X \setminus C(p)$ and define 
$$
\log_{p} : E \to D(\Exp_{p})\, \subset\, K_{p} , 
$$
so that $\Exp_{p} \circ \log_{p} \llcorner_{E} = \id\llcorner_{E}$. As geodesics do not branch, the definition is well-posed.
Moreover,  recalling \eqref{eq:ExpLip},
 for any $x,y \in E$ it trivially holds
\[
\sfd(x,y) \leq L \, \sfd_{K_{p}}(\log_{p}(x), \log_{p}(y)),
\]
with $L$ depending on $k$ and $n$, such that $L \to 1$ as $k \to 0$.
%
For any $r \in (0,1)$ consider $X_{r}$, the Alexandrov space $X$ endowed with the rescaled distance $r^{-1} \sfd$;  note that $B^{X_{r}}_{1}(p) : =  B_{r}(p)$ and that 
\[
\H^{n}_{X_{r}} = \frac{1}{r^{n}} \H^{n}, \qquad curv(X_{r}) \geq r^{2} k.
\]
Then we have: 
\begin{align*}
\frac{1}{r^{n}} \H^{n}(B_{r}(p)) =  \frac{1}{r^{n}} \H^{n}( E \cap B_{r}(p))  &~  = ~\frac{1}{r^{n}} \H^{n}(E \cap B^{X_{r}}_{1}(p))  \crcr
&~ = ~\H^{n}_{X_{r}}(E\cap B^{X_{r}}_{1}(p))  \crcr
&~ \leq L^n_r \cdot \mathcal{L}^{n}( \log_p(E) \cap B_{1}(0)  ) \crcr
&~ \leq ~ L_{r}^n \cdot \mathcal{L}^{n}(D(\Exp_{p}) \cap B_{1}(0)  ). 
\end{align*}
Since $p$ is a regular point, $ r^{-n} \H^{n}(B_{r}(p)) \to \omega_{n}$, where $\omega_{n}$ is the volume of the $n$-dimensional Euclidean unit ball. Since, as $r\to 0$ we have that  $curv(X_{r})\to 0$, we get that  $L^{n}_{r} \to 1$ and  
the claim follows.
\end{proof}

We also need the following crucial property of the Exponential map. 
Since we don't have a reference for it, we include our proof. 

\begin{lemma}\label{L:misurapositiva}
Let $(X,\sfd)$ be an Alexandrov space with curvature bounded from below and dimension $n$. Fix $p \in X$ a regular point. 
Then for any $A \subset K_{p}$ of positive $n$-dimensional Lebesgue measure, it holds
$$
\H^{n} \left( \Exp_{p}(A) \right) > 0.
$$
\end{lemma}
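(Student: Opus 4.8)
The goal is to show that $\Exp_p$ is non-singular in a measure-theoretic sense: it cannot collapse a set of positive Lebesgue measure in $K_p \cong \R^n$ to a set of zero $\H^n$-measure in $X$. The natural strategy is a density/differentiability argument: at Lebesgue-almost every point $v$ of a set $A \subset K_p$ of positive measure, I want to control $\H^n(\Exp_p(A))$ from below by the measure of a small ball around $v$ pushed forward. The key auxiliary fact, already available, is Lemma~\ref{L:domainexp}, which tells us $D(\Exp_p)$ has full measure in the unit ball of $K_p$; so we may assume $A \subset D(\Exp_p)$ up to a null set, and $\Exp_p$ restricted to $A$ has a genuine (set-theoretic) inverse $\log_p$.

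First I would recall from the proof of Lemma~\ref{L:domainexp} the bi-Lipschitz-type estimate: on $E = X \setminus C(p)$ one has $\sfd(x,y) \le L\,\sfd_{K_p}(\log_p x, \log_p y)$, and conversely, by the Lipschitz regularity \eqref{eq:ExpLip} of the exponential map, $\sfd_{K_p}(v,u)$ controls $\sfd(\Exp_p v, \Exp_p u)$ from below is \emph{not} automatic — but we don't need the lower bound on distances; we only need that $\log_p$ is Lipschitz from $\Exp_p(A)$ into $K_p$. Indeed the displayed inequality in the proof of Lemma~\ref{L:domainexp} says exactly that $\log_p : E \to K_p$ is $L$-Lipschitz (where distances are measured in $\sfd$ on the source and $\sfd_{K_p}$ on the target). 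A Lipschitz map between metric spaces does not increase Hausdorff measure beyond a dimensional constant factor: $\H^n(\log_p(B)) \le L^n \,\H^n(B)$ for every $B \subset E$. Apply this with $B = \Exp_p(A \cap D(\Exp_p))$: since $\log_p(\Exp_p(A \cap D(\Exp_p))) = A \cap D(\Exp_p)$ and this set has positive $\mathcal L^n = \H^n$ measure in $K_p \cong \R^n$ by Lemma~\ref{L:domainexp}, we get
\[
0 < \mathcal L^n(A \cap D(\Exp_p)) = \H^n\big(\log_p(\Exp_p(A\cap D(\Exp_p)))\big) \le L^n \,\H^n\big(\Exp_p(A \cap D(\Exp_p))\big) \le L^n\,\H^n(\Exp_p(A)),
\]
which forces $\H^n(\Exp_p(A)) > 0$, as desired. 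One must also reduce to $A$ inside the unit ball of $K_p$ first (so that \eqref{eq:ExpLip} and Lemma~\ref{L:domainexp} apply): if $\mathcal L^n(A) > 0$ then $\mathcal L^n(A \cap B_1^{K_p}(0)) > 0$ after a rescaling of the Alexandrov space as in the proof of Lemma~\ref{L:domainexp}, or simply by covering $A$ with countably many rescaled balls and noting $\Exp_p$ commutes with radial rescaling.

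\textbf{The main obstacle.} The delicate point is making sure that $\log_p$ is well-defined and Lipschitz precisely on the image $\Exp_p(A)$, i.e.\ that $\Exp_p$ is injective on $A \cap D(\Exp_p)$ (so the inverse exists) — this is where the \emph{regularity} of $p$ and the non-branching of geodesics in Alexandrov spaces enter: for $x \notin C(p)$ there is a unique minimal segment $px$, hence a unique preimage in $D(\Exp_p)$, so $\Exp_p \circ \log_p = \id$ on $E$ and $\log_p \circ \Exp_p = \id$ on $A \cap D(\Exp_p)$. Once this set-theoretic fact is in hand, the rest is the soft ``Lipschitz maps don't blow up Hausdorff measure'' lemma together with the already-proven Lemma~\ref{L:domainexp}. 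So in effect this lemma is almost a corollary of Lemma~\ref{L:domainexp}: the substance was already in showing $D(\Exp_p)$ has full measure and that $\log_p$ is Lipschitz.
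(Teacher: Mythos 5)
There is a fatal gap in the central step: you have read the Lipschitz estimate in the wrong direction. The displayed inequality in the proof of Lemma \ref{L:domainexp}, namely $\sfd(x,y)\le L\,\sfd_{K_p}(\log_p(x),\log_p(y))$, is a \emph{lower} bound on the distance between $\log_p(x)$ and $\log_p(y)$: it says that $\Exp_p$ is $L$-Lipschitz, equivalently that $\log_p$ is \emph{co}-Lipschitz (expanding, up to the factor $1/L$), not that $\log_p$ is Lipschitz. Asserting that $\log_p$ is Lipschitz is exactly the same as asserting that $\sfd(\Exp_p(v),\Exp_p(u))$ is bounded \emph{below} by a multiple of $\sfd_{K_p}(v,u)$ --- the very bi-Lipschitz regularity of the exponential map which you yourself note is ``not automatic'' and which the paper stresses fails on general Alexandrov spaces. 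Consequently, the measure inequality one can legitimately extract from Lemma \ref{L:domainexp} is $\H^{n}(\log_p(B))\ge L^{-n}\,\H^{n}(B)$, which after substituting $B=\Exp_p(A')$ reads $\H^{n}(\Exp_p(A'))\le L^{n}\,\L^{n}(A')$: an \emph{upper} bound on $\H^{n}(\Exp_p(A'))$, useless for the claim. The inequality $\L^{n}(A')\le L^{n}\,\H^{n}(\Exp_p(A'))$ on which your whole chain rests is essentially a quantitative restatement of the lemma itself, so the argument is circular. (The injectivity discussion and the reduction to $D(\Exp_p)$ are comparatively minor and fixable; the direction of the estimate is not.)

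Because of this, the lemma cannot be a two-line corollary of Lemma \ref{L:domainexp}, and the paper's proof is genuinely different. For $k=0$ it exploits that $\Exp_p$ is an isometry on each ray from the vertex of $K_p$: assuming $\H^{n}(\Exp_p(A))=0$, one disintegrates $\H^{n}$ along the geodesics through $p$ (the conditionals being equivalent to $\H^{1}$ on each ray) to conclude that the image of the entire cone $\Span(A)$ over the directions met by $A$ is $\H^{n}$-null; then the density-one property of $\H^{n}$ at the regular point $p$, combined with the non-expanding property of $\Exp_p$ applied to the \emph{complement} of this image, forces $\Span(A)$ to have density zero at the origin, hence the set of directions is $\H^{n-1}$-null and $\L^{n}(A)=0$, a contradiction. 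Both the radial-isometry/disintegration step and the density argument at $p$ are absent from your proposal and cannot be replaced by the (unavailable) Lipschitz bound on $\log_p$.
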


\begin{proof}
We present here only the proof in the case of non-negative curvature. 

{\bf Step 1.} Suppose by contradiction the claim is false and pick $A \subset K_{p}$ contradicting the claim.
For ease of notation we identify $K_{p}$ with $\R^{n}$ and for any $v \in S^{n-1}$ we denote the half line in the direction $v$, 
that is the set $\{ tv : t \geq 0 \}$, with $\Span_{+}\{v\}$.
Without loss of generality we can also assume that 
\[
\mathcal{L}^{1} \left( A \cap \Span_{+}\{v \} \right) > 0, 
\]
for all those $v \in S^{n-1}$ so that $A \cap \Span_{+}\{v \} \neq \emptyset$.

Observing that on each half line $\Span_{+}\{v \}$ the map $\Exp_{p}$ is an isometry, and from Disintegration Theorem 
$$
0 = \H^{n} \left(  \Exp_{p}\left( A \right)  \right)=  \int_{Q}  \left(h_{\alpha } \, \H^{1}\right) \left( \Exp_{p}\left( A \right) \cap \gamma^{\alpha} \right) q(d\alpha), 
$$
where $q$ is the quotient measure of $\H^{n}$ associated to the rays decomposition and 
$h_{\alpha}\, \H^{1}$ the conditional measures associated to the disintegration, 
we obtain that 
$$
q \left( \left\{ \alpha \in Q : \Exp_{p}(A) \cap \gamma^{\alpha} \neq \emptyset  \right\} \right) = 0.
$$
This implies that we can consider the following set: 
\[
\Span(A) : = \{ t \cdot v : t  \in [0,1], \,v \in S^{n-1} \, s.t. \ \exists \,s>0,\, s \cdot v \in A \} \subset \R^{n},
\]
and still obtain that 
$$
\mathcal{L}^{n}\left( \Span(A) \right) > 0, \qquad \H^{n}\left( \Exp_{p}(\Span(A))\right) = 0.
$$

{\bf Step 2.} We now obtain a contradiction using the non-expanding property of $\Exp_{p}$.
Since $p$ is a regular point  the volume of the $B^{X}_{r}(p)$ has the same infinitesimal behavior of $r^{n}$.  So we have 
\begin{align*}
1 & ~ = \lim_{r\to 0} \frac{\H^{n} (B^{X}_{r}(p)) }{\omega_{n } r^{n}}  = \lim_{r\to 0} \frac{\H^{n} \left(B^{X}_{r}(p)  \setminus \Exp_{p}\left( \Span(A) \right)   \right) }{\omega_{n } r^{n}} \crcr
&~ \leq \lim_{r\to 0} \frac{\mathcal{L}^{n}\left( B_{r}(0) \setminus \Span(A) \right)}{\omega_{n } r^{n}},
\end{align*}
where for the last inequality we used Lemma \ref{L:domainexp}, the non-exapanding of $\Exp_{p}$ and $\omega_{n}$ is the volume of the unitary ball in $\R^{n}$.
It follows that $0 \in \R^{n}$ is a point of density $0$ for $\Span(A)$. From the definition of $\Span(A)$ we conclude that this is possible only if 
\[
\H^{n-1} \left( v \in S^{n-1} :  s\cdot v \in A,\ s > 0 \right) = 0.
\]
Since the last identity implies that $\mathcal{L}^{n}(A)=0$ we have a contradiction and the claim is proved.
\end{proof}

\bigskip

\subsection{Inversion plan}
We now show that $\H^{n}$ is a reference measure for an $n$-dimensional  compact Alexandrov space $(X,\sfd)$. More precisely we will construct an inversion plan 
at $\H^n$-a.e. regular point $p \in X$.

\begin{theorem}\label{T:Alexandrov-general}
Let $(X,\sfd)$ be a compact  Alexandrov space with curvature bounded from below by $k$ and Hausdorff dimension $n \in \N$.
Then $\H^{n}$ is a reference measure for $(X, \sfd)$. More precisely we show that  an inversion plan exists at all regular point $p \in X$ so that \eqref{E:manycurves} holds.
\end{theorem}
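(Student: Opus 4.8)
The plan is to construct an inversion plan at a typical regular point $p$ by disintegrating $\H^n$ along the geodesics emanating from $p$, inverting each such geodesic through $p$, and gluing the resulting one-dimensional transport maps together; the key analytic inputs are the Lipschitz regularity of $\Exp_p$ (inequality \eqref{eq:ExpLip}), the fact that $\Exp_p$ maps sets of positive $\mathcal{L}^n$-measure to sets of positive $\H^n$-measure (Lemma \ref{L:misurapositiva}), and the structure of geodesics through $p$ furnished by Proposition \ref{P:manycurves} and the Disintegration Theorem \ref{T:disintr}.

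\textbf{Step 1: reduction to a good set of regular points.} Recall that the regular points have full $\H^n$-measure in an Alexandrov space, and that $\H^n(C(p))=0$ for every $p$. Since $(X,\sfd,\H^n)$ verifies $\MCP(k,n)$ hence the qualitative $\MCP$, and is essentially non-branching, Proposition \ref{P:manycurves} gives that for $\H^n$-a.e.\ $z\in X$ one has $\H^n(X\setminus \ee_{(0,1)}(R(z)))=0$; intersecting with the set of regular points, it suffices to produce an inversion plan at every regular $p$ for which \eqref{E:manycurves} holds. Fix such a $p$.

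\textbf{Step 2: the local inversion plan.} First I would work inside a small ball $B_r(p)$. Using Proposition \ref{P:disintegration} applied to the point $p$ and the measure $\H^n\llcorner_{B_{2r}(p)}$ (or, equivalently, pulling back through $\log_p$ and using the radial disintegration of $\mathcal{L}^n$ on $K_p\equiv\R^n$ together with Lemma \ref{L:domainexp}), we write $\H^n\llcorner_{B_{2r}(p)}=\int_Q \mathfrak{m}^\alpha\, q(d\alpha)$ with $\mathfrak{m}^\alpha\sim \H^1\llcorner_{\gamma^\alpha}$ for $q$-a.e.\ $\alpha$, where $\gamma^\alpha\in\Geo(p)$ is a maximal geodesic through $p$. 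On each ray I define the inverting map: a point at signed arclength $s>0$ from $p$ along $\gamma^\alpha$ is sent to the point at signed arclength $-\sigma(s)$ on the extension of $\gamma^\alpha$ beyond $p$, where $\sigma$ is an affine rescaling chosen so that the target stays inside $B_{2r}(p)$ and inside the domain where the opposite geodesic still minimizes; property \eqref{E:manycurves} guarantees that for $q$-a.e.\ $\alpha$ such an extension of positive length genuinely exists, so $\sigma$ can be taken nondegenerate. Call $T$ the resulting $\H^n$-measurable map on $B_r(p)$ and set $\pi^p_{\mathrm{loc}}:=(\id,T)_\sharp \H^n\llcorner_{B_r(p)}$. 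Then $(P_1)_\sharp\pi^p_{\mathrm{loc}}=\H^n\llcorner_{B_r(p)}$ and, by construction, $\pi^p_{\mathrm{loc}}(X\times X\setminus H(p))=0$. The crucial point is $(P_2)_\sharp\pi^p_{\mathrm{loc}}\ll\H^n$: if $\H^n(A)=0$ then $\H^1(\gamma^\alpha\cap A)=0$ for $q$-a.e.\ $\alpha$; since on each ray $T$ acts as an affine (bi-Lipschitz) reparametrization of the arclength, $\H^1(\gamma^{\alpha'}\cap T^{-1}(A))=0$ for the corresponding ray $\gamma^{\alpha'}$, and integrating back against the disintegration (using $\mathfrak m^{\alpha'}\sim\H^1\llcorner_{\gamma^{\alpha'}}$) gives $\H^n(T^{-1}(A))=0$. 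This is exactly the argument already carried out in the proof of Proposition \ref{P:local}. Hence $p\in\Ip_{\mathrm{loc}}(\H^n)$.

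\textbf{Step 3: globalization.} Since $(X,\sfd)$ need not be globally non-branching, I cannot invoke Proposition \ref{P:local} verbatim; instead I would iterate the construction of Step 2 shell by shell. Decompose $X\setminus C(p)$ into the annuli $C_{nr,(n+1)r}(p)$ and, on each annulus, repeat the disintegration-plus-inversion argument, at stage $n$ sending the portion of each ray lying in $C_{nr,(n+1)r}(p)$ to a sub-segment of the opposite ray inside a region where it still minimizes and is covered by the disintegration of $\H^n$; existence of enough room to land is again supplied by \eqref{E:manycurves}, and the absolute-continuity bookkeeping is the same one-dimensional argument as above, propagated through the ray-to-ray correspondence $\beta$. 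Summing the resulting plans $\eta^n$ over $n\geq 0$ (together with the zeroth piece $\pi^p_{\mathrm{loc}}$) yields $\pi^p:=\pi^p_{\mathrm{loc}}+\sum_{n\geq1}\eta^n\in\M^+(X\times X)$ with $(P_1)_\sharp\pi^p=\H^n$, $\spt(\pi^p)\subset H(p)$ and $(P_2)_\sharp\pi^p\ll\H^n$, exactly as required by Definition \ref{D:referencemeasure}. Thus every regular $p$ satisfying \eqref{E:manycurves} is an inversion point, and since these have full $\H^n$-measure, $\H^n$ is a reference measure for $(X,\sfd)$.

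\textbf{The main obstacle} is Step 3: controlling, on each successive annulus, how far along the opposite geodesic one may push while remaining (i) a minimizing geodesic and (ii) inside the set where the radial disintegration of $\H^n$ has strictly positive density, and checking that these constraints leave a set of rays of full $q$-measure with nondegenerate reparametrization. This is precisely where one needs Lemma \ref{L:misurapositiva} and the cut-locus estimate $\H^n(C(p))=0$, together with the non-branching of geodesics through a regular point, to ensure the ray-to-ray map $\beta$ is well defined $\H^n$-a.e.\ and measure-preserving in the relevant sense. The subtlety that does \emph{not} arise here but does in the essentially non-branching setting of Proposition \ref{P:local} is discussed in the Remark following that proposition; in the present Alexandrov framework the local non-branching near regular points removes it.
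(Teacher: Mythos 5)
Your overall architecture (disintegrate $\H^n$ along $\Geo(p)$, invert ray by ray, glue the pieces) matches the paper's, but there is a genuine gap in the step you call "crucial", namely $(P_2)_\sharp\pi^p\ll\H^n$. Your argument is: $\H^n(A)=0$ implies $\H^1(\gamma^\alpha\cap A)=0$ for $q$-a.e.\ $\alpha$, hence $\H^1(\gamma^{\alpha'}\cap T^{-1}(A))=0$ for the "corresponding" ray, and then you integrate back. But the first assertion only holds off a $q$-null exceptional set $N$ of target rays, and to integrate back you need that the set of source rays $\beta^{-1}(N)$ feeding into $N$ is also $q$-null, i.e.\ you need $\beta_\sharp q\ll q$ for the ray-to-ray correspondence. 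In the proof of Proposition \ref{P:local}, which you cite as "exactly the argument", this is obtained from the \emph{hypothesis} that a local inversion plan already exists ("since $\beta$ was also associated with the local inversion plan, $q(\beta^{-1}(Q_1(A)))=0$"). Here you are constructing the inversion plan from scratch, so invoking that step is circular: the absolute continuity of $\beta$ at the quotient level is precisely the angular part of what must be proved. The paper closes this gap differently: it shows $\H^n(F_m^{-1}(A))>0$ and $\H^n(A)=0$ together force $\L^n(\log_p(F_m^{-1}(A)))>0$ (Lipschitzness of $\Exp_p$) and $\L^n(\log_p(A))=0$ (Lemma \ref{L:misurapositiva}), which is absurd because in the tangent cone $K_p\equiv\R^n$ the inversion is the antipodal map composed with a line-wise rescaling, an explicit transformation of $\R^n$ preserving Lebesgue-null sets. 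So Lemma \ref{L:misurapositiva} is not merely background for "the main obstacle" as you suggest; it is the engine of the absolute continuity proof, deployed in the tangent cone rather than along individual rays.

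A second, related issue is your globalization by metric annuli $C_{nr,(n+1)r}(p)$. The difficulty is not the distance from $p$ but the amount by which the geodesic through a given point extends past $p$ while remaining minimizing: within one annulus this varies from ray to ray and can be arbitrarily small, so there is no uniform nondegenerate rescaling $\sigma$ on an annulus. The paper stratifies instead by extendability, writing $R_p=\bigcup_m R_{p,m}$ where $R_{p,m}$ consists of points whose geodesic reaches $p$ at parameter $t\le 1-1/m$; on each stratum the inversion $F_m(\gamma^\alpha_t)=\gamma^{\beta(\alpha)}_{1-t}$ lands in $H(p)$ with a controlled margin, and the strata are invariant under moving toward $p$. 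You correctly identify this as the main obstacle but your proposed decomposition does not resolve it; replacing the annuli by the extendability strata $R_{p,m}$, and the one-dimensional absolute continuity argument by the tangent-cone argument via $\log_p$ and Lemma \ref{L:misurapositiva}, would complete the proof along the paper's lines.
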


\begin{proof}
Since $(X, \sfd)$ is compact and it is well known that $(X,\sfd, \H^{n})$ is non-branching and  verifies $\MCP(k,n)$.
We will construct an inversion plan for any regular point $p$ such that \eqref{eq:Rp} holds true.

{\bf Step 1.}
From Proposition \ref{P:manycurves} it follows that for $\H^{n}$-a.e. $p \in X$
\begin{equation}\label{eq:Rp}
\H^{n} \left( {\rm e}_{(0,1)}(R(p) ) \right) = \H^{n}(X),
\end{equation}
where $R(p) = \{ \gamma \in \Geo : \gamma_{t} = p, \text{ for some } t \in (0,1) \}$. 
So we fix once and for all a regular point $p \in X$ so that \eqref{eq:Rp} holds true.
For ease of notation we denote with $R_{p}$ the set ${\rm e}_{(0,1)}(R(p))$. 
We perform the disintegration of $\H^{n}\llcorner_{R_{p}}$ as in Proposition \ref{P:disintegration}:
$$
\H^{n} \llcorner_{R_{p} } = \int_{S} \eta^{\alpha} \, q(d\alpha), \quad \eta^{\alpha}(\gamma^{\alpha} ) = 1, \quad q-a.e. \, \alpha \in S,
$$
where $\gamma^{\alpha} \in \Geo(p)$ is the unique geodesic in $\Geo(p)$ through $\alpha$, $S = Q(R_{p})$ is the quotient set and $Q$ the quotient map.
Observe that convexity of $R_{p}$ is not needed to apply Disintegration theorem, we only use that for each point in $z \in R_{p}$, any intermediate point between $p$ and $z$ 
belongs to $R_{p}$. The same property permits to use $\MCP$ to obtain the absolute continuity of the conditional measures with respect to the Hausdorff measure of dimension 1.

We can write $R_{p}$ in the following way: 
$$
R_{p} = \bigcup_{m = 2}^{\infty} {\rm e}_{(0,1)}\left( \left\{ \gamma \in \Geo : \gamma_{t} = p,  \, t \leq 1 - 1/m \right\} \right) = \bigcup_{m = 2}^{\infty} {\rm e}_{(0,1)} \left( R_{m}(p) \right) 
= \bigcup_{m = 2}^{\infty} R_{p,m}.
$$
Note that  if $\gamma \in \Geo(p)$ is so that $\gamma_{0} \in R_{p,m}$, then $\gamma_{s} \in R_{p,m}$ for all $s \in [0,1)$.
Indeed $\gamma_{0} \in R_{p,m}$ implies the existence of $\xi \in \Geo$ so that 
$$
\xi_{0} = \gamma_{0}, \quad \xi_{t} = p.
$$
with $t \leq 1- 1/m$. Then for any $\bar s \in (0,1)$ there exists $\tau < t$ so that $\xi_{\tau} = \gamma_{\bar s}$. Consider $\sigma(\tau) = t + (1-t)(t-\tau)t^{-1}$ and 
define the new curve
$$
\xi^{\tau}_{s} : = \xi_{\tau(1-s) + \sigma(\tau) s}.
$$
Being a subset of $\xi$, it follows that  $\xi^{\tau} \in \Geo$, moreover $\xi^{\tau}_{0} = \gamma_{\bar s}$ and $\xi^{\tau}_{t} = p$. Hence $\gamma_{\bar s } \in R_{p,m}$.
It follows that for each $\alpha \in S$ either $\gamma^\alpha_{s} \notin R_{p,m}$ for all $s \in [0,1)$ (but this will never happen because of the definition of $R_{p}$) or there exists a minimal $s(\alpha)\in [0,1)$ so that 
$$
\gamma^{\alpha}_{s} \in R_{p,m}, \, \forall\, s \in [s(\alpha), 1), \qquad \gamma^{\alpha}_{s} \notin R_{p,m}, \, \forall s < s(\alpha).
$$
As $R_{m,p}$ can be written as countable union of closed sets, $\alpha \mapsto s(\alpha)$ is measurable. 
Therefore we have the following representation: 
$$
\H^{n} \llcorner_{R_{p,m}} = \int_{S} \eta_{m}^{\alpha} \, q_{m}(d\alpha), \quad \eta_{m}^{\alpha}(\gamma^{\alpha,m} ) = 1, \quad q_{m}\text{-a.e. } \, \alpha \in S,
$$
where $\gamma^{\alpha,m}$ is the reparametrization of $\gamma^{\alpha}$ such that $\gamma^{\alpha,m}_{0} = \gamma^{\alpha}_{s(\alpha)}$ and 
$\gamma^{\alpha,m}_{1} = p$.
To keep notation simple, we will omit the dependence  on $m$ of the geodesic, so $\gamma^{\alpha,m} = \gamma^{\alpha}$.

{\bf Step 2.}
From the definition of $R_{p,m}$ and the non-branching property of $(X,\sfd)$, for $q_m$-a.e.  $\alpha \in S$ there exists a unique $\beta \in S$, 
that we will denote with $\beta(\alpha)$, such that 
\begin{equation}\label{E:invertinH}
\{ (\gamma_{s}^{\alpha}, \gamma^{\beta}_{t}) \in X\times X : s \in [0,1], t \in I(\alpha) \} \subset H(p), 
\end{equation}
with $I(\alpha) : = \left\{ s\in [0,1] :  s \geq 1 - L(\gamma^{\alpha}) \left( (m-1)L(\gamma^{\beta(\alpha)}) \right)^{-1} \right\}$, 
where $L$ denotes the length of the geodesic.
Indeed notice that for $q_m$-a.e.  $\alpha \in S$ there exists, as before, $\xi \in \Geo$ so that 
$$
\xi_{0} = \gamma^{\alpha}_{0}, \quad \xi_{t} = p, \quad \xi_{1} \in \gamma^{\beta(\alpha)}_{(0,1)}, \quad \sfd(p,\xi_{1}) \geq \frac{1}{m-1} L(\gamma^{\alpha})
$$
and moreover $\{ (\xi_{s},\xi_{\tau}) : s\leq t \leq \tau \} \subset H(p)$. 
From the bound $t \leq 1-1/m$ it is immediate to check that if $s \in I(\alpha)$ then
$$
\gamma^{\beta(\alpha)}_{s } \in \xi_{(t,1)} 
$$
and therefore \eqref{E:invertinH} is proved. For the next step we find convenient the following abuse of notation: 
we denote with $\gamma^{\beta(\alpha)}$ the reparametrization of the whole geodesic $\gamma^{\beta(\alpha)} \in \Geo(p)$ to the interval $I(\alpha)$.
\bigskip

{\bf Step 3.} We construct the inversion plan. First of all observe that  $R_{p,1}= \emptyset$; for any $m \geq 2$ and 
for $q_m$-a.e.  $\alpha \in S$, we consider the coupling $\pi_m\in \mathcal{P}(X \times X)$ defined by
$$
\pi_m : = (\id, F_{m})_{\sharp} \H^{n}\llcorner_{(R_{p,m} \setminus \bigcup_{l=1}^{m-1} R_{p,l})},
$$
where
$$
F_m : R_{p,m} \longmapsto R_{p,m},  \qquad F_m(\gamma^{\alpha}_{t}) : = \gamma^{\beta(\alpha)}_{1-t}.
$$
%
%
From Step 2, $\pi_m$ is well defined and by construction we have 
$$
\pi_m(X \times X \setminus H(p)) =0 \quad \textrm{and} \quad (P_{1})_{\sharp} \pi_m = \H^{n}\llcorner_{(R_{p,m} \setminus \bigcup_{l=1}^{m-1} R_{p,l})}.
$$
The fact that  $(P_{2})_{\sharp} \pi_{m} \ll \H^{n}$ follows easily from Lemma \ref{L:misurapositiva}: indeed if  by contradiction it was false, then there would exist a subset 
$A \subset R_{p,m}$ so that 
$$
(P_{2})_{\sharp} \pi_m (A) > 0, \qquad \H^{n}(A) = 0, 
$$
that can be restated as $\H^{n}(F_{m}^{-1}(A)) > 0$ and $\H^{n}(A) = 0$. 
But since $\Exp_{p}$ is locally Lipschitz we have on one hand $\mathcal{L}^{n}\left(\log_{p}(F_{m}^{-1}(A))\right) > 0$ and on the other hand, from Lemma \ref{L:misurapositiva}, 
$\mathcal{L}^{n}\left(\log_{p}(A)\right) = 0$. 
Now we can map $\log_{p}(A)$ to $\log_{p}(F_{m}^{-1}(A))$ using the inversion around the origin composed with a rescaling on each line, therefore either both have  
Lebesgue measure $0$ or Lebesgue measure strictly positive. Hence we have a contradiction and $(P_{2})_{\sharp} \pi_{m} \ll \H^{n}$.

The desired inversion plan is then given by summation:
$$\pi:=\sum_{m=2}^\infty \pi_m. $$
Indeed, since $R_p$ has full measure by construction,  we have
$$(P_1)_\sharp(\pi)=\sum_{m=2}^\infty  \H^{n}\llcorner_{(R_{p,m} \setminus \bigcup_{l=1}^{m-1} R_{p,l})}= \H^{n}\llcorner_{\bigcup_{m=2}^\infty R_{p,m}} =  \H^{n}\llcorner_{R_p}=  \H^{n};$$
it is also clear that $(P_2)_\sharp(\pi) \ll \H^n$ and $\pi(X\times X \setminus H(p))=\sum_{m=2}^{\infty} \pi_m(X\times X \setminus H(p))=0.$

\end{proof}

The following is a straightforward consequence of Theorem \ref{T:metric} and Theorem \ref{T:Alexandrov-general}.

\begin{corollary}
Let $(X,\sfd)$ be a compact Alexandrov space with curvature bounded from below by $k$ and with Hausdorff dimension $n \in \N$. 
Let $\mm$ be a Borel finite measure on $X$ so that $(X,\sfd, \mm)$ verifies the qualitative $\MCP$ condition \eqref{eq:QualMCP}.
\noindent
If $\mm$ is concentrated on those regular points of $X$ such that $\H^{n}$ verifies \eqref{E:manycurves}, then 
$$
\mm \ll \H^{n}.
$$
\end{corollary}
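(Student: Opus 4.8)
The plan is to obtain the conclusion as a direct application of Theorem \ref{T:metric}, taking the reference measure there to be $\mu:=\H^{n}$ (the $n$-dimensional Hausdorff measure of $X$) and $\mm$ the given measure; so the whole task reduces to checking the hypotheses of Theorem \ref{T:metric}.

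First I would collect the structural facts that enter the proof of Theorem \ref{T:Alexandrov-general}: a compact $n$-dimensional Alexandrov space with curvature $\geq k$ is non-branching, and $(X,\sfd,\H^{n})$ satisfies $\MCP(k,n)$, hence the qualitative $\MCP$ condition \eqref{eq:QualMCP}. Since $(X,\sfd)$ is non-branching, every geodesic is non-branching, so by Definition \ref{D:essnonbranch} both $(X,\sfd,\H^{n})$ and $(X,\sfd,\mm)$ are essentially non-branching — for the latter there is nothing to check beyond non-branching of $X$, while the qualitative $\MCP$ for $(X,\sfd,\mm)$ is assumed. Next comes the key input: Theorem \ref{T:Alexandrov-general} states that $\H^{n}$ is a reference measure for $(X,\sfd)$ and, more precisely, that an inversion plan exists at every regular point $p\in X$ at which \eqref{E:manycurves} (equivalently \eqref{eq:Rp}) holds. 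Denoting by $G$ the set of such points, this says $G\subseteq\mathsf{Ip}(\H^{n})$. By hypothesis $\mm$ is concentrated on $G$, hence $\mm\bigl(X\setminus\mathsf{Ip}(\H^{n})\bigr)=0$, which is precisely the last hypothesis of Theorem \ref{T:metric}. Applying that theorem then yields $\mm\ll\H^{n}$.

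The only point that is not a pure citation is the matching of supports, and this is where I expect whatever minor friction there is. Theorem \ref{T:metric} asks for a reference measure on $\Omega:=\spt(\mm)$, which need not equal $X$. If $\Omega=X$ there is nothing to do. Otherwise, since $(X,\sfd,\mm)$ is essentially non-branching and verifies the qualitative $\MCP$, Remark \ref{rem:strongMCP} together with Proposition \ref{Prop:d-convex} gives that $\Omega$ is weakly geodesically convex, hence a geodesic metric space; as $X$ is non-branching so is $\Omega$, and therefore $(\Omega,\sfd,\H^{n}\llcorner_{\Omega})$ is essentially non-branching and inherits the qualitative $\MCP$ from $(X,\sfd,\H^{n})$. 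It then remains to see that $\H^{n}\llcorner_{\Omega}$ is a reference measure for $(\Omega,\sfd)$; the natural attempt is to restrict to $\Omega\times\Omega$ the inversion plan $\pi^{p}$ produced at $p\in G\cap\Omega$ in Theorem \ref{T:Alexandrov-general}, but one must then argue that the first marginal is not lost in the restriction — a point that is not completely automatic. Cleaner, and what I would actually do, is to observe that the proof of Theorem \ref{T:metric} transports mass only between points of $\Omega$ and inverts only through points of $\Omega$, so it applies verbatim with $\mu=\H^{n}$ (a reference measure for $(X,\sfd)$ by Theorem \ref{T:Alexandrov-general}) in place of $\H^{n}\llcorner_{\Omega}$; on this route no support-matching is needed. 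In either case this is the one step calling for care, everything else being an immediate combination of Theorems \ref{T:metric} and \ref{T:Alexandrov-general}.
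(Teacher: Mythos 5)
Your proposal is correct and follows exactly the route the paper intends: the paper gives no argument beyond declaring the corollary ``a straightforward consequence of Theorem \ref{T:metric} and Theorem \ref{T:Alexandrov-general}'', and your verification of the hypotheses of Theorem \ref{T:metric} with $\mu=\H^{n}$ (non-branching of $X$, $\MCP(k,n)$ for $\H^{n}$, and $\mm(X\setminus\mathsf{Ip}(\H^{n}))=0$ from the concentration hypothesis) is precisely what that declaration leaves implicit. Your extra care about the case $\spt(\mm)\subsetneq X$ is a reasonable refinement that the paper glosses over.
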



\section{Examples and applications}\label{S:example}

In this final section, we give ''smooth'' examples where the inversion plan exists at every point; as a consequence, the application of Theorem  \ref{T:metric} will be very neat. We will consider the following examples: smooth Riemannian manifolds, Alexandrov spaces with bounded curvature, and the Heisenberg group endowed with the Carnot-Carath\'eodory distance and the Haar measure.

\subsection{Example 1: Riemannian manifolds}

\begin{proposition}[The volume measure is reference] \label{prop:VolRiem}
  Let $(M,g)$ be a complete (non necessarily compact but connected and  without boundary) $n$-dimensional smooth Riemannian manifold and   denote respectively with  $\sfd_g, \mu_g$ the Riemannian distance  and the volume measure induced by $g$.  Then $\mu_g$ is a reference measure for $(M, \sfd_g)$, in the sense of Definition \ref{D:referencemeasure}. More precisely, we show that there exists an inversion plan at every  point, i.e. ${\sf Ip}(\mu_g)=M$. 
\end{proposition}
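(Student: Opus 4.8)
The plan is to produce, for every $p\in M$, an explicit inversion plan of the form $\pi^{p}=(\id,T_{p})_{\sharp}\mu_{g}$, where $T_{p}$ is a ``geodesic reflection through $p$''. Fix $p$ and let $C(p)$ be its cut locus; recall that $C(p)$ is $\mu_{g}$-negligible, that $M':=M\setminus(C(p)\cup\{p\})$ is open of full measure, and that $\log_{p}\colon M'\to D_{p}\setminus\{0\}\subset T_{p}M$ is a diffeomorphism with inverse $\exp_{p}|_{D_{p}}$, where $D_{p}$ is the open star-shaped set of vectors along which the radial geodesic is minimizing. For $x\in M'$ set $u(x)=\log_{p}(x)$, $\omega(x)=u(x)/|u(x)|$, and let $w(x)\in T_{x}M$ be the unit initial velocity of the unique minimizing geodesic from $x$ to $p$. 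Since $x\notin C(p)$ is equivalent to $p\notin C(x)$, that geodesic is still minimizing for a while after passing through $p$, so
\[
\tilde\beta(x):=\mathrm{cutdist}\bigl(x,w(x)\bigr)-\sfd(x,p)>0 ,
\]
and $\tilde\beta$ is continuous on $M'$, by continuity of the cut-time function on the unit tangent bundle and of the maps $x\mapsto w(x)$ and $x\mapsto\sfd(x,p)$.

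Next I would fix an extension length $0<\ell(x)<\min\{\tilde\beta(x),1\}$ and define $T_{p}(x):=\exp_{p}\bigl(-\ell(x)\,\omega(x)\bigr)$; geometrically one follows the minimizing segment from $x$ to $p$ and continues past $p$ for the additional length $\ell(x)$. Because $\ell(x)<\tilde\beta(x)$, the whole geodesic from $x$ to $T_{p}(x)$ through $p$ is minimizing; hence $\sfd(p,T_{p}(x))=\ell(x)$ and $\sfd(x,T_{p}(x))=\sfd(x,p)+\sfd(p,T_{p}(x))$, i.e. $(x,T_{p}(x))\in H(p)$, and moreover $-\ell(x)\omega(x)$ lies in the interior of $D_{p}$. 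To obtain $(T_{p})_{\sharp}\mu_{g}\ll\mu_{g}$ one writes $T_{p}=(\exp_{p}|_{D_{p}})\circ\Phi\circ\log_{p}$, where $\Phi$ is given in polar coordinates on $T_{p}M$ by $\Phi(r,\omega)=(g(r,\omega),-\omega)$ with $g(r,\omega):=\ell(\exp_{p}(r\omega))$. The maps $\exp_{p}|_{D_{p}}$ and $\log_{p}$ are diffeomorphisms, hence null-set preserving in both directions, and $\Phi$ preserves Lebesgue-negligible sets as soon as $g$ is continuous and, for each $\omega$, $r\mapsto g(r,\omega)$ is $C^{1}$ with derivative locally bounded away from $0$ (so $\Phi$ is locally bi-Lipschitz; a Fubini argument on the polar decomposition of Lebesgue measure then concludes). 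Such a $g$ exists: take a smooth $\beta_{1}$ with $0<\beta_{1}<\min\{\tilde\beta,1\}$ on $M'$ (partition of unity) and set $g(r,\omega)=\tfrac12\int_{r}^{c_{+}(\omega)}\beta_{1}(\exp_{p}(s\omega))\,(1+s)^{-2}\,ds$, where $c_{+}(\omega)=\mathrm{cutdist}(p,\omega)\in(0,\infty]$; then $\partial_{r}g=-\tfrac12\beta_{1}(1+r)^{-2}<0$, the integral converges even when $M$ is noncompact (here $\beta_1\le1$ is used), and $\ell:=g\circ\log_{p}$ has all the required properties.

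Finally I would assemble $\pi^{p}:=(\id,T_{p})_{\sharp}\mu_{g}$: it is a positive Radon measure on $M\times M$ (locally finite since $\mu_{g}$ is and $P_{1}$ maps compacts to compacts), $(P_{1})_{\sharp}\pi^{p}=\mu_{g}$, it is concentrated on $\{(x,T_{p}(x)):x\in M'\}\subset H(p)$, and $(P_{2})_{\sharp}\pi^{p}=(T_{p})_{\sharp}\mu_{g}\ll\mu_{g}$ by the previous step; hence $\pi^{p}$ is an inversion plan at $p$, and since $p$ was arbitrary, $\mathsf{Ip}(\mu_{g})=M$ and $\mu_{g}$ is a reference measure. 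The main obstacle is the choice of $\ell(x)$: one cannot simply take the antipodal reflection $x\mapsto\exp_{p}(-\log_{p}x)$, because for $x$ far from $p$ the doubled geodesic need not be minimizing, and there is in general no uniform positive lower bound for $\tilde\beta$ — it degenerates near $C(p)$, as on the round sphere — so $\ell$ must be genuinely $x$-dependent and vanish near the cut locus, and one must still arrange enough regularity (local bi-Lipschitz behaviour after transporting through $\exp_{p}$, $\log_{p}$) to preserve negligible sets. On Cartan--Hadamard manifolds $C(p)=\emptyset$ and everything trivializes (even $x\mapsto\exp_{p}(-\log_{p}x)$ works); the general case, and already the compact one, requires the care above.
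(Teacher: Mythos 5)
Your construction is correct and rests on the same idea as the paper's proof: both take $\pi^{p}=(\id,T)_{\sharp}\mu_{g}$ for a measurable geodesic reflection $T$ through $p$ which extends each minimizing segment from $x$ to $p$ past $p$ by an amount kept strictly inside the minimizing regime, and both then reduce $(P_{2})_{\sharp}\pi^{p}\ll\mu_{g}$ to the statement that $T$ pulls null sets back to null sets. The implementations differ. The paper does not build a continuous extension length: it exhausts $M\setminus C(p)$ by compact sets $K_{n}$ on which the geodesic $\gamma_{xp}:[0,1]\to M$ extends minimally to $[0,1+\frac{2}{n}]$, defines $\Phi_{n}(x)=\gamma_{xp}(1+\frac{1}{n})$ (bi-Lipschitz on $K_{n}$ via the exponential map at $p$), glues these into a piecewise map, and concludes by summing over countably many bi-Lipschitz pieces. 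You instead produce a single continuous map $T_{p}$ with an explicit radial extension length and verify absolute continuity by a polar-coordinate Fubini argument using only the one-dimensional, non-degenerate $C^{1}$ dependence of $\ell$ along each ray; this avoids the gluing and needs no angular regularity (your aside ``so $\Phi$ is locally bi-Lipschitz'' is neither needed nor quite available, since $c_{+}(\omega)$ is only continuous in $\omega$ — but the Fubini argument is what carries the proof and it is sound). One detail you assert without justification and should spell out: the required bound $\ell(x)<\tilde\beta(x)$, i.e.\ $g(r,\omega)<\tilde\beta(\exp_{p}(r\omega))$, does not follow from $\beta_{1}<\tilde\beta$ alone, because the integral defining $g(r,\omega)$ samples $\beta_{1}$ at points $\exp_{p}(s\omega)$ with $s>r$. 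It does hold because $s\mapsto\tilde\beta(\exp_{p}(s\omega))$ is non-increasing along each ray (a sub-segment of a minimizing geodesic is minimizing, so any admissible extension past $p$ for $\exp_{p}(s\omega)$ is admissible for $\exp_{p}(s'\omega)$ with $s'<s$), whence $g(r,\omega)\le\tfrac12\sup_{s\in[r,c_{+}(\omega))}\beta_{1}(\exp_{p}(s\omega))\le\tfrac12\min\{\tilde\beta(\exp_{p}(r\omega)),1\}$; with this line added the argument is complete.
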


\begin{proof}
For every $z \in M$ we have to construct an inversion plan $\pi^z$ satisfying \eqref{E:reverse}. 

{\bf Step 1}. 
First of all we claim  that there exist compact subsets $K_n \subset M$, $n \in \N$, satisfying the following:
\begin{itemize}
\item $\mu_g(M\setminus \bigcup_{ n \in \N} K_n)=0$; 
\item for every $x \in K_n$, there exists a unique minimizing geodesic $\gamma_{xz}:[0,1]\to M$ from $x$ to $z$ and it  is extendable to $[0,1+\frac{2}{n}]$ as minimizing geodesic;
\item the map $\Phi_n:K_n \to M$ defined by $\Phi_n(x):=\gamma_{xz}(1+\frac{1}{n})$ is bi-Lipschitz onto its image. 
\end{itemize}
In order to prove the above properties, first of all recall that the cut locus of $z$ has null measure, i.e. $\mu_g(C(z))=0$. 
Moreover for every $x \in M\setminus C(z)$ it is known that $z \notin C(x)$ (see for instance \cite[Lemma 2.1.11] {Kling}), 
so that for every $x \in M\setminus C(z)$ there exists a unique minimizing geodesic $\gamma_{xz}:[0,1]\to M$ 
from $x$ to $z$ and it is extendable (still as minimizing geodesic) to a bigger interval $[0, 1+\varepsilon]$ for some $\varepsilon=\varepsilon(x)>0$. 
Therefore, called
\begin{align}\label{eq:defKn}
\tilde{K}_n := \bigg\{ x \in M \, : \,& \exists ! \gamma_{xz}\in \Geo,  \text{ $\gamma_{xz}:[0,1]\to M$ geodesic from $x$ to $z$}, \big. \crcr 
&~ \big. \text{and it is extendable to  $\Big[0, 1+\frac{2}{n}\Big]$} \bigg\},
\end{align}
we get that $M\setminus C(z)=\bigcup_{n \in \N} \tilde{K}_n$, 
so  $\mu_g(M\setminus \bigcup_{ n \in \N} \tilde{K}_n)=0$; notice also that $K_n\subset K_{n+1}$, for every $n \in \N$. 
By continuous dependence on initial data in the geodesic equation (see for instance the proofs of \cite[Lemma 2.1.5,Proposition 2.1.10]{Kling}), 
one has that   $\tilde{K}_n\subset M$ is a closed subset. 
Hence, called 
$$
K_n:= \tilde{K}_n \cap \overline{B_n(z)},
$$
we get that $K_n\subset M$ is a compact subset for every $n \in \N$, and  $\mu_g(M\setminus \bigcup_{ n \in \N} K_n)=0$. 
The second claim is clear from the definition \eqref{eq:defKn} and the third claim follows by  the compactness of $K_n$ 
and the fact that the exponential map  centered at $z$ is  a diffeomorphism once restricted to $M\setminus C(z)$.
 \\
 
{\bf Step 2} Define a map $\Phi:M\to M$ in the following way: 
\begin{itemize}
\item if $x \in \bigcup_{n \in \N} K_n$,  called $n_0:=\min\{n\in \N \,: \, x \in K_{n} \}$, set $\Phi(x):=\Phi_{n_0}(x)$;
\item if $x \in M\setminus \bigcup_{n\in \N} K_n$, set $\Phi(x):=z$. 
\end{itemize}
Now let $\pi^z:=(Id, \Phi)_\sharp (\mu_g)$. Clearly we have $(P_1)_\sharp(\pi^z)=\mu_g$ and, by construction, $\pi^z$ is concentrated on  $H(z)$. Finally we also infer that $(P_2)_\sharp (\pi^z)\ll \mu_g$, indeed for every $E \subset M$ with $\mu_g(E)=0$ we have
\begin{eqnarray}
\Big[(P_2)_\sharp(\pi^z)\Big] (E)&=&\pi^z\left(P_2^{-1}(E)\right)=\mu_g\left( \Phi^{-1} (E)\right)=\mu_g\Big( \Phi^{-1} (E) \cap  ( \bigcup_{n \in \N} K_n ) \Big)  \nonumber \\
                                                  &= & \mu_g\Big( \bigcup_{n \in \N} \Phi^{-1} (E) \cap (K_n \setminus \cup_{1\leq j \leq n-1} K_j)\Big)  \nonumber \\
                                                  &\leq& \sum_{n \in \N} \mu_g(\Phi_n^{-1}(E))=0,  \nonumber 
\end{eqnarray}
where in the last equality we used that $\Phi_n$ are bi-Lipschitz by construction.  We conclude that $\pi^z$ is an inversion plan satisfying \eqref{E:reverse} hence, since $z \in M$ was arbitrary,   $\mu_g$ is a reference measure. 
\end{proof}

\begin{corollary}[$\MCP$ measures on complete Riemannian manifolds]\label{cor:Mg}
Let $(M,g)$ be a (possibly non compact) complete $n$-dimensional smooth Riemannian manifold with Ricci curvature bounded from below:
$$
Ric_g \geq K g, \quad\text{for some } K\in \R.
$$
Let $\mm\in {\mathcal M}^+(M)$ be a positive Radon measure with $\spt(\mm)=M$, such that $(M,\sfd_g,\mm)$ verifies the qualitative $\MCP$ condition \eqref{eq:QualMCP}. Then, called $\mu_g$ the Riemannian volume measure of $(M,g)$, we have
\begin{equation}\label{eq:Riem}
\mm \ll \mu_g.
\end{equation}  
If moreover $\mm$ is a reference measure for $(M,\sfd_g)$ and $\mu_g(M\setminus {\sf Ip}(\mm))=0$, then $\mm \sim \mu_g$.
\end{corollary}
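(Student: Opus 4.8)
The plan is to deduce both conclusions by matching the hypotheses of Theorem \ref{T:metric} and invoking it: once, with ambient space $(M,\sfd_g,\mm)$ and reference measure $\mu_g$, to get \eqref{eq:Riem}; and, under the extra assumptions, a second time with the two measures interchanged to upgrade this to $\mm\sim\mu_g$. The reference measure fed into Theorem \ref{T:metric} is supplied by Proposition \ref{prop:VolRiem}. So the whole proof amounts to checking that, in the present Riemannian setting, every hypothesis of Theorem \ref{T:metric} is literally satisfied.

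First I would record the relevant structural facts about $(M,\sfd_g,\mu_g)$. Since $(M,g)$ is complete, Hopf--Rinow shows that $(M,\sfd_g)$ is a geodesic, proper, complete and separable metric space, and $\mu_g$ is a positive Radon measure with $\spt(\mu_g)=M$ (here connectedness of $M$ is used). The metric space $(M,\sfd_g)$ is non-branching --- two geodesics sharing position and velocity at an interior point coincide --- so the whole of $\Geo$ is a set of non-branching geodesics, whence \emph{every} triple $(M,\sfd_g,\nu)$ is essentially non-branching in the sense of Definition \ref{D:essnonbranch}; in particular $(M,\sfd_g,\mm)$ and $(M,\sfd_g,\mu_g)$ are both essentially non-branching. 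Finally, by the smooth characterization of the measure contraction property recalled in the introduction, the hypothesis $Ric_g\geq K g$ forces $(M,\sfd_g,\mu_g)$ to satisfy $\MCP(K,n)$, hence the qualitative condition \eqref{eq:QualMCP}. By Proposition \ref{prop:VolRiem}, $\mu_g$ is a reference measure for $(M,\sfd_g)$ and, more precisely, $\mathsf{Ip}(\mu_g)=M$.

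Now for the first claim: since $\Omega:=\spt(\mm)=M$ by hypothesis, the data $(\Omega,\sfd_g,\mu_g)=(M,\sfd_g,\mu_g)$ is exactly what Theorem \ref{T:metric} requires of the reference-measure triple (qualitative $\MCP$, essentially non-branching, and $\mu_g$ a reference measure for $(\Omega,\sfd_g)$), while the ambient triple $(M,\sfd_g,\mm)$ is essentially non-branching and verifies \eqref{eq:QualMCP} by assumption, and $\mm(\Omega\setminus\mathsf{Ip}(\mu_g))=\mm(\emptyset)=0$ because $\mathsf{Ip}(\mu_g)=M$. Theorem \ref{T:metric} then yields $\mm\ll\mu_g$, which is \eqref{eq:Riem}. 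For the last assertion I would apply Theorem \ref{T:metric} again with the roles of the measures exchanged: the ambient triple is now $(M,\sfd_g,\mu_g)$, which is essentially non-branching and verifies \eqref{eq:QualMCP} by the previous paragraph; the candidate reference measure is $\mm$, which by assumption is a reference measure for $(\spt(\mu_g),\sfd_g)=(M,\sfd_g)$ and makes $(M,\sfd_g,\mm)$ an essentially non-branching m.m.s. verifying \eqref{eq:QualMCP}; and $\mu_g(M\setminus\mathsf{Ip}(\mm))=0$ is precisely the extra hypothesis. Hence $\mu_g\ll\mm$, and together with $\mm\ll\mu_g$ this gives $\mm\sim\mu_g$. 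I do not expect a genuine obstacle here: the only non-bookkeeping inputs are Proposition \ref{prop:VolRiem}, already proved, and the classical implication $Ric_g\geq K g\Rightarrow\MCP(K,n)$ for smooth manifolds, recalled in the introduction; everything else is verifying that the hypotheses of Theorem \ref{T:metric} hold verbatim.
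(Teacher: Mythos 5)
Your proposal is correct and follows essentially the same route as the paper's own proof: Proposition \ref{prop:VolRiem} supplies the reference measure with $\mathsf{Ip}(\mu_g)=M$, the smooth $\MCP(K,n)$ property and non-branching of $(M,\sfd_g)$ give the remaining hypotheses, and Theorem \ref{T:metric} is applied once for \eqref{eq:Riem} and once with the roles of $\mm$ and $\mu_g$ reversed for the equivalence. Your write-up simply spells out the hypothesis-checking in more detail than the paper does.
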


 \begin{proof}
 By Proposition \ref{prop:VolRiem}, we know that $\mu_g$ is a reference measure with ${\sf Ip}(\mu_g)=M$, moreover it is clear that $(M, \sfd_g, \mu_g)$ is a non-branching metric space satisfying the qualitative $\MCP$ (notice that, more precisely it satisfies the $\MCP(K,n)$ condition). The first claim \eqref{eq:Riem}  is then a direct consequence of Theorem \ref{T:metric}. If $\mm$ is a reference measure,  we can reverse the role of $\mm$ and $\mu_g$ and conclude that, if $\mu_g(M\setminus {\sf Ip}(\mm))=0$, then  $\mu_g \ll \mm$ and  hence $\mm \sim \mu_g$.    
  \end{proof}

\subsection{Example 2: Alexandrov spaces with bounded curvature}
 
\begin{corollary}[$\MCP$ measures on Alexandrov spaces with curv. bounded above and below] \label{C:Alex}
Let $(X,\sfd)$ be an $n$-dimensional geodesically complete Alexandrov space with curvature bounded 
from above and below, and let $\mu:={\mathcal H}^n$ be the $n$-dimensional Hausdorff measure. 
Let $\mm\in {\mathcal M}^+(X)$ be a  positive Radon measure with $\spt(\mm)=X$, such that $(X,\sfd,\mm)$ verifies the qualitative $\MCP$ condition \eqref{eq:QualMCP}. 
Then
\begin{equation}\label{eq:Alex}
\mm \ll \mu. 
\end{equation}  
\end{corollary}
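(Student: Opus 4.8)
The plan is to deduce \eqref{eq:Alex} from Theorem \ref{T:metric}, applied with reference measure $\mu:={\mathcal H}^{n}$ and with $\Omega=\spt(\mm)=X$. Three of the hypotheses of Theorem \ref{T:metric} are essentially immediate: $(X,\sfd)$ is proper, being complete, locally compact and geodesic; an Alexandrov space with curvature bounded below is non-branching, hence both $(X,\sfd,\mm)$ and $(X,\sfd,{\mathcal H}^{n})$ are essentially non-branching; and $(X,\sfd,{\mathcal H}^{n})$ verifies $\MCP((n-1)k,n)$ by Ohta's theorem \cite{Ohta07}, so in particular it satisfies the qualitative $\MCP$ condition \eqref{eq:QualMCP}, whereas $(X,\sfd,\mm)$ satisfies \eqref{eq:QualMCP} by assumption. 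The statement therefore reduces to showing that ${\mathcal H}^{n}$ is a reference measure for $(X,\sfd)$ and that $\mm\big(X\setminus\mathsf{Ip}({\mathcal H}^{n})\big)=0$. I would in fact establish the stronger fact that an inversion plan exists at \emph{every} point, i.e. $\mathsf{Ip}({\mathcal H}^{n})=X$; this makes the condition on $\mm$ automatic, since then $X\setminus\mathsf{Ip}({\mathcal H}^{n})=\emptyset$.

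For the reference-measure claim the key point is that a two-sided curvature bound rules out singularities. Since the curvature is bounded both above and below and $(X,\sfd)$ is geodesically complete, no conical point can occur (compare Remark \ref{rem:Inversion}); every point is regular and every tangent cone is isometric to $\R^{n}$, and moreover, by the structure theory of spaces with two-sided bounded curvature (Berestovskii--Nikolaev), $(X,\sfd)$ is isometric to a Riemannian manifold $(M,g)$ with $\sfd=\sfd_{g}$, with sectional curvature bounded below by $k$ (hence $Ric_{g}\ge (n-1)k\,g$), and with ${\mathcal H}^{n}=\mu_{g}$. The metric tensor $g$ furnished by this result is only of class $C^{1,\alpha}$, so Corollary \ref{cor:Mg} cannot be quoted verbatim; however, the proof of Proposition \ref{prop:VolRiem} carries over unchanged, since it uses only that $\mu_{g}(C(z))=0$, that off the cut locus the minimizing geodesic to $z$ is unique and extendable past $z$, and that the exponential map is bi-Lipschitz on compact subsets of the complement of the cut locus. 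All three facts persist here: the first because $C(z)$ is ${\mathcal H}^{n}$-negligible in any Alexandrov space; the second because curvature bounded above forces local uniqueness of geodesics and geodesic completeness forces their extendability; and the third because the two-sided bound yields both the Lipschitz estimate \eqref{eq:ExpLip} and, via the comparison inequality from above, a matching lower Lipschitz bound. (Alternatively one may avoid the Riemannian structure and simply re-run the construction of Theorem \ref{T:Alexandrov-general}, now with \emph{every} point regular, replacing the compactness used there by a localization argument together with the locality Proposition \ref{P:local} --- legitimate since $(X,\sfd)$ is non-branching --- and exploiting the bi-Lipschitz, rather than merely Lipschitz, behaviour of $\Exp_{p}$ to conclude that the inverting map sends ${\mathcal H}^{n}$-null sets to ${\mathcal H}^{n}$-null sets.) Either way one produces, for every $p\in X$, an inversion plan $\pi^{p}$ as in Definition \ref{D:referencemeasure}, so $\mathsf{Ip}({\mathcal H}^{n})=X$.

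With this established the conclusion is immediate: $(X,\sfd,{\mathcal H}^{n})$ is essentially non-branching, verifies \eqref{eq:QualMCP}, and ${\mathcal H}^{n}$ is a reference measure for $(\spt(\mm),\sfd)=(X,\sfd)$; $(X,\sfd,\mm)$ is essentially non-branching and verifies \eqref{eq:QualMCP}; and $\mm\big(X\setminus\mathsf{Ip}({\mathcal H}^{n})\big)=\mm(\emptyset)=0$. Theorem \ref{T:metric} then yields $\mm\ll{\mathcal H}^{n}$, which is \eqref{eq:Alex}. The main obstacle in this approach is the middle paragraph, namely upgrading the almost-everywhere existence of inversion plans (Theorem \ref{T:Alexandrov-general}) to existence at \emph{all} points: this is exactly where the upper curvature bound enters, through the absence of singular points and the bi-Lipschitz regularity of the exponential map.
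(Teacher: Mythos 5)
Your reduction to Theorem \ref{T:metric} and your identification of the key point --- that the two-sided curvature bound plus geodesic completeness yields a \emph{bi-Lipschitz} geodesic inversion, hence an inversion plan at every point --- are exactly right, and your parenthetical alternative (comparison arguments plus the locality Proposition \ref{P:local}) is essentially the paper's proof. The paper, however, deliberately does \emph{not} take your headline route through the Berestovskii--Nikolaev structure theorem: a Remark immediately following the corollary explains that since the metric tensor produced by Nikolaev is only $C^{1,\alpha}$, the geodesic equation has merely H\"older coefficients, so the ingredients of Proposition \ref{prop:VolRiem} that rest on the smooth geodesic flow (closedness of the sets $\tilde K_n$ via continuous dependence on initial data, and the exponential map being a diffeomorphism off the cut locus) do \emph{not} ``carry over unchanged''; each must be replaced by a comparison argument, at which point the Riemannian detour buys nothing. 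What the paper actually does is purely local: around each point it takes a compact geodesically convex neighborhood $U$ with Lipschitz boundary in which geodesics are unique and extendable to $\partial U$, defines an explicit inversion map $\Phi_z(x)=\Exp_z(-\alpha_z(x)\Exp_z^{-1}(x))$ which is bi-Lipschitz by two-sided triangle comparison, concludes that $\H^n\llcorner U$ is a reference measure for $(U,\sfd)$, and applies Theorem \ref{T:metric} locally to $\H^n\llcorner U$ and $\mm\llcorner U$ --- thereby also sidestepping the need for a global statement $\mathsf{Ip}(\H^n)=X$ and for compactness of $X$. Your version is correct provided you carry out the comparison replacements you sketch (rather than quoting Proposition \ref{prop:VolRiem} verbatim); the paper's local formulation is the cleaner way to package the same idea.
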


\begin{proof}
Thanks to the structural properties of Alexandrov spaces, the proof is not too far from the  Riemannian one above. Indeed it is well known that the curvature bound from below implies non-branching, so that trivially any measure is essentially non-branching.
On the other hand, the geodesic completeness combined with the upper curvature bound ensures that every point $p \in X$ has a neighborhood $U$ with the following properties (see for instance \cite[Sections 9.1.3, 9.1.7]{BBI}):
\begin{itemize}
\item the closure $\bar{U}$ is compact with Lipschitz boundary $\partial U$;
\item for every $x,y \in U$ there exists a unique geodesic $\gamma$ of $X$ joining them which is contained in $U$, i.e. $\gamma(0)=x, \gamma(1)=y, \gamma([0,1])\subset U$; moreover such geodesic is extendable to a minimizing geodesic  until it intersects $\partial U$;
\item every triple of points $xyz \in U$ satisfies the triangle comparison conditions with the two model spaces corresponding to the upper and lower curvature bounds.
\end{itemize}
In particular, since  $(U, \sfd)$ is itself an Alexandrov space with curvature bounded from below, it is non-branching and  its $n$-dimensional Hausdorff measure $\mu \llcorner U$  satisfies the qualitative $\MCP$ condition \eqref{eq:QualMCP} (more precisely it satisfies $\MCP((n-1)K, n)$ where $K\in \R$ stands for the lower curvature bound, see \cite[Proposition 2.8]{Ohta07} which is based on the previous work \cite{KS01}).
\\  In order to apply Theorem \ref{T:metric} it is then enough to show that $\mu \llcorner U$ is a reference measure for  $(U, \sfd)$ in the sense of Definition \ref{D:referencemeasure}. To this aim, for every $z \in U$, define the following geodesic inversion map $\Phi_z:U\to U$ with center $z$
\begin{equation}\label{eq:defPhiR}
\Phi_z(x)=Exp_z\left(- \alpha_z(x) \cdot Exp^{-1}_z(x)\right), \quad \forall x \in U,
\end{equation}
where  $Exp_z$ is the exponential map of the Alexandrov space centered at $z$ and 
$$\alpha_z(x):=\frac{\sup\{t\,:\, Exp_z(-t\, Exp_z^{-1}(x)) \in U \}}{\sup\{t\,:\, Exp_z(t\, Exp_z^{-1}(x)) \in U \} };$$ 
notice that $\alpha:U \to (0, +\infty)$ is Lipschitz  since $U$ is geodesically convex and $\partial U$ is Lipschitz.  Hence, by  comparison with geodesic triangles in the two model spaces corresponding to the upper and lower curvature bounds we get  that  the above geodesic inversion $\Phi_z:U\to U$ with center $z$  is a bi-Lipschitz map. Therefore  $(\Phi_z)_\sharp (\mu \llcorner U) \ll \mu \llcorner U$ and,  setting 
$$\pi^z:= (Id|_U, \Phi_z)_{\sharp} (\mu \llcorner U), $$
 we get that $\mu_g \llcorner U$ is a reference measure for  $(U, \sfd)$. The thesis follows  then by applying Theorem \ref{T:metric} locally to  $\mu_g \llcorner U$ and $\mm\llcorner U$, since $U$ was a suitable neighborhood of an arbitrary point $p \in X$.
\end{proof}

\begin{remark} Let us remark that from the work of Nikolaev \cite{Nikolaev}, it is known that a space with  curvature bounded from above and below  in the Alexandrov sense has the structure of a $C^{3,\alpha}$-manifold with a $C^{1,\alpha}$-metric tensor. But, since the geodesic equations involve first order derivatives of the metric which a priori may not be locally Lipschitz, this remarkable structural result is not so useful to our purpose, that is to show  that there is bi-Lipschitz  geodesic inversion at every point.  This is the reason why we used the argument above using classical comparison, and why we believe that the notion of inversion plan is not trivial also for such spaces.  
\end{remark}

\subsection{Example 3: the Heisenberg group}

Let us start by recalling some basic facts about the $n$-dimensional Heisenberg group $\HH^n$.  
Such space is a non-commutative stratified nilpotent Lie group; as a set it can and will  be identified with its Lie algebra $\R^{2n+1} \simeq \C^{n} \times \R$. 
We denote a point in $\HH^n$ indifferently by $x=(\xi,\eta,t)=[\zeta, t]$, where $\xi=(\xi_1,\ldots,\xi_n), \eta=(\eta_1,\ldots,\eta_n)\in \R^n$, $t \in \R$ 
and $\zeta=\xi+i \eta \in \C^n$. The group law in this system of coordinates is given by 
$$
[\zeta,t] \cdot [\zeta', t']=\left[\zeta+\zeta', t+t'+ 2 \sum_{j=1}^n {\rm Im} \zeta_j \bar{\zeta}_j' \right]. 
$$
The center of the group is 
\begin{equation}\label{eq:defL}
 L:=\{[0,s] \in \HH^n\, : \, s \in \R\};
\end{equation}  
define also $L^*, L^+,L^-$
restricting $s$ to belong to, respectively, $\R\setminus\{0\}, [0,+\infty), (-\infty,0]$. 
The Haar measure of the group is given by the Lebesgue measure $\L^{n+1}$ on  $\HH^n \simeq \R^{2n+1}$. 
The left invariant vector fields
\begin{equation}
 X_j:=\partial_{\xi_j} + 2 \eta_j \partial_t, \quad Y_j:=\partial_{\eta_j}-2 \xi_j \partial_t
 \end{equation}
Lie generate the Lie algebra of the group (which is $\R^{2n+1}$), the only nontrivial bracket relation being  $[X_j,Y_j]=-4Z$, where $Z:=\partial_t$. 
We endow $\HH^n$ with the standard Carnot-Carath\'eodory distance $\sfd_{c}$ defined as follows.  
An absolutely continuous curve $\gamma:[0,T]\to \R^{2n+1}$  is said to be \emph{horizontal} if there exist measurable functions $h_j:[0,T]\to \R$ such that
\begin{equation}\label{eq:horcurve}
 \dot\gamma(s)=\sum_{j=1}^n h_j(s) X_j(\gamma(s))+h_{n+j}(s) Y_j(\gamma(s)).
 \end{equation}
We say that $\gamma$ is sub-unit if moreover $\sum_{j=1}^{2n} h_j(s)^2\leq 1$ for a.e. $s \in [0,T]$. Since $(X_1, \ldots, X_n, Y_1, \ldots, Y_n)$ 
Lie generate the Lie algebra of the group it follows from Chow's Theorem that any couple of points $x, y$ in $\HH^n$ can be joined by a sub-unit curve. Then the Carnot-Carath\'eodory distance $\sfd_c$ between $x$ and $y$ is defined as
\begin{equation}\label{eq:defCCdist}
\sfd_c(x,y):=\inf\{T>0\, : \, \exists \text{ a sub-unit  curve } \gamma:[0,T]\to \HH^n \text{ such that } \gamma(0)=x, \gamma(T)=y \}.
\end{equation}
It follows easily from the left invariance and homogeneity with respect to the dilations of the vector fields $X_j$ and $Y_j$ that $\sfd_c$ is a left invariant and homogeneous distance on $\HH^n$.
We say that a curve $\gamma:[0,T]\to \HH^n$ is a \emph{minimal geodesic with unit speed} if 
$$\sfd_c(\gamma(t), \gamma(t'))=|t-t'|, \quad \forall t, t' \in [0,T].$$
 By the work of Pansu \cite{Pansu} on differentiability of Lipschitz functions  between stratified and nilpotent Lie groups (see also \cite[Theorem 3.2]{AR}), it follows that for any $x,y \in \HH^n$ there exists a sub-unit  (a fortiori unit speed) minimal geodesic joining $x$ to $y$. The equations for the minimal geodesics can be explicitly computed and can be found in the literature (see for instance \cite{Gaveau,Montgo, Monti}), here we follow the presentation of \cite{AR} by recalling just the main results that we will need to prove Corollary \ref{cor:Heisenberg}. 

Set $\bS:=\{a+ib \in \C^n \,:\, |a+ib|=1\}$. For any $a+ib \in \bS, v \in \R$ and $r>0$,  we say that a curve $\gamma:[0,r]\to \HH^n$ is a \emph{curve with parameter $(a+ib,v,r)$}, if $\gamma(s)=(\xi(s),\eta(s),t(s))$, where
\begin{eqnarray}
\xi_j(s):=\frac{b_j\left(1-\cos \frac{vs}{r}\right)+a_j \sin  \frac{vs}{r}}{v} r,& \eta_j:=\frac{-a_j\left(1-\cos  \frac{vs}{r}\right) +b_j \sin  \frac{vs}{r} }{v} r, \; \;  \; t= 2 \frac{\frac{vs}{r} -\sin  \frac{vs}{r} }{v^2} r^2, \;  \;  \;\text{ if } v\neq 0;  \nonumber \\ 
&\xi_j(s):=a_j s,\quad  \eta_j:= b_j  s, \quad t\equiv 0,  \qquad \text{ if } v=0 .  \label{eq:defgamma}
\end{eqnarray}
As the next theorem states (for the proof see for instance  \cite[Theorem 3.4]{AR}), these are the only geodesics in $\HH^n$ starting from the origin.

\begin{theorem}[Geodesics in $\HH^n$]\label{thm:GeoHn}
Sub-unit minimal geodesics starting from the origin $0$ are curves $\gamma$ with parameter $(a+ib, v, r)$, for some  $(a+ib, v, r)\in \bS \times [-2\pi, 2 \pi] \times (0,+\infty)$. In particular, curves $\gamma$ with parameter $(a+ib, v, r)\in \bS \times \R \times (0,+\infty)$ with $|v|> 2 \pi$   are not minimal geodesics. Conversely, any curve $\gamma$ with parameter $(a+ib, v, r)\in \bS \times [-2\pi, 2 \pi] \times (0,+\infty)$ is a sub-unit minimal geodesic starting from $0$. More precisely the following holds:   
\begin{itemize}
\item For any $x=[0,t]\in L^*$ with $t>0$, resp. $t<0$, sub-unit minimal geodesics from $0$ to $x$ form a  family of curves parametrized on $\bS$, each one being obtained from one another by a
rotation around the axis $L$. More precisely this family is composed by all curves $\gamma$ with parameter $(a+ib,2\pi, \sqrt{\pi t})$, resp.  $(a+ib,-2\pi, \sqrt{\pi |t|})$, where $a+ib\in \bS$.

\item  For any $x \in \HH^n\setminus L$ there exists a unique $(a+ib,v,r)\in  \bS \times (-2\pi, 2 \pi) \times (0,+\infty)$  so that the curve $\gamma$ with parameter $(a+ib,v,r)$ joins $0$ to $x$ and in particular $\gamma:[0,r]\to \HH^n $ is the unique sub-unit minimal geodesic from $0$ to $x$. Moreover such $\gamma$ is extendable to  a minimizing geodesic (via  the same formulas \eqref{eq:defgamma}) defined on the maximal interval $[0, 2\pi r/|v|]\supset [0,r]$.
\end{itemize}
\end{theorem}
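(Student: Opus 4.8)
The plan is to read the statement as a classical sub-Riemannian geodesic problem and attack it with the Pontryagin Maximum Principle followed by an isoperimetric identification of the minimizers. First I would observe that the horizontal distribution generated by $X_1,\dots,X_n,Y_1,\dots,Y_n$ is bracket-generating of step two, indeed a contact structure (it has codimension one and $[X_j,Y_j]=-4Z$ spans the complement), so it admits no nonconstant abnormal minimizers; consequently every length-minimizing horizontal curve is the projection of a normal Pontryagin extremal, i.e.\ a solution of Hamilton's equations for the sub-Riemannian Hamiltonian
\[
H(x,p)=\frac12\sum_{j=1}^n\big(\langle p,X_j(x)\rangle^2+\langle p,Y_j(x)\rangle^2\big).
\]
Writing $H$ in the coordinates $(\xi,\eta,t)$ with dual variables $(p_\xi,p_\eta,\tau)$, one notes that $\tau$ is a constant of the motion (since $H$ is $t$-translation invariant); the equations for the horizontal momenta then become linear with frequency proportional to $\tau$, and an elementary integration yields exactly the curves with parameter $(a+ib,v,r)$ of \eqref{eq:defgamma}: here $a+ib\in\bS$ encodes the (unit) initial horizontal velocity, $v$ is $\tau$ up to a universal constant, and $r$ is the length. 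This already gives the first half of the theorem: \emph{every} sub-unit minimal geodesic issuing from $0$ is such a curve, and $|v|\le 2\pi$ must hold for it to be minimal (the reason being the cut-time analysis below).

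The second, and more substantial, step is to decide which of these extremals are genuinely minimizing. For this I would use the isoperimetric interpretation: for a horizontal curve $\gamma=(\xi,\eta,t)$ the $\C^n$-projection $\zeta=\xi+i\eta$ has length equal to that of $\gamma$, and the equation for $t$ in \eqref{eq:horcurve} integrates to express $t(r)$ as a fixed multiple of the symplectic area swept by $\zeta$. Hence minimizing length from $0$ to a point $[0,t]\in L^*$ is the isoperimetric problem ``shortest loop through the origin in $\C^n$ enclosing a prescribed symplectic area'', whose solutions are circles through the origin; such circles form a family parametrized by $\bS$ up to the obvious rotation about the axis $L$, a direct computation matches them with the curves with parameter $(a+ib,\pm2\pi,\sqrt{\pi|t|})$ and gives $\sfd_c(0,[0,t])=2\sqrt{\pi|t|}$. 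For a target point $x\in\HH^n\setminus L$ one gets instead the ``fixed chord and fixed enclosed area'' isoperimetric problem, whose extremals are single circular arcs, and carrying out the arc produces precisely the curves with parameter $(a+ib,v,r)$ with $|v|<2\pi$; moreover an arc of angular opening larger than $2\pi$ is strictly beaten by a suitable arc of opening in $(-2\pi,2\pi]$ subtending the same chord and area, so curves with $|v|>2\pi$ are not minimal.

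Finally I would establish uniqueness and extendability. For $x\notin L$, solving for the circular arc through $0$ and $x$ of the correct length gives a \emph{unique} triple $(a+ib,v,r)\in\bS\times(-2\pi,2\pi)\times(0,\infty)$; equivalently the sub-Riemannian exponential map restricted to this parameter set is a bijection onto $\HH^n\setminus L$, which one checks by writing its inverse explicitly from \eqref{eq:defgamma} (recover $|v|$ from the ratio of $|t|$ to $\sfd_c(0,x)^2$, then $r$ from $\sfd_c$, then $a+ib$). For extendability, note that along such a $\gamma$ the formulas \eqref{eq:defgamma} continue to define a normal extremal for all $s$, and its first return to the axis $L$ occurs at angular parameter $2\pi$, i.e.\ at $s=2\pi r/|v|$; applying the isoperimetric argument on $[0,s_0]$ for every $s_0\le 2\pi r/|v|$ shows $\gamma$ remains minimizing up to that time, which is the asserted extension to $[0,2\pi r/|v|]\supset[0,r]$.

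I expect the main obstacle to be the \emph{sufficiency} part of the second step: upgrading the Pontryagin extremals to global minimizers and pinning the cut time exactly at $|vs/r|=2\pi$. The isoperimetric comparison, which is transparent in the plane, must be made rigorous in $\C^n$ by reducing to a two-plane (a symmetrization/projection argument showing an optimal $\zeta$ is planar), and the ``fixed chord and area'' variant requires a careful discussion of the admissible arcs and of when the long-arc versus short-arc solution is optimal. An alternative is a conjugate-point computation along the explicit extremals together with a ``no cut point before the first conjugate point'' principle, but in the sub-Riemannian setting the latter is itself delicate and, for the Heisenberg group, is usually proved exactly via the isoperimetric picture. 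Throughout, the center $L$ has to be treated separately because of the $\bS$-parameter family of minimizers and the rotational symmetry about $L$, but this is routine bookkeeping rather than a real difficulty.
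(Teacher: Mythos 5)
The paper does not prove this theorem: it is quoted verbatim from Ambrosio--Rigot \cite[Theorem 3.4]{AR}, which in turn rests on the computations of Gaveau and Monti. Your plan --- no nontrivial abnormal extremals for a contact distribution, hence every minimizer is a normal Pontryagin extremal; integrate the Hamiltonian system (with the vertical momentum $\tau$ as the frequency $v/r$) to land on \eqref{eq:defgamma}; then use the Dido isoperimetric picture, via $t=-2\int\sum_j(\xi_j\dot\eta_j-\eta_j\dot\xi_j)$, to identify the minimizers and pin the cut time at angular opening $2\pi$ --- is exactly the standard route taken in that literature, so as a strategy there is nothing to object to.

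Two points, however, need fixing. First, your inversion recipe for the uniqueness claim does not work: with $r=\sfd_c(0,x)$ one has $t/r^2=2(v-\sin v)/v^2$, and the function $v\mapsto (v-\sin v)/v^2$ is \emph{not} injective on $(0,2\pi)$ (it increases up to $v=\pi$ and then decreases, e.g.\ it takes the same value near $v\approx 2.1$ and at $v=3\pi/2$), so $|v|$ cannot be recovered from $|t|/\sfd_c(0,x)^2$. The invariant that does the job is $t/|\zeta|^2=(v-\sin v)/(1-\cos v)$, which is a strictly increasing odd bijection from $(-2\pi,2\pi)$ onto $\R$; from it one recovers $v$, then $r$ from $|\zeta|=2r|\sin(v/2)|/|v|$, and finally $a+ib$ by undoing the rotation. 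This is precisely how injectivity of $\Phi$ on $D$ is proved in \cite{AR, Monti}. (Relatedly, in this normalization $\sfd_c(0,[0,t])=r=\sqrt{\pi|t|}$, not $2\sqrt{\pi|t|}$.) Second, the part you yourself flag as the obstacle --- the reduction of the isoperimetric problem in $\C^n$ to a plane, the ``fixed chord and area'' comparison showing arcs of opening $>2\pi$ are strictly beaten, and the minimality up to the first return to $L$ --- is where essentially all the analytic content of the theorem lives, and it is described rather than carried out. As written the proposal is therefore a correct outline of the standard proof, with one step (the uniqueness inversion) that would fail as stated and with the sufficiency/cut-locus analysis still to be supplied.
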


Consider now the endpoint map $\Phi:\bS \times [-2\pi, 2 \pi]\times (0, +\infty) \to \HH^n, \; (a+ib,v, r)\mapsto (\xi,\eta,t),$ 
which associates to each geodesic $\gamma$ with parameter $(a+ib,v,r)$ its endpoint  $(\xi,\eta,t)$ given by 
\begin{eqnarray}
\xi_j:=\frac{b_j(1-\cos v)+a_j \sin v}{v} r,&&  \eta_j:=\frac{-a_j(1-\cos v)+b_j \sin v}{v} r, \quad  t= 2 \frac{v-\sin v}{v^2} r^2, \qquad \text{ if } v\neq 0;  \nonumber \\ 
&&\xi_j:=a_j r,\quad  \eta_j:= b_j  r, \quad t\equiv 0,  \qquad \text{ if } v=0 .  \label{eq:defPhi}
\end{eqnarray}

Theorem \ref{thm:GeoHn} directly implies that 
\begin{itemize}
\item the range of $\Phi$ is $\HH^n$;
\item $\sfd_c(0, \Phi(a+ib, v, r))=r$, for all  $(a+ib, v, r)\in \bS \times [-2\pi, 2 \pi] \times (0,+\infty)$;
\item the map $\Phi$ is bijective from $D$ to $\HH^n \setminus L$, where
\begin{equation}\label{eq:defD}
D:= \bS \times (-2\pi, 2 \pi) \times (0,+\infty);
\end{equation}
\item the map $\Phi$ is not injective on $\{(a+ib, v, r)\,: \, a+ib \in \bS, |v|=2\pi, r>0\}$ whose image corresponds to the center $L$ of the group. More precisely
\begin{eqnarray}
L^+\setminus\{0\}&=&\{\Phi(a+ib, v, r)\in \HH^n \, : \, a+ib \in \bS, v = 2\pi, r>0 \} \, ; \nonumber \\
L^-\setminus\{0\}&=&\{\Phi(a+ib, v, r)\in \HH^n \, : \, a+ib \in \bS, v =- 2\pi, r>0 \} \, . \nonumber 
\end{eqnarray}
\end{itemize}
Moreover, an explicit computation of the Jacobian of $\Phi$  (see \cite[Page 161]{Monti} or \cite[Propositions 1.7,1.12]{Juillet}) 
shows that it does not vanish on $D$. Therefore we have the next

\begin{proposition}\label{prop:PhiDiffeo}
The map $\Phi$ is a diffeomorphism from $D$ onto $\HH^n\setminus L$.
\end{proposition}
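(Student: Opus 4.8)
The plan is to observe that on the open set $D$ the endpoint map $\Phi$ is a smooth bijection onto $\HH^n\setminus L$ with everywhere invertible differential, and then to conclude, by the inverse function theorem, that $\Phi$ is a diffeomorphism. I would organize the verification around three facts, each essentially already isolated in the discussion preceding the statement: (i) $\Phi|_D$ is smooth; (ii) $\Phi|_D\colon D\to\HH^n\setminus L$ is a bijection; (iii) the Jacobian of $\Phi$ does not vanish on $D$.

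First, for (i): $D=\bS\times(-2\pi,2\pi)\times(0,+\infty)$ is an open subset of the $(2n+1)$-dimensional real-analytic manifold $\bS\times\R\times(0,+\infty)$ (recall $\dim\bS=2n-1$), whereas $\HH^n\setminus L$ is an open subset of $\HH^n\simeq\R^{2n+1}$, so domain and codomain have equal dimension. The components of $\Phi$ in \eqref{eq:defPhi} are assembled from $r$, from $a+ib\in\bS$, and from the three functions $v\mapsto\frac{1-\cos v}{v}$, $v\mapsto\frac{\sin v}{v}$, $v\mapsto\frac{v-\sin v}{v^2}$. I would note that each of these extends to a real-analytic function of $v$ near $0$, with extended values $0$, $1$, $0$ at $v=0$ — exactly the values prescribed by the $v=0$ branch of \eqref{eq:defPhi}; since moreover $r>0$ throughout $D$, there is no further singular locus, and hence $\Phi$ is real-analytic — a fortiori smooth — on all of $D$.

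For (ii) I would simply invoke Theorem \ref{thm:GeoHn}: for each $x\in\HH^n\setminus L$ there is a unique sub-unit minimal geodesic from $0$ to $x$, corresponding to a unique parameter in $D$, so $\Phi$ restricts to a bijection $D\to\HH^n\setminus L$. For (iii) I would cite the explicit Jacobian computation of \cite[p.~161]{Monti} (see also \cite[Propositions~1.7,~1.12]{Juillet}), which shows the Jacobian determinant of $\Phi$ is nonzero at every point of $D$; equivalently $d\Phi_{(a+ib,v,r)}$ is a linear isomorphism for every $(a+ib,v,r)\in D$. Then, by the inverse function theorem, $\Phi$ is a local diffeomorphism onto $\HH^n\setminus L$; being in addition a bijection, its global inverse coincides near each point of $\HH^n\setminus L$ with a smooth local inverse furnished by the inverse function theorem, hence is itself smooth, and therefore $\Phi\colon D\to\HH^n\setminus L$ is a diffeomorphism. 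The only genuinely new point in this argument is the elementary check in (i) that the formulas \eqref{eq:defPhi} glue smoothly across $v=0$; the substantive input — non-degeneracy of the Jacobian — is quoted from the literature, so beyond this bookkeeping I expect no real obstacle.
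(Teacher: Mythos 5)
Your argument is correct and is essentially the paper's own: bijectivity of $\Phi|_D$ onto $\HH^n\setminus L$ from Theorem \ref{thm:GeoHn}, non-vanishing of the Jacobian on $D$ quoted from \cite{Monti} and \cite{Juillet}, and the inverse function theorem to conclude. Your explicit check that the formulas \eqref{eq:defPhi} glue analytically across $v=0$ is a worthwhile piece of bookkeeping that the paper leaves implicit, but it does not change the route.
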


The next lemma of linear algebra will be useful in the sequel.

\begin{lemma}\label{lem:Av}
Let $v\neq 0$ and consider the linear map $L_v:\R^2 \to \R^2$ defined by 
$$(x_1,x_2)\mapsto L_v(x_1,x_2):= (x_1 \, \sin v  + x_2 \, (1-\cos v)  \; , \; - x_1\, (1-\cos v) + x_2 \, \sin v  ).$$
Then the operator $A_v:\R^2 \to \R^2$ given by $A_v:=(L_{-v})^{-1} L_v$ is an orthogonal transformation of $\R^2$.
\end{lemma}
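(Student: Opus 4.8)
The plan is to compute $L_v$ explicitly as a $2\times 2$ matrix, observe that it is a scalar multiple of a rotation matrix, and then conclude that $A_v = (L_{-v})^{-1}L_v$ is itself a rotation (hence orthogonal). First I would write, with respect to the standard basis of $\R^2$,
\[
L_v = \begin{bmatrix} \sin v & 1-\cos v \\ -(1-\cos v) & \sin v \end{bmatrix}.
\]
The key observation is that $\sin^2 v + (1-\cos v)^2 = 2 - 2\cos v = \rho_v^2$, so setting $\rho_v := \sqrt{2-2\cos v} = 2|\sin(v/2)| \ge 0$ we can write $L_v = \rho_v\, R(\theta_v)$ where $R(\theta)$ is the standard rotation matrix by angle $\theta$ and $\theta_v$ is determined by $\cos\theta_v = \sin v / \rho_v$, $\sin\theta_v = -(1-\cos v)/\rho_v$. (Using half-angle identities, $\sin v = 2\sin(v/2)\cos(v/2)$ and $1-\cos v = 2\sin^2(v/2)$, so in fact $\theta_v = -(v/2)\,\mathrm{sgn}(\sin(v/2)) + $ a constant, but this level of detail is not needed.)

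Next I would note that $\rho_{-v} = \rho_v$ since $\cos$ is even, so $L_{-v} = \rho_v R(\theta_{-v})$, and therefore
\[
A_v = (L_{-v})^{-1} L_v = \frac{1}{\rho_v} R(-\theta_{-v}) \cdot \rho_v R(\theta_v) = R(\theta_v - \theta_{-v}),
\]
which is a rotation matrix, hence an orthogonal transformation of $\R^2$. The only point requiring a word of care is that $L_{-v}$ is actually invertible, i.e. $\rho_v \ne 0$: this holds precisely because $v \ne 0$ and $v$ ranges over $(-2\pi, 2\pi)$ in the relevant applications — more precisely $\rho_v = 0$ iff $\cos v = 1$ iff $v \in 2\pi\Z$, and since $v \neq 0$ this is excluded in the range of interest. (If one wants the statement literally for all $v\neq 0$, one restricts to $v \notin 2\pi\Z$, which is the only regime in which $L_v$ is invertible; the cone geodesic formulas only ever use $|v| \le 2\pi$.)

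I do not expect any serious obstacle here: the whole content is the identity $\sin^2 v + (1-\cos v)^2 = 2-2\cos v$, which exhibits $L_v$ as (positive scalar)$\times$(rotation). The mild bookkeeping point is simply to record that $\rho_v = \rho_{-v}$, so that the positive scalars cancel in the product $(L_{-v})^{-1}L_v$ and only a composition of rotations survives. An alternative, even shorter, write-up would be to verify directly that $A_v^T A_v = \mathrm{Id}$ by noting $L_v^T L_v = \rho_v^2\,\mathrm{Id} = L_{-v}^T L_{-v}$, so that $A_v^T A_v = L_v^T (L_{-v}^{-T} L_{-v}^{-1}) L_v = L_v^T (\rho_v^{-2}\,\mathrm{Id}) L_v = \rho_v^{-2} L_v^T L_v = \mathrm{Id}$; this avoids introducing the angle $\theta_v$ altogether and is probably the cleanest route.
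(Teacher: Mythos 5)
Your proposal is correct and essentially matches the paper's proof: the paper verifies $A_v A_v^T=\mathrm{Id}$ using $(L_v)^{-1}=\det(L_v)^{-1}L_v^T$ with $\det(L_v)=2(1-\cos v)=\det(L_{-v})$, which is exactly your ``alternative shorter write-up'' (and your rotation-matrix factorization is just a repackaging of the same conformality of $L_v$). Your remark that invertibility actually requires $v\notin 2\pi\Z$ (not merely $v\neq 0$) is a valid point of care that the paper glosses over, though harmless since the application only uses $v\in(-2\pi,0)\cup(0,2\pi)$.
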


\begin{proof}
First of all observe that $\det(L_v)=\det(L_{-v})=2 (1-\cos v)\neq 0$, since  by assumption $v\neq 0$. Moreover the inverse map is given by 
\begin{equation}\label{eq:defLv}
(L_v)^{-1} = \frac{1}{\det(L_v)} L_v^T,
\end{equation}
where $L_v^T$ is the transpose operator of $L_v$.  We conclude  that
$$
A_v A_v^T =   \frac{1}{\det(L_v)^2}  \left( L_{-v}^T \; L_v \right) \left( L_{-v}^T \; L_v \right)^T=    L_{-v}^{-1} \; L_v \;  L_v^{-1}  \; L_{-v}= Id. 
$$
\end{proof}

Let us consider the following map
\begin{eqnarray}
\Psi: \bS \times \Big((-2\pi,0) \cup (0,2 \pi) \Big) \times (0, +\infty) &\to&  \bS \times \Big((-2\pi,0) \cup (0,2 \pi) \Big) \times (0, +\infty)  \nonumber \\
 (a+ib, v, r) &\mapsto& \left(A_v(a+ib),\;  -\frac{v\, (2\pi-|v|)}{2|v|} \, , \; \frac{2\pi-|v|}{2|v|} \,  r \right), \label{eq:defPsi}
\end{eqnarray}
where $A_v$ was defined in Lemma  \ref{lem:Av}, and $A_v(a+ib)$ is a short notation for the $2n$-vector $(A_v(a_j+ib_j))_{j=1,\ldots,n}$. Notice that $A_v$ maps $\bS$ to $\bS$ thanks to Lemma \ref{lem:Av}.
The next lemma is the key to show that the Heisenberg group $\HH^n$  enters into the framework of Theorem  \ref{T:metric}. 

\begin{lemma}\label{lem:Lambda}
The following hold:
\begin{enumerate}
\item The map $\Psi:  \bS \times \Big((-2\pi,0) \cup (0,2 \pi) \Big) \times (0, +\infty)\; \to  \; \bS \times \Big((-2\pi,0) \cup (0,2 \pi) \Big) \times (0, +\infty)$ 
defined in \eqref{eq:defPsi} is a diffeomorphism. 

\item The map
\begin{equation}\label{eq:defLa}
\Lambda: \HH^n\setminus\Big(L\cup \{[\zeta,t]\in \HH^n\; :\, t=0 \} \Big) \to  \HH^n\setminus\Big(L\cup \{[\zeta,t]\in \HH^n\; :\, t=0 \} \Big), \quad \Lambda:= \Phi \circ\Psi \circ \Phi^{-1}
\end{equation}
is a diffeomorphism. Moreover 
\begin{equation} \label{eq:LaH0}
\sfd_c(x, \Lambda(x))= \sfd_c(x, 0)+ \sfd_c(0, \Lambda(x)), \quad \forall x \in \HH^n\setminus\Big(L\cup \{[\zeta,t]\in \HH^n\; :\, t=0 \} \Big).
\end{equation}
In other words $(x, \Lambda(x))\in H(0)$, for every  $x \in \HH^n\setminus\Big(L\cup \{[\zeta,t]\in \HH^n\; :\, t=0 \} \Big)$,  where $H(0)$ was defined in \eqref{E:couples}.
\end{enumerate}
\end{lemma}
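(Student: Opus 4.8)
The plan is to prove the two assertions of Lemma~\ref{lem:Lambda} in turn, using Theorem~\ref{thm:GeoHn}, Proposition~\ref{prop:PhiDiffeo} and Lemma~\ref{lem:Av}, and then to read off \eqref{eq:LaH0} from the explicit geodesic formulas.

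\textbf{Part (1).} First I would check that $\Psi$ sends the stated domain into $\bS\times\big((-\pi,0)\cup(0,\pi)\big)\times(0,+\infty)$: if $v\in(0,2\pi)$ then $|v|=v$, the second coordinate equals $-(2\pi-v)/2\in(-\pi,0)$ and the third equals $\tfrac{2\pi-v}{2v}\,r>0$; if $v\in(-2\pi,0)$ the analogous computation gives a second coordinate in $(0,\pi)$; and $A_v$ maps $\bS$ to $\bS$ by Lemma~\ref{lem:Av}. Smoothness of $\Psi$ on each of the two connected components of its domain is immediate from \eqref{eq:defPsi} (on each component $|v|$ and $v/|v|$ are smooth, and the entries of $A_v$ depend smoothly on $v$). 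Finally I would exhibit the inverse: from the sign of the second coordinate $v'$ one reads off the sign of $v$ (they are opposite), $v$ is then recovered by inverting the affine map on each half-line (explicitly $v=2\pi+2v'$ if $v'<0$ and $v=2v'-2\pi$ if $v'>0$), after which $r=\tfrac{2|v|}{2\pi-|v|}\,r'$ and $a+ib=A_v^{-1}(a'+ib')=A_v^{T}(a'+ib')$ by orthogonality of $A_v$ (Lemma~\ref{lem:Av}); this inverse is manifestly smooth. Hence $\Psi$ is a diffeomorphism onto its image $\bS\times\big((-\pi,0)\cup(0,\pi)\big)\times(0,+\infty)$.

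\textbf{Part (2), the diffeomorphism.} By Proposition~\ref{prop:PhiDiffeo}, $\Phi$ restricts to a diffeomorphism $D\to\HH^n\setminus L$, and from \eqref{eq:defPhi} the last coordinate $t=2(v-\sin v)v^{-2}r^2$ vanishes exactly when $v=0$ (since $v\mapsto v-\sin v$ is strictly increasing and vanishes only at $0$), with $\operatorname{sign}(t)=\operatorname{sign}(v)$; thus $\Phi$ restricts to a diffeomorphism of $\bS\times\big((-2\pi,0)\cup(0,2\pi)\big)\times(0,+\infty)$ onto $\HH^n\setminus\big(L\cup\{t=0\}\big)$. Consequently $\Phi^{-1}$ carries $\HH^n\setminus\big(L\cup\{t=0\}\big)$ diffeomorphically onto the domain of $\Psi$, $\Psi$ carries that onto its image by Part (1), and $\Phi$ carries the image of $\Psi$ diffeomorphically into $\HH^n\setminus\big(L\cup\{t=0\}\big)$ (indeed into the locus $|v|<\pi$). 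Therefore $\Lambda=\Phi\circ\Psi\circ\Phi^{-1}$ is a well-defined diffeomorphism onto its image.

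\textbf{Part (2), the identity \eqref{eq:LaH0}.} Fix $x\in\HH^n\setminus\big(L\cup\{t=0\}\big)$ and let $(a+ib,v,r)=\Phi^{-1}(x)$, so $\gamma$ with parameter $(a+ib,v,r)$ is the unique minimizing geodesic from $0$ to $x$, of length $r=\sfd_c(0,x)$, with $0<|v|<2\pi$ (Theorem~\ref{thm:GeoHn}). A segment of $\gamma$ of length $\ell$, translated to the origin, has angular parameter of modulus $|v|\ell/r$, so by Theorem~\ref{thm:GeoHn} it is a minimizing geodesic precisely when $|v|\ell/r\le 2\pi$; hence the extension $\gamma|_{[-\rho,r]}$ through $0$ is minimizing exactly for $\rho\le\rho_{\max}:=\tfrac{2\pi-|v|}{|v|}\,r$. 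The point of the definition \eqref{eq:defPsi} is that $\Lambda(x)$ is the point reached by this extension at backward time $r'=\rho_{\max}/2=\tfrac{2\pi-|v|}{2|v|}\,r$: evaluating \eqref{eq:defgamma} at $s=-r'$ (so that $vs/r=v'=-\tfrac{v(2\pi-|v|)}{2|v|}$), comparing with \eqref{eq:defPhi}, and using the orthogonal change of angular variable of Lemma~\ref{lem:Av}, one verifies $\gamma(-r')=\Phi\big(\Psi(\Phi^{-1}(x))\big)=\Lambda(x)$. Since $r'<\rho_{\max}<2\pi r/|v|$, the curve $\gamma|_{[-r',r]}$ is a minimizing geodesic from $\Lambda(x)$ through $0$ to $x$; its piece $\gamma|_{[-r',0]}$ has angular parameter of modulus $\tfrac{2\pi-|v|}{2}<2\pi$, hence $\sfd_c(0,\Lambda(x))=r'$, while its total length is $r'+r$, so $\sfd_c(\Lambda(x),x)=r'+r=\sfd_c(\Lambda(x),0)+\sfd_c(0,x)$, which is \eqref{eq:LaH0}.

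\textbf{Main obstacle.} The delicate step is the middle one of the last paragraph: the explicit computation, from \eqref{eq:defgamma}, \eqref{eq:defPhi} and Lemma~\ref{lem:Av}, showing that $\Phi\circ\Psi\circ\Phi^{-1}$ lands exactly on the backward geodesic extension of $\gamma$ at time $-r'$ — equivalently, tracking how the angular parameter transforms under reversing the geodesic and continuing it through the origin — together with the bookkeeping of the ``angular parameter $\le 2\pi$'' criterion of Theorem~\ref{thm:GeoHn}, which must be checked on both halves $\gamma|_{[-r',0]}$, $\gamma|_{[0,r]}$ and on the whole segment $\gamma|_{[-r',r]}$. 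Everything else is routine bookkeeping with the explicit formulas.
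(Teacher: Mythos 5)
Parts (1) and the diffeomorphism claim of (2) are fine as you present them (and you are right that $\Psi$, hence $\Lambda$, is only a diffeomorphism \emph{onto its image}: the second coordinate of $\Psi$ lands in $(-\pi,0)\cup(0,\pi)$). The fatal problem is exactly the step you single out as the ``main obstacle'' and then assert without carrying out: the identity $\gamma(-r')=\Phi\big(\Psi(\Phi^{-1}(x))\big)=\Lambda(x)$ is \emph{false}, and with it your derivation of \eqref{eq:LaH0}. Since $\cos$ is even and $\sin$ is odd, \eqref{eq:defgamma} gives $\gamma_{a+ib,v,r}(-s)=\gamma_{-(a+ib),-v,r}(s)$, so the backward prolongation of $\gamma$ at time $-r'$ is $\gamma(-r')=\Phi\big(-(a+ib),-vr'/r,\,r'\big)$, whereas by \eqref{eq:defPsi} one has $\Lambda(x)=\Phi\big(A_v(a+ib),-vr'/r,\,r'\big)$. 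Writing $L_v=2\sin(v/2)R_{-v/2}$ (half-angle formulas, $R_\theta$ the rotation by $\theta$) yields $A_v=L_{-v}^{-1}L_v=-R_{-v}$, i.e.\ rotation by $\pi-v$; hence $A_v(a+ib)\neq -(a+ib)$ for every $v\in(-2\pi,0)\cup(0,2\pi)$, and since $\Phi$ is injective on $D$ the two points are distinct. Geometrically, $\Lambda(x)$ sits on the geodesic from $0$ to $x^{-1}=-x$ (initial direction $A_v(a+ib)$), which is \emph{not} the continuation through $0$ of the geodesic from $x$ to $0$ (initial direction $-(a+ib)$; the group inversion is not a sub-Riemannian isometry). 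The concatenation of the unique geodesic $x\to 0$ with the unique geodesic $0\to\Lambda(x)$ then has a corner at $0$ (incoming velocity $(-a,-b,0)$, outgoing $(A_v(a,b),0)$), so it cannot be minimizing and \eqref{eq:LaH0} fails. A concrete check for $n=1$: with $x=\Phi(1,\pi,1)=(0,-2/\pi,2/\pi)$ one gets $r'=1/2$, $\Lambda(x)=(1/\pi,1/\pi,(2-\pi)/\pi^2)$, while the genuine inversion point is $\gamma(-1/2)=(-1/\pi,-1/\pi,(2-\pi)/\pi^2)$; computing the Carnot--Carath\'eodory distance of $x^{-1}\cdot\Lambda(x)$ from the origin gives $\sfd_c(x,\Lambda(x))\approx 1.05<3/2=\sfd_c(x,0)+\sfd_c(0,\Lambda(x))$, whereas $\sfd_c(x,\gamma(-1/2))=3/2$ exactly.

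You should know that the paper's own proof makes the very same identification (its displayed equation asserting $\gamma_{A_v(a+ib),-v,r}\big(\tfrac{2\pi-|v|}{2|v|}r\big)=\gamma_{a+ib,v,r}\big(\tfrac{|v|-2\pi}{2|v|}r\big)$ is the same false equality), so the defect traces back to the definition \eqref{eq:defPsi} rather than to your strategy: your scheme — put $\Lambda(x)$ on the backward extension at half the maximal prolongation time and invoke the total-angle criterion $|v|\ell/r\le 2\pi$ — is the correct one and does prove the (corrected) lemma once $A_v(a+ib)$ is replaced by $-(a+ib)$ in \eqref{eq:defPsi} (and $-\mathrm{Id}$ is orthogonal, so Part (1) survives). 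Two smaller gaps you should still fill in a final write-up: the assertion that a sub-segment of $\gamma_{a+ib,v,r}$, left-translated to the origin, is again a geodesic with angular parameter $|v|\ell/r$ needs justification (it follows from the invariance of the angular speed of the horizontal velocity under left translation together with the uniqueness statement of Theorem \ref{thm:GeoHn}), and the claim that $\gamma|_{[-\rho,r]}$ is minimizing \emph{exactly} for $\rho\le\rho_{\max}$ only needs the ``if'' direction, which is what the criterion gives.
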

 
\begin{proof}
 The first claim directly follows from the formula \eqref{eq:defPsi} and Lemma  \ref{lem:Av}, we pass then to the proof of the second claim. 
 Recalling the definition \eqref{eq:defPhi} of $\Phi$, Theorem \ref{thm:GeoHn} and Proposition \ref{prop:PhiDiffeo}, we have that 
$$
\Phi: \bS \times \Big((-2\pi,0) \cup (0,2 \pi) \Big) \times (0, +\infty)\to   \HH^n\setminus\Big(L\cup \{[\zeta,t]\in \HH^n\; :\, t=0 \} \Big) \quad  \text{is a diffeomorphism.}
$$ 
In particular, for every $x \in \HH^n\setminus\Big(L\cup \{[\zeta,t]\in \HH^n\; :\, t=0 \} \Big)$ we can set
\begin{equation} \label{eq:defa+ib}
(a+ib, v, r) := \Phi^{-1}(x), \quad \text{ with }  (a+ib, v, r) \in \bS \times \Big((-2\pi,0) \cup (0,2 \pi) \Big) \times (0, +\infty).
\end{equation}
Let us denote with $\gamma_{a+ib,v,r}$ the curve with parameter $(a+ib,v,r)$ defined in \eqref{eq:defgamma}. 
By Theorem \ref{thm:GeoHn}, we know that $\gamma_{a+ib,v,r}:[0, 2\pi r /|v|] \to \HH^n$ is  a minimizing geodesic and, 
by construction,  $\gamma_{a+ib,v,r}(0)=0$, $\gamma_{a+ib,v,r}(r)=x$. 
Clearly  the reverse-parametrized curve
$$s \in [0,r] \mapsto \gamma_{a+ib,v,r}(r-s)$$
is a geodesic from $x$ to $0$. Since $x^{-1}=-x$ and  the left translation in $\HH^n$ is an isometry,  we infer that 
$$s \in [0,r] \mapsto -x \cdot  \gamma_{a+ib,v,r}(r-s) $$ 
is a geodesic from $0$ to $-x=x^{-1}$. On the other hand,  from the definition of $A_v$ in Lemma \ref{lem:Av} and  the explicit expression of the endpoint map $\Phi$ given in \eqref{eq:defPhi}, it is readily checked that 
$$ \Phi(A_v(a+ib),-v, r)=-\Phi(a+ib, v, r)=-x.$$
In other words $\gamma_{A_v(a+ib), -v, r}: [0,r] \to \HH^n$ is a geodesic from $0$ to $-x$. 
Since $x \notin L$, the geodesic from $0$ to $-x$ is unique thanks to Theorem \ref{thm:GeoHn}, so in particular we have that 
$$
\gamma_{A_v(a+ib),-v,r}(s)= -x \cdot \gamma_{a+ib,v,r}(r-s), \quad \forall s \in [0, r].
$$
Recalling that actually  $\gamma_{A_v(a+ib),-v,r}$ is length minimizing on the larger interval $[0, 2\pi r /|v|]$,  it follows that we can extend  $s \mapsto  \gamma_{a+ib,v,r}(r-s)$ up to  $[0, 2\pi r /|v|]$ to a length minimizing geodesic.
By observing that $\frac{|v|+2\pi}{2|v|}r \in (r, 2\pi r/ |v|)$, in particular we get
\begin{eqnarray}
 \gamma_{A_v(a+ib),-v,r} \Big(\frac{2\pi-|v|}{2|v|}\, r  \Big)&=& \gamma_{a+ib,v,r}\Big( \frac{|v|-2\pi}{2|v|} \, r\Big)\;  \nonumber \\
\sfd_c \left(x, \gamma_{A_v(a+ib),-v,r} \Big(\frac{2\pi-|v|}{2|v|}\, r  \Big)  \right)&=&   \sfd_c (x,0) 
+  \sfd_c\left(0, \gamma_{A_v(a+ib),-v,r} \Big(\frac{2\pi-|v|}{2|v|}\, r  \Big) \right) \; \label{eq:antipo} .
\end{eqnarray}
The explicit parametrizations given in  \eqref{eq:defgamma} and  \eqref{eq:defPhi}   imply that 
\begin{equation}\label{eq:gaLa}
 \gamma_{A_v(a+ib),-v,r}  \Big(\frac{2\pi-|v|}{2|v|}\, r  \Big) = \Phi \left(A_v(a+ib),\;  -\frac{v\, (2\pi-|v|)}{2|v|} \, , \; \frac{2\pi-|v|}{2|v|} \,  r \right) = \Phi \circ \Psi \circ \Phi^{-1}(x) = \Lambda (x),
\end{equation}
where we used \eqref{eq:defa+ib}, \eqref{eq:defPsi} and \eqref{eq:defLa}. 
The claim \eqref{eq:LaH0} follows then by the combination of  \eqref{eq:antipo}  and \eqref{eq:gaLa}; finally, 
$\Lambda$ is a diffeomorphism since composition of diffeomorphisms.
\end{proof}

We are now ready to prove the main result of this subsection.
 
\begin{corollary}\label{cor:Heisenberg}
Let $(\HH^n, \sfd_c)$ be the $n$-dimensional Heisenberg group endowed with the Carnot-Carath\'eodory distance and let $\mm \in {\mathcal M}^+ (\HH^n)$ be a positive Radon measure with $\spt(\mm)=\HH^n$, such that $(\HH^n, \sfd_c, \mm)$ 
satisfies the qualitative $\MCP$ condition  \eqref{eq:QualMCP}. 
Then $\mm$ is absolutely continuous with respect to the Haar measure on $(\HH^n, \sfd_c)$, 
which coincides with the $(2n+1)$-Lebesgue measure $\L^{2n+1}$ under the identification $\HH^n\simeq \R^{2n+1}$.
\end{corollary}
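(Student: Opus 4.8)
The plan is to deduce the statement from Theorem \ref{T:metric}, applied with reference measure $\mu:=\L^{2n+1}$ and $\Omega:=\spt(\mm)=\HH^n$. Four hypotheses must be checked. That $(\HH^n,\sfd_c,\mm)$ verifies the qualitative $\MCP$ is assumed; that $(\HH^n,\sfd_c,\L^{2n+1})$ does is a consequence of Juillet's theorem that the Haar measure satisfies $\MCP(0,2n+3)$ (see \cite{Juillet}), which a fortiori gives the qualitative $\MCP$ of Definition \ref{D:qualitative}. As for essential non-branching, $(\HH^n,\sfd_c)$ is an essentially non-branching geodesic space (see the discussion in the Introduction, cf.\ \cite{Juillet}), and this is a property of the geodesic space alone, so that both $(\HH^n,\sfd_c,\mm)$ and $(\HH^n,\sfd_c,\L^{2n+1})$ are essentially non-branching. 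The only point that genuinely requires an argument is therefore that $\L^{2n+1}$ is a reference measure for $(\HH^n,\sfd_c)$; we will in fact prove the stronger fact that $\mathsf{Ip}(\L^{2n+1})=\HH^n$, so that the last hypothesis $\mm(\HH^n\setminus\mathsf{Ip}(\L^{2n+1}))=0$ holds trivially.

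To produce an inversion plan at an arbitrary $z\in\HH^n$ we first reduce to $z=0$ by left-invariance: denoting by $\tau_z$ the left translation by $z$, which is an isometry of $(\HH^n,\sfd_c)$ ($\sfd_c$ being left-invariant) and preserves $\L^{2n+1}$ (being the Haar measure), and using that $\tau_z(0)=z$ forces $(\tau_z,\tau_z)(H(0))=H(z)$, one checks that whenever $\pi^0$ is an inversion plan at $0$ the push-forward $(\tau_z,\tau_z)_\sharp\pi^0$ is an inversion plan at $z$. It thus suffices to invert at the origin. Set $N:=L\cup\{[\zeta,t]\in\HH^n:t=0\}$; since $L$ is a line and $\{t=0\}$ a hyperplane of $\R^{2n+1}$, we have $\L^{2n+1}(N)=0$, hence $\L^{2n+1}\llcorner_{\HH^n\setminus N}=\L^{2n+1}$. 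Let $\Lambda:\HH^n\setminus N\to\HH^n\setminus N$ be the geodesic inversion map through $0$ furnished by Lemma \ref{lem:Lambda} (a diffeomorphism, by that lemma), and define
\[
\pi^0:=(\id,\Lambda)_\sharp\bigl(\L^{2n+1}\llcorner_{\HH^n\setminus N}\bigr).
\]
Then $(P_1)_\sharp\pi^0=\L^{2n+1}=\mu$; by \eqref{eq:LaH0} one has $(x,\Lambda(x))\in H(0)$ for $\L^{2n+1}$-a.e.\ $x$, so $\pi^0\bigl(\HH^n\times\HH^n\setminus H(0)\bigr)=0$; and, $\Lambda$ being a diffeomorphism of the open set $\HH^n\setminus N$, the push-forward $(P_2)_\sharp\pi^0=\Lambda_\sharp\bigl(\L^{2n+1}\llcorner_{\HH^n\setminus N}\bigr)$ has density $|\det D\Lambda^{-1}|$ with respect to $\L^{2n+1}$, hence is absolutely continuous. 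Thus $\pi^0$ satisfies \eqref{E:reverse}, so $0\in\mathsf{Ip}(\L^{2n+1})$, and by the translation argument $\mathsf{Ip}(\L^{2n+1})=\HH^n$; in particular $\L^{2n+1}$ is a reference measure for $(\HH^n,\sfd_c)$.

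All hypotheses of Theorem \ref{T:metric} being in place, we conclude $\mm\ll\L^{2n+1}$, which is the assertion since the Haar measure of $\HH^n$ is $\L^{2n+1}$ under the identification $\HH^n\simeq\R^{2n+1}$. The genuinely nontrivial input is Lemma \ref{lem:Lambda} (resting in turn on the explicit description of $\HH^n$-geodesics in Theorem \ref{thm:GeoHn} and the linear-algebra Lemma \ref{lem:Av}), which supplies a diffeomorphic geodesic inversion through $0$ defined off the negligible set $N$; once this is available, the reduction to an arbitrary base point via left-invariance and the verification of the remaining hypotheses of Theorem \ref{T:metric} are routine, the only care being to record that $N$ is $\L^{2n+1}$-negligible and that a diffeomorphism pushes Lebesgue measure forward to an absolutely continuous measure.
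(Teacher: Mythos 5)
Your proposal is correct and follows essentially the same route as the paper: reduce to the origin by left-invariance, use the diffeomorphic geodesic inversion $\Lambda$ of Lemma \ref{lem:Lambda} off the negligible set $L\cup\{t=0\}$ to build $\pi^0=(\id,\Lambda)_\sharp\L^{2n+1}$, verify \eqref{E:reverse}, and invoke Theorem \ref{T:metric} together with Juillet's $\MCP$ result and the (non-)branching structure of $(\HH^n,\sfd_c)$. The only cosmetic difference is that the paper records the stronger fact that $(\HH^n,\sfd_c)$ is non-branching (hence essentially non-branching for any measure), whereas you phrase this directly in terms of essential non-branching; the substance is identical.
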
 

\begin{proof}
First of all it is well known (for example it follows from Theorem \ref{thm:GeoHn})  that $(\HH^n, \sfd_c)$ is a non-branching geodesic metric space, moreover   the Heisenberg group $(\HH^n, \sfd_c)$  endowed with the Haar measure   $\L^{2n+1}$
satisfies the qualitative $\MCP$ condition  \eqref{eq:QualMCP} (more precisely it satisfies $\MCP(0,N)$ if and only if $N\geq 2n+3$ by the work of Juillet \cite{Juillet}). 
In order to apply Theorem \ref{T:metric} it is then 
enough to show that $\L^{2n+1}$ is a reference measure for $(\HH^n, \sfd_c)$. 
\\To this aim, observe since  that the map 
$$
\Lambda: \HH^n\setminus\Big(L\cup \{[\zeta,t]\in \HH^n\; :\, t=0 \} \Big) \to  \HH^n\setminus\Big(L\cup \{[\zeta,t]\in \HH^n\; :\, t=0 \} \Big)
$$
defined in  \eqref{eq:defLa} is a diffeomorphism, and since 
$$
\L^{2n+1}\left(L\cup \{[\zeta,t]\in \HH^n\; :\, t=0 \}\right)=0,  
$$
then 
\begin{equation}\label{eq:Lll}
\Lambda_\sharp (\L^{2n+1}) \ll  \L^{2n+1}.
\end{equation}
Defining then $\pi^0:=(\L^{2n+1}, \Lambda_\sharp (\L^{2n+1}) )$, we get that $\pi^0$ is concentrated on the graph of $\Lambda$, which is contained in $H(0)$ thanks to  
Lemma \ref{lem:Lambda}. 
Therefore $\pi^0(X\times X \setminus H(0))=0$ and \eqref{E:reverse} holds at $0$. 
The construction of $\pi^x$ for $x \in \HH^n$, satisfying  \eqref{E:reverse}, can be reduced to the one of $\pi^0$ just by conjugation with $x$. We conclude that ${\sf Ip}(\L^{2n+1})=\HH^n$ and the thesis follows directly by Theorem \ref{T:metric}.
\end{proof}


\end{document}